\newtheorem{theorem}{Theorem}[section]
\newtheorem{lemma}[theorem]{Lemma}
\newtheorem{proposition}[theorem]{Proposition}
\newtheorem{corollary}[theorem]{Corollary}
\newtheorem{question}[theorem]{Question}
\newtheorem{letterthm}{Theorem}
\newtheorem{lettercor}[letterthm]{Corollary}
\newtheorem*{rep@theorem}{\rep@title}
\newcommand{\newreptheorem}[2]{%
\newenvironment{rep#1}[1]{%
 \def\rep@title{#2 \ref{##1}}%
 \begin{rep@theorem}}%
 {\end{rep@theorem}}}
\theoremstyle{definition}
\newtheorem{definition}[theorem]{Definition}
\theoremstyle{remark}
\newtheorem{remark}[theorem]{Remark}
\newcommand{\Proj}{\mathbb{P}}
\newcommand{\R}{\mathbb{R}}
\newcommand{\ess}{\mathbb{S}}
\newcommand{\Z}{\mathbb{Z}}
\newcommand{\uu}{{\underline{u}}}
\newcommand{\vv}{\underline{v}}
\newcommand{\w}{{\underline{w}}}
\newcommand{\x}{\underline{x}}
\newcommand{\Wh}{\mathrm{Wh}}
\newcommand{\St}{\mathrm{St}}
\newcommand{\wh}{\widehat}
\newcommand{\id}{\mathrm{id}}
\newcommand{\Stab}{\mathrm{Stab}}
\newcommand{\cone}{C_\curlyP}
\newcommand{\curlyA}{\mathcal{A}}
\newcommand{\curlyB}{\mathcal{B}}
\newcommand{\curlyG}{\mathcal{G}}
\newcommand{\curlyH}{\mathcal{H}}
\newcommand{\curlyP}{\mathcal{P}}
\newcommand{\curlyS}{\mathcal{S}}
\newcommand{\curlyU}{\mathcal{U}}
\newcommand{\curlyW}{\mathcal{W}}
\newcommand{\curlyX}{\mathcal{X}}
\newcommand{\into}{\hookrightarrow}
\newcommand{\immerses}{\looparrowright}
\title{Essential surfaces in graph pairs}
\author{Henry Wilton\footnote{Supported by EPSRC Standard Grant EP/L026481/1.}}
\newcommand{\Addresses}{{% additional braces for segregating \footnotesize
  \bigskip
  \footnotesize

  \textsc{DPMMS, Centre for Mathematical Sciences, Wilberforce Road, Cambridge, CB3 0WB, UK}\par\nopagebreak
  \textit{E-mail address:} \texttt{h.wilton@maths.cam.ac.uk}

}}
\begin{document}

\maketitle

\begin{abstract}
A well known question of Gromov asks whether every one-ended hyperbolic group $\Gamma$ has a surface subgroup.  We give a positive answer when $\Gamma$ is the fundamental group of a graph of free groups with cyclic edge groups.  As a result, Gromov's question is reduced  (modulo a technical assumption on 2-torsion) to the case when $\Gamma$ is rigid.  We also find surface subgroups in limit groups. It follows that a limit group with the same profinite completion as a free group must in fact be free, which answers a question of Remeslennikov in this case. 
\end{abstract}

This paper addresses a well known question about hyperbolic groups, usually attributed to Gromov.

\begin{question}\label{qu: Gromov}
Does every one-ended hyperbolic group contain a surface subgroup?
\end{question}

Here, a \emph{surface subgroup} is a subgroup isomorphic to the fundamental group of a closed surface of non-positive Euler characteristic.   Various motivations for Gromov's question can be given. It generalizes the famous Surface Subgroup conjecture for hyperbolic 3-manifolds, but it is also a natural challenge when one considers that the Ping-Pong lemma makes free subgroups very easy to construct in hyperbolic groups, whereas a theorem of Gromov--Sela--Delzant \cite{delzant_limage_1995} asserts that a one-ended group has at most finitely many images (up to conjugacy) in a hyperbolic group.  More recently, Markovic proposed finding surface subgroups as a route to proving the Cannon conjecture \cite{markovic_criterion_2013}.

Several important cases of Gromov's question have recently been resolved.  Most famously, Kahn and Markovic proved the Surface Subgroup conjecture \cite{kahn_immersing_2012}.  Extending their work has been the topic of a great deal of recent research (see \cite{hamenstaedt_incompressible_2015,liu_homology_2015}, for instance). In another dramatic development, Calegari and Walker answered Gromov's question affirmatively for random groups \cite{calegari_random_2015}, following similar results for random ascending HNN extensions of free groups (by the same authors \cite{calegari_surface_2015}) and random graphs of free groups with edge groups of rank at least two (by Calegari and the author \cite{calegari_random_2013a}).

In this paper, we resolve Gromov's question for a contrasting class of hyperbolic groups -- graphs of free groups with cyclic edge groups.  Our main theorem answers Gromov's question affirmatively in this case.

\begin{letterthm}\label{thm: GOFGWCEG}
Let $\Gamma$ be the fundamental group of a graph of free groups with cyclic edge groups.  If $\Gamma$ is one-ended and hyperbolic then $\Gamma$ contains a quasiconvex surface subgroup.
\end{letterthm}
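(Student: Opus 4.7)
The plan is to reformulate Theorem~\ref{thm: GOFGWCEG} in the language of \emph{graph pairs} $(G, W)$, meaning a finite graph $G$ together with a finite collection $W$ of immersed cycles. A graph of free groups with cyclic edge groups $\Gamma$ is the fundamental group of the two-complex obtained by attaching an annulus along each pair of loops in $W$ identified by an edge group. The theorem then reduces to finding an \emph{essential surface} in the pair: a $\pi_1$-injective map from a compact hyperbolic surface-with-boundary into the two-complex whose boundary components wrap around the loops of $W$. Producing such a surface automatically yields a quasiconvex surface subgroup of $\Gamma$, because the surface subgroup acts on a convex sub-tree of the Bass--Serre tree with hyperbolic vertex stabilisers.

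Next, I would translate the hypotheses on $\Gamma$ into combinatorial conditions on $(G, W)$. One-endedness should correspond to connectivity, with no cut vertex, of each Whitehead graph $\Wh(W, v)$ at a vertex $v \in G$; this is a standard consequence of Stallings--Dunwoody accessibility. Hyperbolicity, which forbids Baumslag--Solitar subgroups, should correspond to the non-existence of an essential annular diagram in the pair --- equivalently, that no two distinct conjugates of $W$-loops share a nontrivial common power. Together, these conditions furnish cycles through every vertex of every Whitehead graph and rule out the ``flat'' configurations that would obstruct hyperbolic surfaces.

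The core of the argument would be an explicit construction assembling polygonal disks into a closed surface. Each disk corresponds to a segment of a $W$-loop, and adjacent disks are glued at corners along turns recorded in the Whitehead graphs. The Euler characteristic of the assembled surface is a combinatorial expression in the numbers of disks, corners, and edges, and can be made strictly negative once non-trivial cycles are available. I would attempt to carry out the construction either by a careful induction on the complexity of the pair, peeling off vertex neighbourhoods whose Whitehead graphs contain short cycles, or by an asymptotic counting argument in the spirit of Calegari--Walker, producing essential surfaces from large powers of the $W$-loops.

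The hard part will be $\pi_1$-injectivity: a surface produced by naive gluing can fold along edges, factor through a non-injective branched cover, or have boundary that is not a genuine power of a $W$-loop. I expect the resolution to use a Stallings-style folding procedure on the candidate surface, combined with a local small-cancellation condition on the gluings that is forced by the hyperbolicity assumption through the absence of annular diagrams. Once essentiality is secured, quasiconvexity should follow by observing that the surface's Bass--Serre tree embeds as a convex subtree of that of $\Gamma$ and applying a standard combination theorem, e.g.\ Bestvina--Feighn.
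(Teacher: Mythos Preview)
Your outline is a research plan rather than a proof, and it is missing the two central mechanisms that actually make the argument work. First, the construction of an essential surface in an irreducible graph pair does not proceed by induction on complexity or by Calegari--Walker style counting; hyperbolicity plays no role in it whatsoever. The paper encodes admissible $\partial$-immersions of locally irreducible pairs into $(\Gamma,\w)$ as the integer points of a rational polyhedral cone $C_{\mathcal{P}}$, and defines a projective rank function $\rho_{\mathcal{P}}=\chi_-/n$ as a quotient of two linear functionals. Since $\rho_{\mathcal{P}}$ attains its maximum at a vertex of the projectivized polytope, the maximum is realized by an actual admissible $\partial$-immersion. One then applies the relative JSJ decomposition, together with the earlier result that rigid pairs admit finite covers in which a peripheral component can be deleted while remaining irreducible, to show that a maximal $\partial$-immersion can have no rigid JSJ pieces and hence must be of weak surface type. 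Your proposed ``polygonal disk'' assembly, folding, and small-cancellation ideas do not touch this linear-programming/JSJ mechanism, and historically every attempt to build such surfaces directly by local gluing ran into exactly the obstructions you flag but do not resolve.

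Second, your route from a surface-with-boundary to a closed quasiconvex surface subgroup is vague where the paper is concrete. Once each vertex pair carries an admissible essential surface, one glues these along the cyclic vertices to obtain a graph of surfaces; after passing to a finite cover so the edge maps are homeomorphisms, this space thickens to a compact $3$-manifold $M$, and a boundary component $\Sigma$ of non-positive Euler characteristic gives the surface subgroup. Injectivity of $\pi_1\Sigma$ is supplied by Dehn's lemma (using one-endedness), not by small-cancellation, and quasiconvexity comes from local quasiconvexity of $\Gamma$. Your Bass--Serre/Bestvina--Feighn sketch for quasiconvexity is plausible but redundant; your suggestion that hyperbolicity forces a small-cancellation condition strong enough to guarantee $\pi_1$-injectivity of a naively glued surface is not substantiated and is not how the argument goes.
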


In fact, using a result of Wise, we are able to find surface subgroups in graphs of \emph{virtually} free groups with \emph{virtually} cyclic edge groups; see Theorem \ref{thm: Surface subgroups} below.

Numerous special cases of this result are already known.  Calegari used his work on the rationality of stable commutator length in free groups \cite{calegari_stable_2009} to show that surface subgroups exist when $H_2(\Gamma;\mathbb{Q})\neq 0$ \cite{calegari_surface_2008}.  Infinite classes of examples were found by the author in joint works with Gordon \cite{gordon_surface_2010} and with Kim \cite{kim_polygonal_2012}.  Kim and Oum found surface subgroups in doubles of free groups of rank two \cite{kim_hyperbolic_2014}.  The author answered a weaker version of Gromov's question for this class of groups, by showing that every such $\Gamma$ is either a surface group or contains a finitely generated, one-ended subgroup of infinite index \cite{wilton_one-ended_2011}.    
  
Although the class of hyperbolic groups covered by Theorem \ref{thm: GOFGWCEG} is quite specific, the theorem has wider consequences for Gromov's question.  We call a group \emph{rigid} if it does not admit a non-trivial splitting with a virtually cyclic edge group.    Using strong accessibility \cite{louder_strong_2017} we can, modulo a technical hypothesis on 2-torsion, reduce Gromov's question to  the rigid case, using the following corollary.

\begin{lettercor}\label{lettercor: Surface or rigid}
Let $\Gamma$ be a one-ended hyperbolic group without 2-torsion.  Either $\Gamma$ contains a quasiconvex surface subgroup, or $\Gamma$ contains a quasiconvex rigid subgroup.
\end{lettercor}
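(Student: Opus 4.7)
The strategy is to combine Theorem \ref{thm: GOFGWCEG} (in its virtually-free generalization Theorem \ref{thm: Surface subgroups}) with the strong accessibility theorem of Louder--Touikan \cite{louder_strong_2017}, whose applicability to $\Gamma$ accounts for the no-2-torsion hypothesis. The plan is to apply strong accessibility to obtain a finite hierarchy: take the JSJ decomposition of $\Gamma$ over virtually cyclic subgroups; then take the relative JSJ of each one-ended non-MHF vertex group; and iterate. The theorem guarantees that this process terminates, and that every non-MHF one-ended vertex group appearing at the terminal level is rigid in the sense of this corollary. Every vertex group appearing at every level of the hierarchy is quasiconvex in $\Gamma$, since the edge groups are virtually cyclic (hence quasiconvex) and quasiconvexity passes down JSJ decompositions of hyperbolic groups.

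I would then case-split on the vertex groups produced by the hierarchy. If a maximal hanging Fuchsian (MHF) vertex group occurs at any level, it is the fundamental group of a compact hyperbolic $2$-orbifold and so contains a closed surface subgroup of finite index; this is the required quasiconvex surface subgroup of $\Gamma$. Otherwise, if any one-ended vertex group appears at the terminal level, strong accessibility forces it to be rigid, supplying a quasiconvex rigid subgroup and finishing the proof.

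The remaining case is that no MHF vertices occur and the terminal level contains no one-ended vertex groups at all, so every terminal vertex is virtually free or virtually cyclic. In this case, I would consider a one-ended vertex group $H$ at the deepest level where such vertices still appear (this exists because $\Gamma$ itself is one-ended). Under the standing assumptions, the JSJ of $H$ at the next level expresses $H$ as the fundamental group of a graph of virtually free groups with virtually cyclic edge groups; since $H$ is one-ended, hyperbolic, and quasiconvex in $\Gamma$, Theorem \ref{thm: Surface subgroups} supplies a quasiconvex surface subgroup of $H$, and hence of $\Gamma$. I expect the main obstacle to be pinning down the exact output of strong accessibility---in particular, identifying precisely which vertex groups of the JSJ of $H$ survive once MHF and rigid types have been excluded, so that $H$ really does split as a graph of virtually free groups with virtually cyclic edge groups, rather than some larger class of pieces to which Theorem \ref{thm: Surface subgroups} would not directly apply.
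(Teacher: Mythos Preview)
Your overall strategy---strong accessibility from \cite{louder_strong_2017} to produce a finite virtually cyclic hierarchy, then Theorem \ref{thm: Surface subgroups} applied to a deepest one-ended piece---is exactly the paper's route, and your remarks on quasiconvexity are fine.

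There is, however, a genuine error in your first case. A maximal \emph{hanging} Fuchsian vertex group is, by definition, the fundamental group of a compact hyperbolic $2$-orbifold \emph{with nonempty boundary}, the boundary components carrying the incident edge groups. Such a group is virtually free, and a virtually free group contains no closed surface subgroup. So the shortcut ``an MHF vertex gives a surface subgroup'' is simply false, and as written your case-split is incomplete: you have not handled the situation where MHF vertices occur but no rigid one-ended terminal vertex does.

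The repair is immediate once you notice this: since MHF vertices are virtually free, they should be absorbed into your third case rather than singled out. Take a one-ended vertex group $H$ in the hierarchy with no one-ended vertex groups below it (this exists since $\Gamma$ is one-ended). If $H$ does not split over a virtually cyclic subgroup, it is rigid and you are done. Otherwise every vertex group of its splitting has more than one end or is finite, hence is virtually free; thus $H$ is a graph of virtually free groups with virtually cyclic edge groups, and Theorem \ref{thm: Surface subgroups} applies. This is precisely the observation the paper records in the remark preceding Corollary \ref{cor: Surface or rigid}; with that correction your argument coincides with the paper's.
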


See Corollary \ref{cor: Surface or rigid} for full details.  By a theorem of Bowditch \cite{bowditch_cut_1998}, a one-ended hyperbolic group $\Gamma$ is rigid if and only if its Gromov boundary does not contain local cut points (unless $\Gamma$ is a finite extension of a triangle group).  Corollary \ref{lettercor: Surface or rigid} should be useful in any attempt at a general answer to Gromov's question, since local cut points in the boundary present extra technical challenges for the ergodic techniques of \cite{kahn_immersing_2012} and the probabilistic techniques of \cite{calegari_random_2015}, as witnessed by the difficulties resolved in Kahn and Markovic's proof of the Ehrenpreis conjecture \cite{kahn_good_2015}.

A \emph{limit group} is a finitely generated, fully residually free group -- that is, a finitely generated group in which every finite subset can be mapped injectively into a free group by a group homomorphism.  Limit groups play a central role in the study of algebraic geometry and logic over free groups; see \cite{sela_diophantine_2001} \emph{et seq.},\ in which they were defined, and also the parallel project \cite{kharlampovich_tarskis_1998} \emph{et seq.}  Theorem \ref{thm: GOFGWCEG} addresses the key case for the problem of finding surface subgroups of limit groups, and so we can also answer Gromov's question in that context.

\begin{lettercor}\label{lettercor: Limit groups}
Let $\Gamma$ be a limit group.  If $\Gamma$ is one-ended then $\Gamma$ contains a surface subgroup.
\end{lettercor}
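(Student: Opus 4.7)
The plan is to reduce to Theorem~\ref{thm: GOFGWCEG}. First, if $\Gamma$ contains a subgroup isomorphic to $\Z^2$, then we are immediately done, because $\Z^2$ is itself a surface subgroup in the sense adopted here: it is the fundamental group of the torus, a closed surface of Euler characteristic zero. So we may assume $\Gamma$ contains no copy of $\Z^2$. A theorem of Sela then gives that $\Gamma$ is hyperbolic, so from now on we may assume that $\Gamma$ is one-ended and hyperbolic.

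It therefore suffices to realize $\Gamma$ as the fundamental group of a graph of free groups with cyclic edge groups, since Theorem~\ref{thm: GOFGWCEG} then applies directly. For this I would invoke the hierarchical structure theorem for limit groups due to Sela, and independently to Kharlampovich--Myasnikov and to Champetier--Guirardel: every one-ended limit group is either free abelian, a closed surface group, or admits a non-trivial abelian JSJ decomposition whose vertex groups are abelian, surface, or rigid limit groups of strictly smaller hierarchical complexity. Under the hyperbolicity assumption the only abelian groups that appear are infinite cyclic, and iterating the JSJ until the hierarchy terminates at free and surface pieces presents $\Gamma$ as the fundamental group of a finite graph of groups with infinite cyclic edge groups and vertex groups that are either free or closed hyperbolic surface groups. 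Since each closed hyperbolic surface group itself splits as a graph of free groups with cyclic edge groups --- cut the surface into pairs of pants along a maximal disjoint family of essential simple closed curves --- refining this decomposition exhibits $\Gamma$ as the fundamental group of a graph of free groups with cyclic edge groups.

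The main conceptual obstacle in this plan is verifying that Sela's hierarchy for hyperbolic limit groups terminates at free and surface pieces; that is, that the rigid vertex groups appearing in the cyclic JSJ, when continued to be decomposed hierarchically, eventually reduce to free groups. This is a substantial consequence of Sela's shortening argument and the construction of the Makanin--Razborov diagram, and is closely related to the strong accessibility input used in Corollary~\ref{lettercor: Surface or rigid}. With that input in hand, the reduction to Theorem~\ref{thm: GOFGWCEG} is routine, and in the hyperbolic case one in fact obtains a quasiconvex surface subgroup.
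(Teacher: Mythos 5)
Your opening reduction — if $\Gamma$ contains $\Z^2$ you are done because the torus is a closed surface of Euler characteristic $0$, and otherwise Sela's theorem gives hyperbolicity — is correct and matches the paper. But the second half has a genuine gap. You try to show that $\Gamma$ \emph{itself} is the fundamental group of a graph of free groups with cyclic edge groups, by ``iterating the JSJ'' and ``refining'' the decomposition. This refinement step does not go through. A rigid vertex group $R$ in the cyclic JSJ of $\Gamma$ is rigid precisely in the sense that it admits no nontrivial cyclic splitting \emph{relative to its peripheral structure} (the incident edge groups). If you instead pass to the absolute cyclic JSJ of $R$, there is no reason for the edge groups of $\Gamma$'s JSJ that are incident to $R$ to be elliptic in it, so you cannot substitute $R$'s decomposition into $\Gamma$'s to obtain a single graph-of-groups decomposition of $\Gamma$ with cyclic edge groups and free/surface vertex groups. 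Sela's and Louder--Touikan's strong accessibility guarantees that the hierarchy of \emph{subgroups} terminates, but it does not flatten into one decomposition of $\Gamma$, and in general a one-ended hyperbolic limit group need not lie in the class covered by Theorem~\ref{thm: GOFGWCEG}.

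The paper's route avoids this. It does not decompose $\Gamma$ all at once; instead it descends the hierarchy to the \emph{bottom-most one-ended subgroup} $H$, one with no one-ended subgroups below it in the hierarchy. For such an $H$, all vertex groups of its virtually cyclic splitting have pieces further down that are never one-ended, and accessibility over finite groups then forces them to be virtually free. So $H$ (not $\Gamma$) is a graph of virtually free groups with virtually cyclic edge groups, and it is one-ended and hyperbolic because $\Gamma$ is. Since non-abelian limit groups are never rigid (Sela) and $H$ cannot be abelian without containing $\Z^2$, $H$ is not rigid, so the dichotomy in the paper's Remark puts $H$ in the graph-of-free-groups case. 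Applying Theorem~\ref{thm: Surface subgroups} to $H$ gives a surface subgroup of $H \leq \Gamma$, and hence of $\Gamma$. The moral difference: you do not need $\Gamma$ to be a graph of free groups; you only need some one-ended subgroup of it to be, and strong accessibility hands you exactly such a subgroup.
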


See Corollary \ref{cor: Surface subgroups of limit groups} for details. Note that limit groups are not all hyperbolic, but they are all toral relatively hyperbolic \cite{alibegovic_combination_2005,dahmani_combination_2003}.  In particular, non-hyperbolic limit groups contain a $\Z^2$ subgroup, so the hyperbolic case is the one of interest. 

These results have interesting applications to a different structural problem in group theory.   Recall that the \emph{profinite completion}, $\wh{\Gamma}$, of a group $\Gamma$ is the closure of the image of $\Gamma$ in the direct product of its finite quotients (endowed with the product topology).   If two groups $\Gamma_1$ and $\Gamma_2$ have isomorphic profinite completions, then it is natural to ask whether $\Gamma_1$ and $\Gamma_2$ must be isomorphic. 

In general, the anwer is `no'.  There are even examples of non-isomorphic pairs of virtually cyclic groups with isomorphic profinite completions \cite{baumslag_residually_1974}.  Nevertheless, many important questions of this type remain open, of which the following question of Remeslennikov is one of the most notable \cite[Question 15]{noskov_infinite_1979}.

\begin{question}[Remeslennikov]\label{qu: Remeslennikov}
Suppose that $F$ is a finitely generated, non-abelian free group and that $\Gamma$ is finitely generated and residually finite. If $\wh{\Gamma}\cong\wh{F}$, does it follow that $\Gamma\cong F$?
\end{question}

It is particularly natural to consider Question \ref{qu: Remeslennikov} when $\Gamma$ is a limit group. Indeed, limit groups are closely related to free groups (for instance, Remeslennikov showed that they are precisely the \emph{existentially free groups} \cite{remeslennikov_exists-free_1989}), and are frequently hard to distinguish from them.  Bridson, Conder and Reid \cite{bridson_determining_2016} pointed out that Corollary \ref{lettercor: Limit groups}, combined with the results of \cite{wilton_halls_2008}, would resolve Remeslennikov's question in this case.

\begin{lettercor}\label{introcor: Remeslennikov for limit groups}
If $L$ is a limit group and not free then the profinite completion $\wh{L}$ is not isomorphic to the profinite completion of any free group.
\end{lettercor}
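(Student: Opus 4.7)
The plan is to argue by contradiction: suppose $L$ is a limit group that is not free yet satisfies $\wh{L}\cong\wh{F}$ for some finitely generated free group $F$. The strategy is to locate a finitely generated subgroup $H\leq L$ whose profinite completion has cohomological dimension~$2$, embed $\wh{H}$ as a closed subgroup of $\wh{L}$, and thereby contradict the fact that the free profinite group $\wh{F}$ has cohomological dimension~$1$.

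First I would produce $H$. A Grushko decomposition $L=L_1*\cdots*L_n*F_k$, together with the assumption that $L$ is not free, supplies a non-cyclic, freely indecomposable limit group factor $L_0$. If $L_0$ is not hyperbolic then, as recalled in the introduction following \cite{alibegovic_combination_2005,dahmani_combination_2003}, it contains a copy of $\Z^2$; otherwise $L_0$ is one-ended and hyperbolic, and Corollary \ref{lettercor: Limit groups} supplies a closed surface subgroup $H\leq L_0$. Either way, $L$ contains a subgroup $H$ that is isomorphic to $\Z^2$ or to the fundamental group of a closed orientable surface of positive genus (passing to an orientation double cover in the non-orientable case).

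Next I would invoke the main result of \cite{wilton_halls_2008}: every finitely generated subgroup of a limit group is separable. A standard argument then upgrades the inclusion $H\hookrightarrow L$ to an injection $\wh{H}\hookrightarrow\wh{L}\cong\wh{F}$, so $\wh{H}$ sits inside a free profinite group as a closed subgroup.

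The remaining step is the cohomological-dimension comparison, which I expect to be the main point of friction in a careful write-up. Free profinite groups have cohomological dimension~$1$, a property inherited by all closed subgroups, so $\wh{F}$ contains no closed subgroup of cohomological dimension $\geq 2$. On the other hand, both $\Z^2$ and closed orientable surface groups of positive genus are good in the sense of Serre, so $H^2(\wh{H};\F_p)\cong H^2(H;\F_p)\neq 0$ for a suitable prime~$p$; hence $\wh{H}$ has cohomological dimension exactly~$2$, giving the desired contradiction.
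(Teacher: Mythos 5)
Your argument follows the same skeleton as the paper's: locate a surface (or $\Z^2$) subgroup $H\leq L$ via Corollary~\ref{lettercor: Limit groups}, use goodness of $H$ to see nonzero continuous $H^2$, and contradict the cohomological triviality of free profinite groups. The route differs only in packaging — you embed $\wh{H}$ directly into $\wh{L}$ and compare cohomological dimensions, while the paper passes to a finite-index subgroup $L_0\leq L$ and shows $H^2(\wh{L}_0;\Z/2)\neq 0$ (Theorem~\ref{thm: Profinite vb2 for limit groups}), then uses that open subgroups of free profinite groups are free profinite. Both work, provided the right input from \cite{wilton_halls_2008}.

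That input is where your write-up has a genuine gap. You cite the main result of \cite{wilton_halls_2008} as \emph{subgroup separability} and then claim that ``a standard argument'' upgrades $H\hookrightarrow L$ to an injection $\wh{H}\hookrightarrow\wh{L}$. Separability (i.e.\ $H$ being closed in the profinite topology of $L$) is not enough for this. What is needed is that the profinite topology on $L$ induces the \emph{full} profinite topology on $H$ — equivalently, for every finite-index $N\trianglelefteq H$ there is a finite-index $M\leq L$ with $M\cap H\leq N$ — and this is a strictly stronger condition than $H$ being closed. Fortunately, what \cite{wilton_halls_2008} actually proves is stronger than separability: every finitely generated subgroup of a limit group is a \emph{virtual retract}. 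That is exactly what the paper's proof of Theorem~\ref{thm: Profinite vb2 for limit groups} invokes: take $L_0\leq L$ of finite index with a retraction $r:L_0\to H$; then $\hat r\circ\hat\imath=\id_{\wh{H}}$ shows $\wh{H}\hookrightarrow\wh{L}_0\hookrightarrow\wh{L}$. Replace your separability citation with the virtual retract statement and your cohomological-dimension comparison goes through; as written, the crucial injectivity of $\wh{H}\to\wh{L}$ is not justified.

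Two small remarks. First, you do not need the hyperbolic/non-hyperbolic case split or the orientation double cover: Corollary~\ref{lettercor: Limit groups} already produces a closed surface subgroup in the one-ended case (including $\Z^2$ for the torus when $L_0$ is non-hyperbolic), and with $\Z/2$ coefficients every closed surface of non-positive Euler characteristic — orientable or not — has $H^2\neq 0$, which is precisely why the paper works mod $2$. Second, the Grushko step is fine: a finitely generated torsion-free freely indecomposable non-cyclic group is one-ended by Stallings, so the relevant free factor is indeed one-ended.
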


The same ideas give a new proof of a theorem of Puder and Parzanchevski \cite[Corollary 1.5]{puder_measure_2015}. Recall that a word $w$ in a free group $F$ is called \emph{primitive} if $F$ splits as a free product $\langle w\rangle*F_1$.  Similarly, an element $\hat{w}$ of the profinite free group $\wh{F}$ is called \emph{primitive} if $\wh{F}$ decomposes as a  coproduct $\overline{\langle \hat{w}\rangle}\coprod\wh{F}_1$ in the category of profinite groups.  Puder and Parzanchevski showed that an element $w$ of $F$ that is primitive in the profinite completion $\wh{F}$ is already primitive in $F$ \cite[Corollary 1.5]{puder_measure_2015}.  This can be thought of as  answering a relative version of Question \ref{qu: Remeslennikov}.

In fact, we can generalize their result, from words to multiwords (i.e.\ finite indexed sets of words).  Let us call a multiword $\w=\{w_1,\ldots,w_n\}$ in $F$ \emph{primitive} if $F$ splits as a free product $\langle w_1\rangle*\ldots*\langle w_n\rangle*F_1$ for some $F_1$, and make the corresponding definition of a primitive multiword in $\wh{F}$.  

\begin{lettercor}\label{introcor: PP}
Let $F$ be a finitely generated free group.  If a multiword $\w$ is primitive in the profinite completion $\wh{F}$ then it is primitive in $F$.
\end{lettercor}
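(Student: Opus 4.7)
The plan is to argue by contradiction in the style of the proof of Corollary \ref{introcor: Remeslennikov for limit groups}. Suppose $\w = \{w_1,\ldots,w_n\}$ is primitive in $\wh{F}$ but not in $F$. I will construct from $F$ and $\w$ a group $G$ that falls under Theorem \ref{thm: GOFGWCEG}; the surface subgroup produced by that theorem will conflict with the profinite coproduct structure of $\wh{F}$ coming from the profinite primitivity of $\w$.

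Take $G$ to be the \emph{multi-double} of $F$ along $\w$: the fundamental group of the graph of groups with two vertex groups each equal to $F$ and $n$ edges, the $i$-th carrying edge group $\langle w_i\rangle$, identified with itself via the identity on both sides. This is a graph of free groups with cyclic edge groups. If $\w$ is primitive, with $F = \langle w_1\rangle * \cdots * \langle w_n\rangle * F_1$, a direct Bass--Serre calculation (amalgamate the two copies of $F$ along $\langle w_1\rangle$, then use the HNN stable letter for each remaining edge $e_i$ to eliminate the duplicate free factor $\langle w_i'\rangle$ coming from the second vertex) shows that $G$ is free. Conversely, I expect that whenever $\w$ is not primitive in $F$, the Grushko decomposition $G = G_0 * F_{\mathrm{free}}$ has a non-trivial one-ended factor $G_0$, since the obstruction to the elimination above is precisely the failure of the $\langle w_i\rangle$'s to assemble into a free factor of $F$. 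After replacing $\w$ by a maximal sub-multiword of pairwise non-commensurable elements (an operation that preserves both discrete and profinite primitivity), $G$ contains no Baumslag--Solitar subgroup, and $G_0$ is one-ended hyperbolic. Theorem \ref{thm: GOFGWCEG} then produces a quasiconvex surface subgroup $S \leq G_0 \leq G$.

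By Wise's theorem $G$ is virtually compact special, and hence LERF, so $S$ is separable in $G$ and the inclusion induces an injection $\wh{S} \hookrightarrow \wh{G}$. On the other hand, the profinite primitivity of $\w$ gives a decomposition $\wh{F} = \overline{\langle\hat{w}_1\rangle} \coprod \cdots \coprod \overline{\langle\hat{w}_n\rangle} \coprod \wh{F}_1$ in the category of profinite groups. Using the efficiency of the profinite completion for graphs of free groups with cyclic edge groups (cyclic subgroups of free groups are closed in the profinite topology, so Ribes--Zalesskii-type results identify $\wh{G}$ with the profinite fundamental group of the corresponding profinite graph of groups), the amalgamation-and-elimination computation of the previous paragraph runs verbatim in the profinite category, and exhibits $\wh{G}$ as a free profinite group. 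Thus $\wh{S}$ is a closed subgroup of a free profinite group, hence projective and of cohomological dimension at most $1$; but surface groups are good in the sense of Serre, so $\wh{S}$ has cohomological dimension $2$, a contradiction.

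The principal obstacle is the structural claim that non-primitivity of $\w$ in $F$ forces the Grushko factor $G_0$ of the multi-double to be non-trivial; this is a multi-edge generalization of the classical fact that $F *_{\langle w\rangle} F$ is one-ended whenever $w$ lies in no proper free factor of $F$, but requires a careful analysis of the Bass--Serre tree of the multi-edge graph of groups, perhaps via the relative JSJ decomposition of $F$ with respect to $\w$. A secondary but essential technical point is verifying that the profinite completion of $G$ splits as claimed, which should reduce to known efficiency results for profinite Bass--Serre theory once one knows that the cyclic subgroups $\langle w_i\rangle$ are separable in $F$ (which they are, as $F$ is LERF).
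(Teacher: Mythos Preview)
Your overall architecture---form the double $D(\w)$, find a surface subgroup inside it, and derive a cohomological contradiction from the assumed profinite freeness of $\wh{D(\w)}$---is exactly the paper's strategy.  The paper's execution, however, is considerably shorter, because it exploits one fact you never invoke: the double $D(\w)$ is a \emph{limit group}.  This lets the paper apply Theorem~\ref{thm: Profinite vb2 for limit groups} as a black box (surface subgroup via Corollary~\ref{cor: Surface subgroups of limit groups}, cohomology injection via the virtual-retract theorem of \cite{wilton_halls_2008}), and the proof is three lines.  By contrast, you apply Theorem~\ref{thm: GOFGWCEG} directly to a one-ended Grushko factor $G_0$ of $D(\w)$, which forces you to check separately that $G_0$ is hyperbolic and is itself a graph of free groups with cyclic edge groups, and to use LERF (Wise) rather than virtual retraction for the embedding $\wh{S}\hookrightarrow\wh{G}$.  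All of this can be made to work, but it is more labour for the same conclusion.

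Two specific comments.  First, your ``principal obstacle''---that $\w$ non-primitive in $F$ forces $D(\w)$ to have a non-free Grushko factor---is precisely the statement that the pair $(F,\w)$ has an irreducible free factor in the sense of Definition~\ref{defn: Irreducible pair}; the remark immediately following that definition says exactly that irreducibility of a pair is equivalent to one-endedness of its double, so this obstacle dissolves once you look there (the paper's own proof uses this implicitly as well).  Second, your reduction ``replace $\w$ by a maximal sub-multiword of pairwise non-commensurable elements'' is dangerous as stated: that operation does \emph{not} preserve non-primitivity in $F$ (e.g.\ $\{a,a\}\mapsto\{a\}$).  What saves you is that the hypothesis ``$\w$ primitive in $\wh{F}$'' already forbids proper powers and conjugate repetitions---this follows by abelianising the profinite coproduct decomposition---so no reduction is needed and hyperbolicity of $D(\w)$ comes for free.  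You should replace the reduction step by that observation.
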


It is interesting to contrast the techniques of this paper with those of \cite{puder_measure_2015}.  Puder and Parzanchevski deduce their result from their beautiful characterization of primitive words in free groups as precisely the \emph{measure-preserving words} \cite[Theorem 1.1]{puder_measure_2015}.  The proof given here is cohomological, and goes via the fact that the virtual second cohomology of the profinite completion of a non-free limit group is non-zero.  Corollaries \ref{introcor: Remeslennikov for limit groups} and \ref{introcor: PP} follow quickly from Theorem \ref{thm: Profinite vb2 for limit groups}. We refer the reader to that theorem and the subsequent remarks for details.

Let us now turn to discuss the proof of Theorem \ref{thm: GOFGWCEG}.    Our main technical result addresses a \emph{relative} version of Gromov's question, finding surfaces in free groups relative to families of cyclic subgroups.  To state it concisely we need to introduce some definitions.

Consider a graph of spaces $X$ in the sense of Scott and Wall \cite{scott_topological_1979}, and let $v$ be a vertex with incident edges $e_1,\ldots,e_n$. The vertex space $X_v$, together with the maps of incident edge spaces $w_i:X_{e_i}\to X_v$, defines a \emph{space pair} $(X_v,\w)$.   In the case of interest, the vertex space $X_v$ will always be a graph (usually denoted by $\Gamma$), and the edge spaces $X_{e_i}$ will be circles $S^1_i$; such a $(\Gamma,\w)$ is called a \emph{graph pair}.

Global properties of the graph of spaces $X$ can be characterized locally, using  properties of the pairs associated to vertex spaces.  For instance, $X$ is called \emph{irreducible} if $\pi_1X$ does not split over a finite subgroup; we may correspondingly define an \emph{irreducible pair} $(\Gamma,\w)$ (see Definition \ref{defn: Irreducible pair}), and a lemma of Shenitzer asserts that if the pairs associated to the vertices are irreducible, then so is $X$ \cite[Theorem 18]{wilton_one-ended_2011}.  Corresponding to the notion of a $\pi_1$-injective map of graphs of spaces $Y\to X$, we have an \emph{essential} map of pairs $(\Lambda,\uu)\to (\Gamma,\w)$, and indeed if a morphism of graphs of spaces is essential on each space pair associated to a vertex then the morphism is itself $\pi_1$-injective (see Proposition \ref{prop: Essential maps of graphs of groups}).

We are now ready to state the main technical result.

\begin{letterthm}\label{thm: Surfaces in graphs}
If $(\Gamma,\w)$ is an irreducible graph pair then there is a compact surface with boundary $\Sigma$ and an essential map of pairs $(\Sigma,\partial\Sigma)\to(\Gamma,\w)$.
\end{letterthm}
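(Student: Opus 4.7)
The plan is to adapt Calegari's extremal-surface machinery for stable commutator length \cite{calegari_stable_2009,calegari_surface_2008} to the relative setting of graph pairs. Consider the class $\mathcal{F}$ of admissible maps $f:(S,\partial S)\to(\Gamma,\w)$, where $S$ is a compact oriented surface with no disk or sphere components and each component of $\partial S$ covers some $S^1_i$. Such an $f$ determines a rational chain in the kernel of $H_1(\coprod_i S^1_i;\Q)\to H_1(\Gamma;\Q)$, and the ratio $-\chi^-(S)/n(f)$, with $n(f)$ the total boundary degree, computes (up to a universal constant) the stable commutator length of that chain. By Calegari's rationality theorem, whenever this infimum is finite and positive, it is realised by a genuine extremal surface.

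The first step is to show that $\mathcal{F}$ is non-empty with a positive infimum. Non-emptiness reduces, after passing to a finite cover of $\Gamma$ in which the lifts of the $w_i$ become dependent in $H_1$, to a pigeonhole argument together with a Seifert-type filling of the resulting null-homologous cycle. Positivity of the infimum is the crucial point: if some admissible chain had stable commutator length zero, then Bavard duality together with arbitrarily efficient fillings would exhibit long essential cylinders inside the double of $(\Gamma,\w)$, and the Whitehead-graph analysis of \cite{wilton_one-ended_2011} should convert these into a splitting of $(\Gamma,\w)$ over a trivial or cyclic subgroup, contradicting irreducibility.

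Given an extremal $f:\Sigma\to(\Gamma,\w)$, I would argue it is essential by a surgery argument. If $f$ is not $\pi_1$-injective on the pair, there is either an essential simple closed curve $\gamma\subset\Sigma$ whose image in $\Gamma$ is null-homotopic, or an essential arc whose image is null-homotopic rel the $w_i$. Cutting along such a $\gamma$ or arc produces a new admissible surface $\Sigma'$; either $-\chi^-(\Sigma')/n(\Sigma')<-\chi^-(\Sigma)/n(\Sigma)$, contradicting extremality, or one of the resulting pieces is an annulus exhibiting a conjugacy between two of the $w_i$ in $\pi_1\Gamma$. This last case again yields a $\Z$-splitting of the pair and hence a contradiction with irreducibility.

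The principal obstacle I anticipate is positivity of the infimum. Translating the combinatorial irreducibility hypothesis into the analytic statement that no admissible chain has vanishing stable commutator length is subtle, and appears to require extracting a quasimorphism on $\pi_1\Gamma$ which is sensitive to each $w_i$. The structural dichotomy from \cite{wilton_one-ended_2011} between surface-like pairs and pairs containing a proper irreducible subpair is the most plausible source for such a quasimorphism, and I would expect a substantial part of the proof to be devoted to making this translation precise.
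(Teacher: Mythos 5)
Your proposal is inspired by the same source as the paper (Calegari's rational linear-programming machinery for $\mathrm{scl}$), but it takes a genuinely different route, and the route has a gap that the paper's argument was specifically designed to avoid.

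The central difficulty is the non-emptiness of your class $\mathcal{F}$ of admissible surfaces. When Calegari's cone is defined directly over surface maps $(S,\partial S)\to(\Gamma,\w)$, the cone is non-zero only if the chain $\sum_i[w_i]$ vanishes rationally, i.e.\ if $H_2$ of the double is non-zero -- and this is exactly the hypothesis of \cite{calegari_surface_2008} that Theorem B drops. Your remedy, ``pass to a finite cover in which the lifts of the $w_i$ become dependent in $H_1$,'' is not available: showing that some finite cover of the double has non-zero $b_2$ is essentially equivalent to (and in fact a consequence of) the theorem being proved, and is a hard open problem for general one-ended hyperbolic groups. There is no elementary pigeonhole reason for such a cover to exist, so this step of your argument has no proof as stated. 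The paper sidesteps this entirely by building the polyhedral cone $C_\curlyP$ not over admissible \emph{surfaces} but over admissible $\partial$-immersions $(\Lambda,\uu)\to(\Delta,\vv)\looparrowright(\Gamma,\w)$ of \emph{locally irreducible graph pairs}. Irreducibility of $(\Gamma,\w)$ then guarantees $C_\curlyP\neq 0$ for free, since the identity $(\Gamma,\w)\to(\Gamma,\w)$, after unfolding via the converse to Whitehead's lemma, supplies an integer point.

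Relatedly, your ``positivity of the infimum via quasimorphisms and Bavard duality'' step remains speculative, and your essentiality-by-surgery step is not needed in the paper's setup: there, $\partial$-essentiality is built into the notion of a $\partial$-immersion (Lemma \ref{lem: d-essential}) and $\pi_1$-injectivity comes from the Stallings immersion, so essentiality of the final surface map is automatic. What replaces your hard analytic steps is entirely combinatorial: one maximizes the projective $\curlyP$-rank $\rho_\curlyP=\chi_-/n$ over $\mathbb{P}(C_\curlyP)$ (a linear-programming argument, so the maximum is rational and realized by an integer point), and then shows -- using the relative JSJ decomposition and the covering result of \cite{wilton_one-ended_2011} -- that a maximal $\partial$-immersion cannot have rigid vertices in its JSJ, hence is of weak surface type. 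This is where the real content lies, and it is a different mechanism from the quasimorphism you propose to extract. In short: you would need a new idea at the very first step, and even granting it, the rest of the outline would need to be replaced by the JSJ/maximality argument to close the proof.
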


In fact, we obtain a bit more control than this -- the surface is also \emph{admissible}, meaning that every point of the domain of  $\w$ has the same number of preimages in $\partial\Sigma$;  see Theorem \ref{thm: Essential surfaces exist}.  It has been well known for a while that a result like Theorem \ref{thm: Surfaces in graphs} would imply the existence of surface subgroups in graphs of free groups with cyclic edge groups -- see, for instance, \cite{calegari_surface_2008} or \cite{kim_polygonal_2012}.

Let us now briefly sketch the proof of Theorem \ref{thm: Surfaces in graphs}.  It can be thought of as a combination of the techniques of \cite{calegari_stable_2009} and \cite{wilton_one-ended_2011}.

First, we study irreducible pairs $(\Lambda,\uu)$ that map into the irreducible pair $(\Gamma,\w)$.     The irreducibility of the pairs $(\Gamma,\w)$ and $(\Lambda,\uu)$ is characterized using \emph{Whitehead graphs}.  We would like to study essential maps $(\Lambda,\uu)\to (\Gamma,\w)$, but it turns out to be difficult to simultaneously characterize both the fact that $(\Lambda,\uu)$ is irreducible and the fact that the map $(\Lambda,\uu)\to(\Gamma,\w)$ is essential.   In order to recognize both these properties simultaneously we work with \emph{$\partial$-immersions}, which are compositions $(\Lambda,\uu)\to(\Delta,\vv)\to (\Gamma,\w)$.  We can recognize if the pair $(\Lambda,\uu)$ is \emph{locally} irreducible, and this guarantees that $(\Delta,\vv)$ is (weakly) irreducible.

The idea behind the proof of Theorem \ref{thm: Surfaces in graphs} is that, among all irreducible pairs mapping to $(\Gamma,\w)$, the pairs of surface type should be the ones of most negative Euler characteristic.  To make this precise, we define a positive polyhedral cone $\cone$ in a finite-dimensional vector space, such that the integer points in $\cone$ correspond to admissible $\partial$-immersions of (weakly) irreducible pairs $(\Delta,\vv)$.     We also define the \emph{projective $\curlyP$-rank function} $\rho_\curlyP$ on the projectivization $\Proj(\cone)$ as a quotient of two linear functionals: the (negation of the) Euler characteristic of $\Delta$, and the degree with which $\vv$ covers $\w$.   In particular, $\rho_\curlyP$ achieves its maximum value at some vertex of the polyhedron  $\Proj(\cone)$, which is necessarily a rational line in $\cone$.  Since this rational line contains an integer point, an admissible $\partial$-immersion of an irreducible pair exists that maximizes $\rho_\curlyP$.  We call such a pair  \emph{maximal}.

 This approach is similar to the argument of \cite{calegari_stable_2009}, in which a polyhedral cone is defined whose integer points correspond to certain maps of surfaces $(\Sigma,\partial\Sigma)\to(\Gamma,\w)$.    The hypothesis in \cite{calegari_surface_2008} that rational second homology is non-zero is needed to ensure that this cone is non-zero.  In contrast, the cone $\cone$ is guaranteed to be non-zero since, whenever $(\Gamma,\w)$ is irreducible, the identity map $(\Gamma,\w)\to(\Gamma,\w)$ leads to an admissible $\partial$-immersion of an irreducible pair.

The final step of the proof applies the ideas of \cite{wilton_one-ended_2011} to the relative JSJ decomposition of a maximal pair $(\Delta,\vv)$. The conclusion is that any maximal pair has no rigid vertices in its JSJ decomposition.  It follows that the JSJ decomposition is built from surface pieces, and one quickly concludes that a pair of surface type exists.  Thus, we deduce the existence of an admissible, $\partial$-essential surface $(\Sigma,\partial\Sigma)\to(\Gamma,\w)$. 

The paper is structured as follows. In Section \ref{sec: Pairs}, we define pairs of groups, spaces and graphs, the natural notions of maps between them, and various properties of those maps.  In Section \ref{sec: Whitehead graphs} we adapt the classical theory of Whitehead graphs to the setting of a graph pair  $(\Gamma,\w)$.  The main result here is a converse to Whitehead's lemma (Lemma \ref{lem: Converse to Whitehead}), which asserts that an irreducible pair can always be unfolded to a \emph{locally} irreducible pair, in which the irreducibility is recognized by the Whitehead graphs at the vertices.  In Section \ref{sec: Admissible pairs}, we characterize admissible $\partial$-immersions from locally irreducible graph pairs into $(\Gamma,\w)$ as precisely those maps that can be built from a certain finite set $\curlyP$ of \emph{pieces}.  We define the cone $\cone$ and note that there is a surjective map from admissible $\partial$-immersions of locally irreducible pairs to the integer points of $\cone$.  In Section \ref{sec: Rationality}, we define the \emph{projective $\curlyP$-rank} function $\rho_\curlyP$, and prove that it attains its extremal values at rational points of $\Proj(\cone)$.  We deduce the existence of a maximal, admissible $\partial$-immersion from a locally irreducible pair.  In Section \ref{sec: Surfaces}, we apply the results of \cite{wilton_one-ended_2011} to study admissible $\partial$-immersions with maximal projective $\curlyP$-rank.  The main result is that there is such a maximal pair of (weak) surface type (Theorem \ref{thm: Maximal pairs and surfaces}).  Theorem \ref{thm: Essential surfaces exist}, and hence Theorem \ref{thm: Surfaces in graphs}, follow quickly.  In Section \ref{sec: Hierarchies} we deduce Theorem \ref{thm: GOFGWCEG} and Corollaries \ref{lettercor: Surface or rigid} and \ref{lettercor: Limit groups}.   Finally, in Section \ref{sec: Profinite rigidity}, we deduce Corollaries \ref{introcor: Remeslennikov for limit groups} and \ref{introcor: PP}.

\subsection*{Acknowledgements}

Alan Reid asked me whether one-ended limit groups have surface subgroups in 2006.  I have worked on finding surface subgroups with a variety of collaborators, and am grateful to them all: Lars Louder, Sang-hyun Kim, Cameron Gordon, Danny Calegari, Ben Barrett.  Not all of these projects led to publications, but I learned a lot from each of them.  A conversation with Fr\'ed\'eric Haglund and Pierre Pansu led to the discovery of a serious mistake in an earlier attempted proof.  Thanks are also due to Daniel Groves for comments on an early version of this paper. I am especially grateful to Lars Louder for spotting a subtle error in the first version of this paper.

\section{Pairs}\label{sec: Pairs}

We will make heavy use of graphs of groups and Bass--Serre theory, as detailed in Serre's standard work on the subject \cite{serre_arbres_1977}, to which the reader is referred for details.  To fix notation, we recall the definition of a graph.

\begin{definition}
A \emph{graph} $\Gamma$ consists of a vertex set $V$, an edge set $E$, a fixed-point free involution $E\to E$ denoted by $e\mapsto\bar{e}$, and an \emph{origin map} $\iota:E\to V$.  The \emph{terminus map} $\tau:E\to V$ is defined by $\tau(e)=\iota(\bar{e})$.  
\end{definition}

The edges of $\Gamma$ are thus equipped with orientations, and the unoriented edges are the pairs $\{e,\bar{e}\}$.

As well as using graphs of groups, we will also frequently adopt the topological point of view, in which a graph of groups is viewed as the fundamental group of a graph of spaces \cite{scott_topological_1979}.  Graphs of spaces are not required to be connected, which will present some technical advantages, although the attaching maps are required to be injective on fundamental groups.  Analogously, we may also work with disconnected graphs of groups, as long as we are careful to choose a base point before talking about the fundamental group.

\subsection{Group pairs}

It is particularly important for us to work with \emph{relative} versions of graphs of groups and spaces, which characterize the relationship between a vertex group (or space) and its incident edge groups (or spaces).  To this end, we define various notions of pairs.  We start with pairs of groups.

\begin{definition}
A \emph{group pair} is a pair $(G,\curlyA)$, where $G$ is a group and $\curlyA$ is a $G$-set.  It is often convenient to choose a finite set of orbit representatives $\{a_i\}$, to let $H_i=\Stab_G(a_i)$, and to specify the pair via the data $(G,\{H_i\})$.   We will use both the notations $(G,\curlyA)$ and  $(G,\{H_i\})$ to specify group pairs, without fear of confusion.
\end{definition}

The key example of a group pair arises when considering a vertex $v$ of a graph of groups $\curlyG$.  Having fixed a lift $\tilde{v}$ of $v$ to the Bass--Serre tree, one takes $
G$ to be the vertex stabilizer $\curlyG_{\tilde{v}}$ and $\curlyA$ to be the set of edges incident at $\tilde{v}$.     

\begin{definition}
A \emph{morphism of graphs of groups} is a morphism of the underlying graphs, accompanied by associated maps of vertex groups and edge groups that intertwine with the attaching maps.    This is most easily thought of by passing to the Bass--Serre tree.  A morphism of graphs of groups  induces a homomorphism of fundamental groups, and lifts to an equivariant map on Bass--Serre trees.
\end{definition}

This motivates the following definition for pairs.

\begin{definition}
A \emph{morphism of group pairs} $(f,\phi):(G,\curlyA)\to (G',\curlyA')$ consists of a set map $\phi:\curlyA\to\curlyA'$ and a homomorphism $f:G\to G'$ that intertwines $\phi$.  That is, we require that
\[
\phi(g.a)=f(g).\phi(a)
\]
for all $g\in G$ and $a\in\curlyA$.
\end{definition}

In particular, a morphism of graphs of groups defines morphisms of the group pairs at each vertex, and conversely morphisms of pairs that satisfy an obvious compatibility condition can be pieced together to give a morphism of a graph of groups.

It is convenient if we can detect \emph{global} properties of morphisms of graphs of groups by looking at \emph{local} properties of the induced maps on group pairs. We are particularly concerned with $\pi_1$-injectivity, and so we need to develop corresponding notions for group pairs.    Requiring that the map of groups $f:G\to G'$ be injective is clearly significant. The following condition is also important.

%Commented out the old definition, which I don't think is quite right.

\iffalse
\begin{definition}
A morphism of group pairs $(G,\curlyA)\to (G',\curlyA')$ is \emph{$\partial$-essential} if it satisfies the following two conditions.
\begin{enumerate}[(i)]
\item The induced map on quotients $G\backslash\curlyA\to G'\backslash\curlyA'$ is injective.
\item For all $a\in\curlyA$, 
\[
\mathrm{Stab}_{f(G)}(\phi(a))=f(\mathrm{Stab}_G(a))~.
\]
\end{enumerate}
\end{definition}
\fi

\begin{definition}
A morphism of group pairs $(f,\phi):(G,\curlyA)\to (G',\curlyA')$ is \emph{$\partial$-essential} if the map
\[
\ker f\backslash\curlyA\to\curlyA'
\]
induced by $\phi$ is injective.
\end{definition}

When applied to pairs associated to graphs of groups, this condition guarantees that the induced map on Bass--Serre trees does not factor through a fold. Putting this together with injectivity, we have the notion of an essential morphism.

\begin{definition}
A morphism of group pairs $(f,\phi):(G,\curlyA)\to (G',\curlyA')$ is \emph{essential} if the homomorphism $f:G\to G'$ is injective and the morphism is also $\partial$-essential.
\end{definition}

From this one easily deduces a local criterion for morphisms of graphs of groups to be $\pi_1$-injective.

\begin{proposition}\label{prop: Essential maps of graphs of groups}
Suppose that $f$ is a morphism of graphs of groups.  If $f$ induces essential morphisms on the group pairs corresponding to vertices, then $f$ induces an injective map on fundamental groups.
\end{proposition}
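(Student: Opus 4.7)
The natural strategy is to translate the problem into Bass--Serre theory. Let $T$ and $T'$ be the Bass--Serre trees of the source and target graphs of groups with fundamental groups $G$ and $G'$, and let $\tilde f\colon T\to T'$ be the $f_*$-equivariant simplicial map induced by the morphism $f$ of graphs of groups, where $f_*\colon G\to G'$ is the induced homomorphism whose injectivity we wish to establish.

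The plan is to show first that $\tilde f$ is a simplicial immersion and then that $\tilde f$ is globally injective as a map of trees. At a vertex $v\in T$, the star of $v$ is recorded by the group pair $(G_v,\curlyA_v)$, where $\curlyA_v$ is the set of edges of $T$ incident at $v$, and the restriction of $\tilde f$ to this star is precisely the morphism of pairs at $v$. Since the vertex-group map $f|_{G_v}$ is injective by hypothesis, $\ker f|_{G_v}$ is trivial and the $\partial$-essential condition reduces to injectivity of the edge map $\curlyA_v\to \curlyA'_{\tilde f(v)}$, which is exactly the statement that $\tilde f$ does not fold at $v$. Recognising that $\partial$-essential translates precisely to ``no folding in the Bass--Serre tree'' is the main conceptual point. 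The passage from immersion to global injectivity is then a standard tree argument: if $\tilde f(v_1)=\tilde f(v_2)$ with $v_1\neq v_2$, the reduced edge path from $v_1$ to $v_2$ in $T$ would map to a closed edge loop in $T'$, which must backtrack since $T'$ is a tree; the first backtracking produces two distinct edges with a common origin in $T$ that share the same image, contradicting the immersion property.

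To conclude injectivity of $f_*$, suppose $k\in\ker f_*$. Then $k$ acts trivially on $T'$ through $f_*$, and by $f_*$-equivariance $\tilde f(k\cdot x)=\tilde f(x)$ for all $x\in T$. Injectivity of $\tilde f$ forces $k\cdot x=x$, so $k$ fixes every vertex of $T$ and in particular lies in some vertex stabiliser $G_v$. The injectivity half of the essential condition at $v$ now yields $k=1$, as required.
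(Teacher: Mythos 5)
Your proof is correct and follows essentially the same route as the paper's: both pass to the Bass--Serre tree, observe that the essential hypothesis forbids folding, and conclude that any kernel element must be elliptic and therefore trivial by injectivity on vertex stabilizers. The only small variation is that you deduce global injectivity of the tree map from the immersion property, whereas the paper argues directly that the immersion sends hyperbolic isometries to hyperbolic isometries.
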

\begin{proof}
Suppose that a group element $g$ is in the kernel of $f$. Since the map on Bass--Serre trees does not factor through a fold, if $g$ acts hyperbolically on the Bass--Serre tree then so does its image, contradicting the fact that $g$ is in the kernel.  Therefore $g$ is elliptic, but since $f$ is injective on vertex stabilizers, it follows that $g=1$.
\end{proof}

\subsection{Space pairs}

We next make analogous definitions for spaces.

\begin{definition}
A \emph{space pair} consists of cell complexes $X,Y$ together with a continuous map $\w:Y\to X$.  We will frequently take $\pi_0 Y$ to be an index set $I$, and let
\[
w_i:Y_i\to X
\]
denote the restriction of $\w$ to $Y_i$, the path component of $Y$ corresponding to $i\in I$.   We will often use the notation $(X,\w)$ to denote such a space pair.
\end{definition}

In most of what follows, we will take $X$ to be a graph and $Y$ to be a disjoint union of circles. However, it is useful to allow the extra flexibility of the general definition. 

If $X$ is path connected then a space pair $(X,\w)$ naturally defines a group pair $(G,\curlyA)$.   Let $p:\widetilde{X}\to X$ be the universal cover, and consider the fibre product
\[
\widetilde{Y}:=\widetilde{X}\times_X Y=\{(\tilde{x},y)\mid p(\tilde{x})=\w(y)\}~.
\]
Taking $G=\pi_1X$ and $\curlyA=\pi_0\widetilde{Y}$, we see that $G$ acts naturally on $\curlyA$, and so $(G,\curlyA)$ is a group pair.

This definition is more transparent if one thinks of $X$ as a vertex space of a graph of spaces $Z$.  The universal cover $\widetilde{Z}$ of $Z$ inherits a decomposition as a graph of spaces; $\widetilde{X}$ appears as a vertex space of $\widetilde{Z}$, and the fibre product $\widetilde{Y}$ is the disjoint union of the edge spaces of $\widetilde{Z}$ incident at $\widetilde{X}$.

We next define morphisms of space pairs, analogously to morphisms of group pairs.

\begin{definition}
Let $\w:Y\to X$ and $\w':Y'\to X'$ define space pairs $(X,\w)$ and $(X,\w')$.  A \emph{morphism of space pairs} $(X,\w)\to(X',\w')$ consists of continuous maps $\phi:Y\to Y'$ and $f:X\to X'$ so that $f\circ\w=\w'\circ \phi$.
\end{definition}

As in the case of groups, compatible collections of maps of pairs can be glued together to construct a map of graphs of spaces.  Again, we will need a definition of a $\partial$-essential morphism.

\begin{definition}
Consider a morphism of space pairs $(X,\w)\to(X',\w')$.  Let $\widetilde{X}'$ be the universal cover of $X'$ and let $\widehat{X}$ be the corresponding covering space of $X$, obtained by pulling back the covering map $\widetilde{X}'\to X'$ along $f$.  Consider the fibre products
\[
\widehat{Y}= \widehat{X}\times_X Y~,~\widetilde{Y}'= \widetilde{X}'\times_{X'} Y'~,
\]
and note that the map $\phi:Y\to Y'$ lifts to a map $\tilde{\phi}:\widehat{Y}\to \widetilde{Y}'$.  The morphism $(X,\w)\to(X',\w')$ is called \emph{$\partial$-essential} if $\tilde{\phi}$ induces an injective map $\pi_0\widehat{Y}\to\pi_0\widetilde{Y}'$.
\end{definition}

Again, we combine this with injectivity on vertex groups to obtain a notion of an essential morphism.

\begin{definition}
A morphism of space pairs  $f:(X,\w)\to(X',\w')$ is \emph{essential} if it is $\partial$-essential and $f:X\to X'$ is $\pi_1$-injective.
\end{definition}

Finally, we note that our two definitions of $\partial$-essential pairs coincide.

\begin{lemma}
Let $(X,\w)\to (X',\w')$ be a morphism of space pairs, inducing the corresponding morphism of pairs $(G,\curlyA)\to (G',\curlyA')$ on fundamental groups.   The morphism $(X,\w)\to (X',\w)$ is $\partial$-essential if and only if the morphism $(G,\curlyA)\to (G',\curlyA')$ is $\partial$-essential.  Hence, $(X,\w)\to (X',\w)$ is essential if and only if $(G,\curlyA)\to (G',\curlyA')$ is essential.
\end{lemma}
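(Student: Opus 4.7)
The plan is to identify the set-theoretic data appearing in the two definitions. Let $G = \pi_1 X$ and $G' = \pi_1 X'$, so that $\curlyA = \pi_0 \widetilde{Y}$ and $\curlyA' = \pi_0 \widetilde{Y}'$ as $G$- and $G'$-sets. The goal is to show that $\pi_0 \widehat{Y}$ is canonically identified with the quotient $\ker f_* \backslash \curlyA$, and that under this identification (together with $\pi_0 \widetilde{Y}' = \curlyA'$) the map $\tilde{\phi}_* : \pi_0 \widehat{Y} \to \pi_0 \widetilde{Y}'$ becomes exactly the map $\ker f_* \backslash \curlyA \to \curlyA'$ appearing in the group-pair definition of $\partial$-essentiality. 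Here $f_* : G \to G'$ denotes the homomorphism induced by $f : X \to X'$. Once this identification is in place the equivalence of the two notions of $\partial$-essential is immediate, and the ``hence'' clause reduces to the fact that $f : X \to X'$ is $\pi_1$-injective if and only if $f_*$ is injective.

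First I would identify $\widehat{X}$ with the covering space of $X$ corresponding to the subgroup $\ker f_* \leq G$. Since $\widehat{X}$ is by construction the pullback of $\widetilde{X}' \to X'$ along $f$, a loop in $X$ lifts to a loop in $\widehat{X}$ precisely when its $f$-image is null-homotopic in $X'$, i.e.\ when it represents an element of $\ker f_*$. Hence the universal cover $\widetilde{X} \to X$ factors through $\widehat{X}$, and in fact $\widehat{X}$ is canonically the quotient $\ker f_* \backslash \widetilde{X}$.

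Next I would use functoriality of fibre products. From $\widetilde{Y} = \widetilde{X} \times_X Y$ and $\widehat{Y} = \widehat{X} \times_X Y$, together with the identification $\widehat{X} = \ker f_* \backslash \widetilde{X}$, one reads off $\widehat{Y} = \ker f_* \backslash \widetilde{Y}$, and therefore $\pi_0 \widehat{Y} = \ker f_* \backslash \curlyA$. To pin down $\tilde{\phi}_*$, I would pick compatible basepoints and lift $f$ to an $f_*$-equivariant map $\widetilde{f} : \widetilde{X} \to \widetilde{X}'$; then $\widetilde{f}$ together with $\phi$ induces a map $\widetilde{Y} \to \widetilde{Y}'$ whose effect on $\pi_0$ is, by definition, the equivariant set map $\curlyA \to \curlyA'$ underlying the induced group-pair morphism. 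Because this map is $\ker f_*$-invariant, it descends through $\widehat{Y}$, and the descended map agrees with $\tilde{\phi}$ by uniqueness of lifts. The resulting diagram commutes, so $\tilde{\phi}_*$ is precisely the map $\ker f_* \backslash \curlyA \to \curlyA'$.

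The only real obstacle is bookkeeping: one must choose basepoints in $X$, $X'$, $Y$, $Y'$ and coherent lifts to $\widetilde{X}$, $\widetilde{X}'$, $\widehat{X}$ so that each identification above is genuinely canonical rather than merely existing up to a deck transformation. With those choices fixed, the two $\partial$-essentiality conditions are literally the same condition, written once in the language of spaces and once in the language of $G$-sets.
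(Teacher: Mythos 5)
Your proposal is correct and follows essentially the same route as the paper: identify $\widehat{X}$ with $\ker f_*\backslash\widetilde{X}$, pass to the fibre products, and conclude that $\pi_0\widehat{Y}=\ker f_*\backslash\pi_0\widetilde{Y}$. The one place you move faster than the paper is the inference from $\widehat{Y}=\ker f_*\backslash\widetilde{Y}$ (as spaces) to $\pi_0\widehat{Y}=\ker f_*\backslash\curlyA$ (as sets); this is not automatic, since $\pi_0$ does not commute with arbitrary group quotients. It is exactly the content of the paper's short path-lifting argument, and the point that makes it work is that $\widetilde{Y}\to\widehat{Y}$ is a covering map (being the pullback of $\widetilde{X}\to\widehat{X}$ along $\widehat{Y}\to\widehat{X}$), so that two components of $\widetilde{Y}$ with the same image in $\widehat{Y}$ are necessarily related by a deck transformation in $\ker f_*$; you would want to say this explicitly.
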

\begin{proof}
The quotient of $\widetilde{X}$ by the action of $\ker f$ is $\widehat{X}$, and the corresponding covering map induces a map $\widetilde{Y}\to\widehat{Y}$.  

We need to show that two path components $\widetilde{Y}_1$ and $\widetilde{Y}_2$ of $\widetilde{Y}$ have the same image under this map if and only if they are in the same orbit of $\ker f$.  The `if' direction is clear. For the converse, we choose compatible basepoints $*_i\in\widetilde{Y}_i$ and suppose that $\widetilde{Y}_1$ and $\widetilde{Y}_2$ have the same image. Then the images of their basepoints in $\widetilde{X}$ are joined by a concatenation of paths $\kappa\cdot\eta$ where $\kappa$ maps to a loop in $\widehat{X}$ and $\eta$ is the image of a lift of a loop from $Y$ to $\widetilde{Y}_2$.  These define group elements $k\in\ker f$ and $y\in \mathrm{Stab}_G(\widetilde{Y}_2)$ so that $gy$ translates $\widetilde{Y}_1$ to $\widetilde{Y}_2$, and so $\widetilde{Y}_1$ and $\widetilde{Y}_2$ are indeed in the same orbit of $\ker f$.

The lemma follows immediately.
\end{proof}

\subsection{Graph pairs}

In the setting of Theorem \ref{thm: GOFGWCEG}, the groups $G$ are finitely generated free groups, so the spaces $X$ can be taken to be graphs.  We may therefore apply the techniques of Stallings \cite{stallings_topology_1983}.   

\begin{definition}
Let $\Gamma$ be a graph.  The \emph{star} of a vertex $v$ is the set $\St_\Gamma(v)=\{e\in E\mid \iota(e)=v\}$, the set of edges with initial vertex $v$.  (We will also write $\St(v)$ for $\St_\Gamma(v)$ when there is no fear of confusion.)  A morphism of graphs $f:\Gamma\to\Delta$ is an \emph{immersion} if the induced maps on stars are injective.  In this case, we write $f:\Gamma\looparrowright\Delta$.
\end{definition}

Stallings famously observed that immersions are $\pi_1$-injective, and that any morphism of finite graphs $\Lambda\to\Gamma$ factors through a canonical immersion
\[
\Lambda\to \Lambda_0\looparrowright\Gamma
\]
where the map $\Lambda\to\Lambda_0$ is a composition of finitely many folds \cite[\S\S 3.3]{stallings_topology_1983}.

\begin{definition}
A \emph{multicycle} in a graph $\Gamma$ is an immersion of graphs $\w:\ess_{\w} \to \Gamma$, where $\ess_{\w}$ is a disjoint union of graphs homeomorphic to circles.  The components of $\ess_\w$ are denoted by $S^1_i$ and the restriction of $\w$ to $S^1_i$ is denoted by $w_i$.

A \emph{graph pair} is a space pair $(\Gamma,\w)$, where $\Gamma$ is a finite graph without vertices of valence one and $\w$ is a multicycle.  Note that we do not require the graph $\Gamma$ to be connected.
\end{definition}

Again, we will need a notion of morphism for graph pairs.  As for Stallings, for us a morphism of graphs takes vertices to vertices and edges to edges.  Since we insist that the maps $\w$ are immersions, we make a corresponding requirement for morphisms of graph pairs.

\begin{definition}
Let  $(\Gamma,\w)$ and $(\Lambda,\uu)$ be graph pairs.  A morphism of space pairs $(\Lambda,\uu)\to (\Gamma,\w)$ is a \emph{morphism of graph pairs} if the map $\Lambda\to \Gamma$ is a morphism of graphs and the map $\ess_\uu\to\ess_\w$ is an immersion.
\end{definition}

The first advantage of this setting is that we can certify $\pi_1$-injective maps using immersions.   Note that a morphism of graphs $\Lambda\to\Gamma$ is an immersion if and only if the lift to universal covers $\widetilde{\Lambda}\to\widetilde{\Gamma}$ is injective.   Similarly, we may define an immersion of graph pairs.

\begin{definition}
A map of graphs pairs $f:(\Lambda,\uu)\to(\Gamma,\w)$ is an \emph{immersion} if the lifts $\widetilde{\Lambda}\to\widetilde{\Gamma}$ and $\widetilde{\ess}_\uu\to\widetilde{\ess}_\w$ are injective.  In this case, we write $f:(\Lambda,\uu)\looparrowright(\Gamma,\w)$.
\end{definition}

A map of graph pairs factors through a canonical immersion, just as maps of graphs do.

\begin{lemma}\label{lem: Folded pair}
A map of graphs pairs $f:(\Lambda,\uu)\to(\Gamma,\w)$ factors through a canonical immersion $f_0:(\Lambda_0,\uu_0)\looparrowright (\Gamma,\w)$.  The immersion $f_0$ has the universal property that, whenever $f$ factors through an immersion $(\Delta,\vv)\looparrowright(\Gamma,\w)$, $f_0$ also factors through $(\Delta,\vv)\looparrowright(\Gamma,\w)$.
\end{lemma}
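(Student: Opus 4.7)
The plan is to reduce to Stallings' classical factorization theorem for graph morphisms, applied to the underlying map $f:\Lambda\to\Gamma$. This produces a canonical factorization
\[
\Lambda \to \Lambda_0 \looparrowright \Gamma
\]
where $\Lambda\to\Lambda_0$ is a finite composition of folds and $\Lambda_0\looparrowright\Gamma$ is an immersion. I would promote this to a factorization of the pair morphism by setting $\ess_{\uu_0}:=\ess_\uu$, taking the multicycle map $\uu_0$ to be the composition $\ess_\uu\xrightarrow{\uu}\Lambda\to\Lambda_0$, and keeping the original map $\phi:\ess_\uu\to\ess_\w$ as the second component of $f_0$.

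The substantive point is to verify that $\uu_0$ is still an immersion, so that $(\Lambda_0,\uu_0)$ really is a graph pair. Suppose it is not: then there exist distinct edges $\alpha_1,\alpha_2$ sharing an initial vertex $\tilde v$ in $\ess_\uu$ with $\uu_0(\alpha_1)=\uu_0(\alpha_2)$. Writing $e_i:=\uu(\alpha_i)$, the fact that $\uu:\ess_\uu\to\Lambda$ is already an immersion forces $e_1\neq e_2$ as edges in the star of $\uu(\tilde v)$ in $\Lambda$; but both map to a common edge of $\Lambda_0$, and hence $f(e_1)=f(e_2)$ in $\Gamma$. Using the intertwining identity $f\circ\uu=\w\circ\phi$, this gives $\w(\phi(\alpha_1))=\w(\phi(\alpha_2))$. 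The edges $\phi(\alpha_1),\phi(\alpha_2)$ lie in the star of $\phi(\tilde v)$ in $\ess_\w$, and because $\phi$ is an immersion (by the definition of a morphism of graph pairs) they must be distinct; but then $\w$ being an immersion forces their images in $\Gamma$ to differ, a contradiction.

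Once $\uu_0$ is known to be an immersion, $f_0:(\Lambda_0,\uu_0)\to(\Gamma,\w)$ is automatically an immersion of graph pairs: $\Lambda_0\looparrowright\Gamma$ by Stallings, and $\ess_{\uu_0}\to\ess_\w$ is the map $\phi$, which was an immersion to start with. The universal property transfers from the classical one: given any factorization $(\Lambda,\uu)\to(\Delta,\vv)\looparrowright(\Gamma,\w)$ through an immersion of graph pairs, Stallings' universality applied to $\Lambda\to\Delta\looparrowright\Gamma$ produces a unique graph morphism $\Lambda_0\to\Delta$ compatible with the maps to $\Gamma$, and the already-supplied map $\ess_\uu=\ess_{\uu_0}\to\ess_\vv$ furnishes the multicycle component; commutativity of the resulting square of pairs is then routine. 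Canonicity of $(\Lambda_0,\uu_0)$ is inherited from that of Stallings' factorization.

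I expect the main obstacle to be exactly the step verifying that folding $\Lambda$ does not destroy the immersion property of the multicycle map, but the short diagram chase above --- exploiting that both $\phi$ and $\w$ are immersions --- dispatches it cleanly, and everything else is bookkeeping on top of the classical theorem.
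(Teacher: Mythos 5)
There is a genuine gap: you keep $\ess_{\uu_0}:=\ess_\uu$ unchanged, but the definition of an immersion of graph pairs is not what you are checking.  By the paper's conventions (see the definition of $\partial$-essential for space pairs), $\widetilde{\ess}_{\uu_0}$ denotes the fibre product $\widetilde{\Lambda}_0\times_{\Lambda_0}\ess_{\uu_0}$, and $\widetilde{\ess}_\w$ denotes $\widetilde{\Gamma}\times_{\Gamma}\ess_\w$; the map $f_0$ is an immersion of pairs only if the induced map $\widetilde{\ess}_{\uu_0}\to\widetilde{\ess}_\w$ is injective.  Since $\widetilde{\Lambda}_0\to\widetilde{\Gamma}$ is injective, this unwinds to the requirement that the natural map $\ess_{\uu_0}\to\Lambda_0\times_\Gamma\ess_\w$ be injective --- i.e.\ that no two distinct points of $\ess_{\uu_0}$ have the same image in both $\Lambda_0$ and $\ess_\w$.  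This is a much stronger condition than $\phi:\ess_{\uu_0}\to\ess_\w$ being an immersion of graphs, and your proposal only verifies the latter.  Concretely: take $\Gamma$ a rose on $\{a,b\}$, $\w$ the single cycle $ab$, and $\Lambda$ a graph carrying two parallel embedded cycles $c_1,c_2$ both reading $ab$ and both sent by $\phi$ to the lone circle of $\ess_\w$.  After folding, $c_1$ and $c_2$ have the same image in $\Lambda_0$ and the same image under $\phi$, so $\ess_{\uu_0}\to\Lambda_0\times_\Gamma\ess_\w$ is $2$-to-$1$ and $f_0$ is not an immersion of pairs.  Your diagram chase showing $\uu_0$ is an immersion of graphs is correct, but it does not address this, and the step asserting that $f_0$ is ``automatically an immersion of graph pairs'' is the false one.

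The fix, which is what the paper actually does, is to fold $\ess_\uu$ alongside $\Lambda$: set $\widetilde{\ess}_{\uu_0}$ to be the \emph{image} of $\widetilde{\ess}_\uu$ in $\widetilde{\ess}_\w$ (and $\widetilde{\Lambda}_0$ the image of $\widetilde{\Lambda}$ in $\widetilde{\Gamma}$), then quotient both by the action of $G:=f_*\pi_1\Lambda$ to obtain $(\Lambda_0,\uu_0)$.  Taking images in this way identifies precisely those components of $\ess_\uu$ that become parallel in $\Lambda_0$ and map identically into $\ess_\w$, which is exactly what your construction misses.  The universal property then falls out of the image description directly, rather than from Stallings' theorem plus an extra argument for the multicycle component.
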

\begin{proof}
Let $G$ be the image of $\pi_1\Lambda$ in $\pi_1\Gamma$.  We take $\widetilde{\Lambda}_0$ to be the image of $\widetilde{\Lambda}$ in $\widetilde{\Gamma}$; likewise, we take  $\widetilde{\ess}_{\uu_0}$ to be the image of $\widetilde{\ess}_\uu$ in $\widetilde{\ess}_\w$. The group $G$ acts naturally on each of these, and we take $\Lambda_0$ and $\ess_{\uu_0}$ to be the respective quotients by the action of $G$.
\end{proof}

The next lemma provides a means of locally certifying that a map is $\partial$-essential.

\begin{lemma}\label{lem: d-essential}
If a map of graph pairs $f:(\Lambda,\uu)\to (\Gamma,\w)$ is $\partial$-essential and $(\Lambda_0,\uu_0)\immerses (\Gamma,\w)$ is the corresponding canonical immersion then the induced map $\ess_\uu\to\ess_{\uu_0}$ is injective. Conversely, if a map of graph pairs $f:(\Lambda,\uu)\to (\Gamma,\w)$ factors through an immersion as
\[
(\Lambda,\uu)\to (\Delta,\vv)\looparrowright(\Gamma,\w)
\]
and $\ess_\uu\to\ess_{\vv}$ is injective then $f$ is $\partial$-essential.
\end{lemma}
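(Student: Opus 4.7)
The plan is to work throughout with the group-pair reformulation of $\partial$-essentiality established in the previous lemma. Write $G=\pi_1\Lambda$, $\curlyA=\pi_0\widetilde{\ess}_\uu$ and $\curlyA'=\pi_0\widetilde{\ess}_\w$. The construction of the canonical immersion (Lemma~\ref{lem: Folded pair}) realises $\curlyA_0:=\pi_0\widetilde{\ess}_{\uu_0}$ as the image of $\curlyA$ in $\curlyA'$, giving a factorisation
\[
\curlyA\twoheadrightarrow\ker f_*\backslash\curlyA\twoheadrightarrow\curlyA_0\hookrightarrow\curlyA',
\]
so $f$ is $\partial$-essential precisely when the middle arrow is a bijection. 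The key technical input I will use repeatedly is that, since every morphism of graph pairs restricts to an immersion of boundary multicycles, each line-component of $\widetilde{\ess}_\uu$ maps homeomorphically onto its image in $\widetilde{\ess}_\w$ (an immersion of a line into a line is injective), and hence also onto its image in any intermediate $\widetilde{\ess}_\vv$.

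For the forward direction, I would assume $f$ is $\partial$-essential, so $\curlyA_0=\ker f_*\backslash\curlyA$. Since $\ker f_*\subseteq G$, this identifies $G\backslash\curlyA$ with $f_*(G)\backslash\curlyA_0$, yielding a bijection between the components of $\ess_\uu$ and of $\ess_{\uu_0}$. Fix $a\in\curlyA$, with corresponding circle $S^1_i\subseteq\ess_\uu$ mapping to $S^1_{j,0}\subseteq\ess_{\uu_0}$, and lift to the $f_*$-equivariant homeomorphism of lines $\widetilde{S}^1_a\xrightarrow{\cong}\widetilde{S}^1_{[a],0}$. Using $\curlyA_0=\ker f_*\backslash\curlyA$, the equation $f_*(g)[a]=[a]$ translates directly to $g\in\ker f_*\cdot\mathrm{Stab}_G(a)$, so $\mathrm{Stab}_{f_*(G)}([a])=f_*\mathrm{Stab}_G(a)$. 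The cyclic stabilisers therefore act by the same translation on the two identified lines, so the induced map of quotient circles $S^1_i\to S^1_{j,0}$ is a homeomorphism. Combined with the bijection on components, this gives injectivity of $\ess_\uu\to\ess_{\uu_0}$.

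For the converse, I would factor $f$ as $(\Lambda,\uu)\xrightarrow{h}(\Delta,\vv)\xrightarrow{g}(\Gamma,\w)$ with $g$ an immersion; then $g_*$ is injective, $\ker f_*=\ker h_*$, and $\pi_0\widetilde{\ess}_\vv\hookrightarrow\curlyA'$, so it suffices to show that any $a,a'\in\curlyA$ with a common image in $\pi_0\widetilde{\ess}_\vv$ lie in the same $\ker h_*$-orbit. Each connected component of $\ess_\uu$ maps onto a unique component of $\ess_\vv$ by a covering map, and injectivity of $\ess_\uu\to\ess_\vv$ forces both (i)~distinct source components to hit distinct targets, and (ii)~each such covering to have degree one. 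Condition~(i) gives $g\in G$ with $a'=g\cdot a$ and hence $h_*(g)\in\mathrm{Stab}_{\pi_1\Delta}([a])$; condition~(ii), combined with the translation-length calculation of the forward direction, forces $h_*\colon\mathrm{Stab}_G(a)\twoheadrightarrow\mathrm{Stab}_{\pi_1\Delta}([a])$. Choosing $c\in\mathrm{Stab}_G(a)$ with $h_*(c)=h_*(g)$ yields $gc^{-1}\in\ker h_*$, so $a'=(gc^{-1})\cdot a\in\ker f_*\cdot a$.

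The main technical obstacle I anticipate is the translation-length bookkeeping that pins down the precise degree of each boundary-circle covering: once the stabiliser identity $\mathrm{Stab}_{f_*(G)}([a])=f_*\mathrm{Stab}_G(a)$ is established in the forward direction and extracted from circle-level injectivity in the converse, the rest reduces to routine manipulation of $\ker f_*$-orbits on $\curlyA$.
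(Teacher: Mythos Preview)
Your argument is correct, but it takes a different route from the paper's. You translate everything into the group-pair language and then do explicit stabiliser bookkeeping: in the forward direction you identify $\curlyA_0$ with $\ker f_*\backslash\curlyA$, deduce a bijection on $\pi_0$, and then compare translation lengths to see that each circle maps by a degree-one cover; in the converse you reverse this, extracting the stabiliser surjection $h_*\colon\Stab_G(a)\twoheadrightarrow\Stab_{\pi_1\Delta}([a])$ from the degree-one condition and then chasing orbits of $\ker h_*$.

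The paper instead stays at the level of covering spaces and avoids stabilisers entirely. For the forward direction it observes that $\partial$-essentiality makes $\widehat{\ess}_\uu\to\widetilde{\ess}_\w$ injective on $\pi_0$, hence (since every component is a line and the map is an immersion) globally injective; the factorisation through $\widetilde{\ess}_{\uu_0}$ is then injective, and quotienting both sides by the free $f_*(\pi_1\Lambda)$-action preserves injectivity. For the converse it notes that $\widehat{\ess}_\uu\to\widetilde{\ess}_\vv$ is literally the pullback $\id\times\phi$ of the injection $\phi\colon\ess_\uu\hookrightarrow\ess_\vv$, hence injective, and $\widetilde{\ess}_\vv\to\widetilde{\ess}_\w$ is injective as a lift of an immersion. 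Your approach has the virtue of making the circle-by-circle picture explicit and of exercising the group-pair dictionary; the paper's is shorter, sidestepping the translation-length computation by using the single observation that an equivariant injection descends to an injection of quotients when the action on the target is free.
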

\begin{proof}
By definition, if $f$ is $\partial$-essential then the corresponding map $\widehat{\ess}_{\uu}\to \widetilde{\ess}_\w$ is injective on $\pi_0$. The components of these spaces are lines, so the map $\widehat{\ess}_{\uu}\to \widetilde{\ess}_\w$ is itself injective, and so $\widehat{\ess}_{\uu}\to \widetilde{\ess}_{\uu_0}$ is injective too.  Finally, since $\ess_{\uu}\to\ess_{\uu_0}$ is obtained by quotienting the domain and the range by $f_*\pi_1\Lambda$, it is also injective.

For the converse, if $f$ factors as hypothesized, then the lift of $\ess_\uu\to\ess_\w$ factors as
\[
\widehat{\ess}_\uu\to\widetilde{\ess}_{\vv}\to\widetilde{\ess}_\w~.
\]
The first map is a lift of an injection, hence an injection, and the second map is a lift of an immersion, hence injective.  The result follows since a composition of injective maps is injective.
\end{proof}

Thus, we can use a map to an immersed pair as a certificate that a morphism of pairs is $\partial$-essential.  We call the data of this certificate a \emph{$\partial$-immersion}.

\begin{definition}\label{defn: d-immersion}
A \emph{$\partial$-immersion} is a concatenation
\[
(\Lambda,\uu)\to (\Delta,\vv)\immerses(\Gamma,\w)
\]
where $\ess_\uu\to\ess_{\vv}$ is bijective.
\end{definition}

\section{Whitehead graphs and folds}\label{sec: Whitehead graphs}

Given a group pair $(F,\langle w\rangle)$, where $w$ is some non-trivial element of a free group $F$, it is natural to ask ask whether or not $\langle w\rangle$ is a free factor of $F$. The standard way of answering this question uses the Whitehead graph, which was defined by J. H. C.  Whitehead in his original paper on automorphisms of free groups  \cite{whitehead_equivalent_1936}.  (See also \cite{cashen_line_2011} and the references therein for a modern account of Whitehead graphs.)  The definition of Whitehead graph given in \cite{whitehead_equivalent_1936}  implicitly involves representing $F$ as the fundamental group of a rose -- a graph with a single vertex.  

Here, we develop the theory of Whitehead graphs for general graph pairs.  We are not aware that this approach has been taken in the literature before, but it is similar to the approaches to Whitehead graphs given by Cashen--Macura \cite{cashen_line_2011} and Manning \cite{manning_virtually_2010}.

\begin{figure}[h]
\begin{center}
 \centering \def\svgwidth{300pt}
 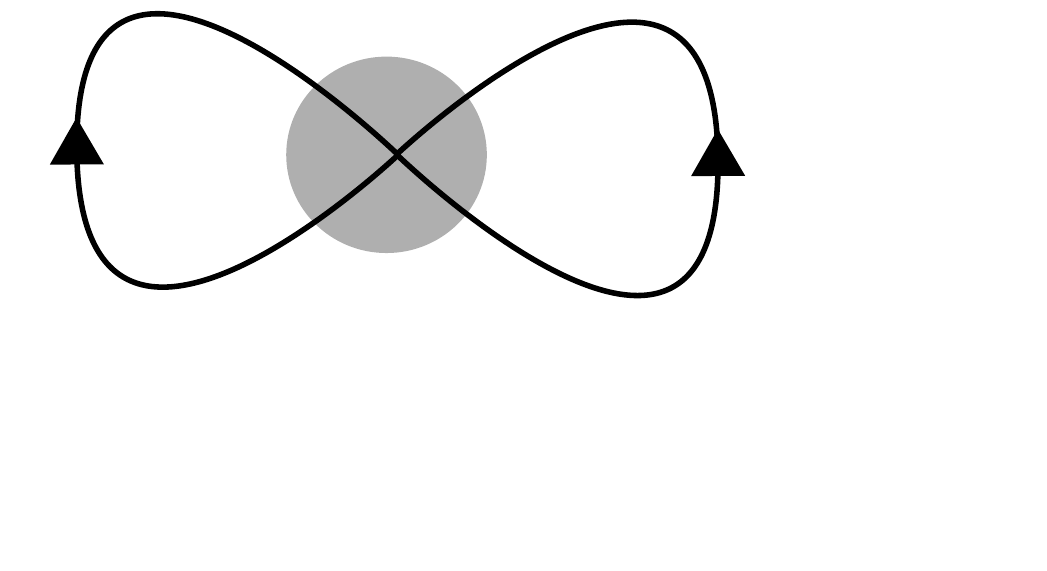
 \caption{The Whitehead graph of the Baumslag--Solitar word $b^{-1}aba^{-2}$.}
\end{center}
\end{figure}

\begin{definition}
Consider a graph pair $(\Gamma,\w)$ and a vertex $x$ of $\Gamma$. The \emph{Whitehead graph} at $x$ is denoted by $\Wh_x(\w)$.  Its set of vertices is the star $\St_\Gamma(x)$.  The unoriented edges of $\Wh_x(\w)$ are the vertices $\{x_i\}$ of $\ess_\w$ that map to $x$; the edge corresponding to $x_i$ joins the vertices $\w(e_1)$ and $\w(e_2)$ of $\Wh_x(\w)$, where $e_1$ and $e_2$ are the two edges of $\ess_\w$ with $\iota(e_j)=x_i$.
\end{definition}

Note that the requirement that the multicycle $\w$ is an immersion implies that the endpoints of any edge of $\Wh_x(\w)$ are distinct.  However, each pair of vertices may be joined by many edges.

We can collect together all the Whitehead graphs at the vertices of $\Gamma$ into a global Whitehead graph for the pair $(\Gamma,\w)$.

\begin{definition}
The \emph{Whitehead graph} of the pair $(\Gamma,\w)$ is the disjoint union
\[
\Wh(\w):=\coprod_{x\in V}\Wh_x(\w)~.
\]
Note that $\Wh(\w)$ comes equipped with two additional structures:
\begin{enumerate}[(i)]
\item the components of $\Wh(\w)$ are naturally partitioned: two components are equivalent if they are both components of some $\Wh_x(\w)$;
\item the fixed-point free involution $e\mapsto\bar{e}$ on the edges of $\Gamma$ defines a fixed-point free involution on the vertices of $\Wh(\w)$ that extends to a bijection $i_e:\St(e)\to\St(\bar{e})$.
\end{enumerate}
We will always think of $\Wh(\w)$ as equipped with these extra structures.
\end{definition}

\begin{remark}\label{rem: Whitehead graphs determine pairs}
The partition on the components of $\Wh_x(\w)$ and the involutions $i_e$ are enough information to reconstruct the pair $(\Gamma,\w)$.
\end{remark}

Stallings studied morphisms of graphs by observing that they always factor as a composition of a sequence of \emph{folds} followed by an immersion.  Recall that a fold identifies a pair of edges $e_1,e_2$ with $\iota(e_1)=\iota(e_2)$.  It is therefore natural to study the effect that a fold has on Whitehead graphs.

\begin{definition}
Let $W$ be a graph and $v_1,v_2$ a pair of vertices.  (When we apply this, $W$ will be a disjoint union of Whitehead graphs.) A \emph{wedge} is a quotient map $W\to W'$ that identifies $v_1$ and $v_2$ and leaves the rest of $W$ unchanged.  We write $W\wedge_{v_1\sim v_2}$ for the quotient graph $W'$.  If $W=W_1\sqcup W_2$ with $v_i\in W_i$ then we write $W_1\wedge_{v_1\sim v_2} W_2$ for $W\wedge_{v_1\sim v_2}$.  The reverse move, which replaces $W\wedge_{v_1\sim v_2}$ by $W$, is called an \emph{unwedge}. 
\end{definition}

The following lemma shows that, at the level of Whitehead graphs, folds correspond to wedges.

\begin{figure}[h]
\begin{center}
 \centering \def\svgwidth{300pt}
 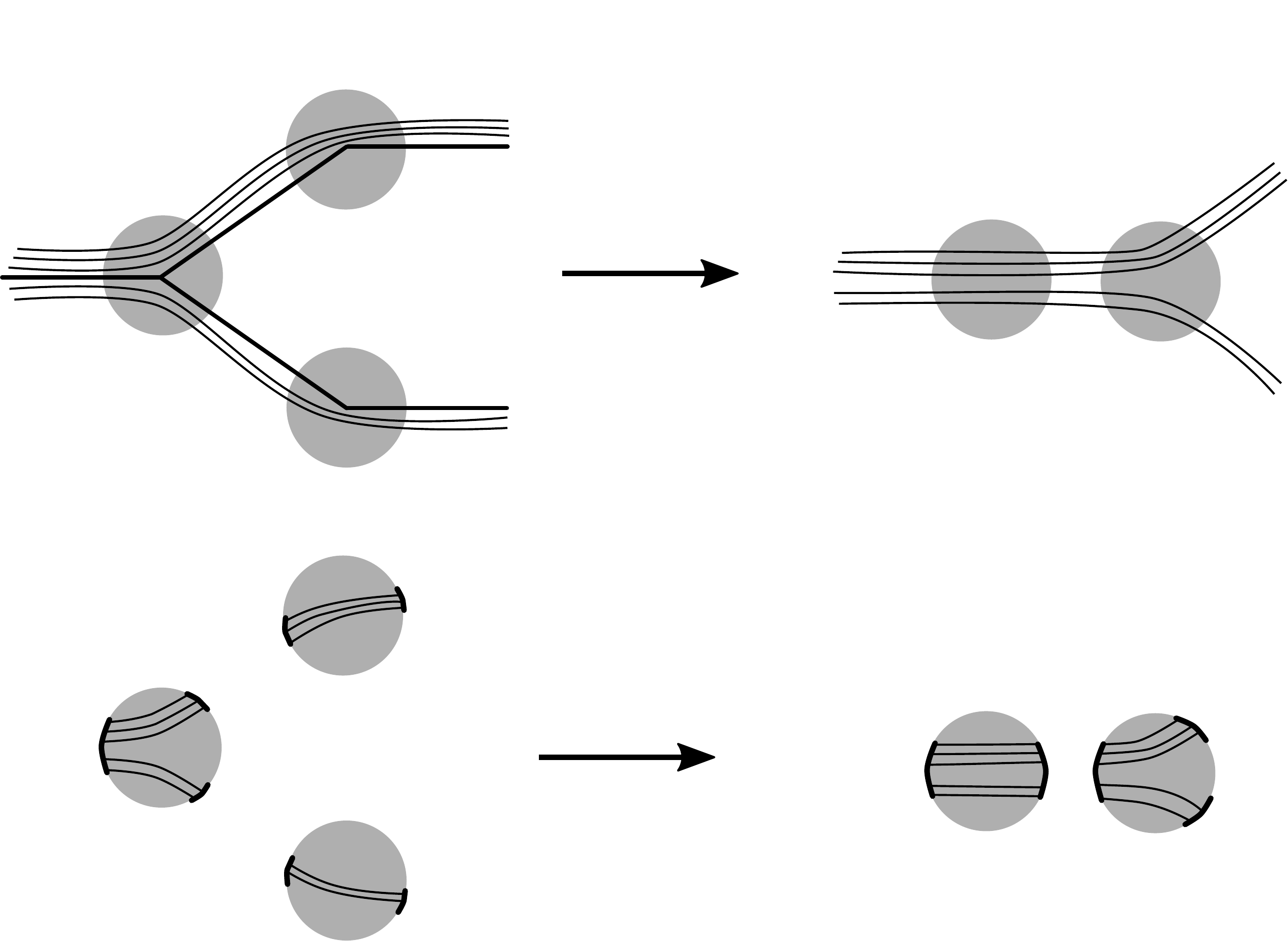
 \caption{The effect of a fold on Whitehead graphs.  Note that $\Wh_{x'}(\w')$ is obtained by wedging $\Wh_x(\w)$, and $\Wh_{y'}(\w')$ is obtained by wedging $\Wh_{y_1}(\w)\sqcup\Wh_{y_2}(\w)$.}
  \label{fig: Fold Whitehead}
\end{center}
\end{figure}

%Commented out the old version of the lemma.

\iffalse
\begin{lemma}
Let $f:(\Gamma,\w)\to(\Gamma',\w')$ be the morphism given by a fold $\Gamma\to\Gamma'$ which identifies two edges $e_1,e_2$ of $\Gamma$ with a common initial vertex. Suppose further that $f:\Gamma\to\Gamma'$ is a homotopy equivalence, i.e.\ $\tau(e_1)\neq\tau(e_2)$.  Let $x=\iota(e_1)=\iota(e_2)$, let $y_i=\iota(\bar{e}_i)$ and let $x',y'$ and $e'$ be the image vertices and edges in $\Gamma'$.  If $x$ is distinct from $y_1,y_2$ then:
\begin{enumerate}[(i)]
\item $\Wh_{x'}(\w')=\Wh_x(\w)\wedge_{e_1\sim e_2}$; and
\item if $y_1\neq y_2$ then $\Wh_{y'}(\w')=\Wh_{y_1}(\w)\wedge_{\bar{e}_1\sim \bar{e}_2}\Wh_{y_2}(\w)$.
\end{enumerate}
If $x=y_1$, say, then $x'=y'$ and, writing $y=y_2$, we have:
\begin{enumerate}[(i)]
\item $\Wh_{x'}(\w')=\Wh_x(\w)\wedge_{e_1\sim e_2}$; and
\item if $y_1\neq y_2$ then $\Wh_{y'}(\w')=\Wh_{y_1}(\w)\wedge_{\bar{e}_1\sim \bar{e}_2}\Wh_{y_2}(\w)$.
\end{enumerate}
\end{lemma}
\fi

\begin{lemma}\label{lem: Folds give splits}
Let $f:(\Gamma,\w)\to(\Gamma',\w')$ be the morphism given by a fold $\Gamma\to\Gamma'$, which identifies two edges $e_1,e_2$ of $\Gamma$ with a common initial vertex $x=\iota(e_1)=\iota(e_2)$ to an edge $e'$ of $\Gamma'$. Let $y_i=\iota(\bar{e}_i)$ and let $x'=\iota(e')$ and $y'=\tau(e')$.  Suppose further that $f:\Gamma\to\Gamma'$ is a homotopy equivalence, i.e.\ $y_1\neq y_2$.  Then
\[
\Wh_{x'}(\w')\cup\Wh_{y'}(\w')=(\Wh_{x}(\w)\wedge_{e_1\sim e_2})\cup (\Wh_{y_1}(\w)\wedge_{\bar{e}_1\sim\bar{e}_2}\Wh_{y_2}(\w))~.
\]
(Note that the unions in this expression may not be disjoint, since $x$ may equal $y_i$ for at most one $i$, in which case $x'$ also equals $y'$).  In particular, $\bar{e}'$ is a cut vertex of $\Wh_{y'}(\w')$.
\end{lemma}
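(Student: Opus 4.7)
The plan is to track how the vertex sets and edge sets of Whitehead graphs transform under the fold $f$. The fold does not alter the domain $\ess_\w$ of the multicycle; I identify $\ess_{\w'}$ with $\ess_\w$ and take $\w'=f\circ\w$. Thus the edges of $\Wh_{z'}(\w')$ at a vertex $z'$ of $\Gamma'$ correspond bijectively to the vertices of $\ess_\w$ lying over $f^{-1}(z')$. Since $f$ identifies only $e_1,e_2$ (and correspondingly $\bar{e}_1,\bar{e}_2$) and is a homotopy equivalence, $f^{-1}(x')=\{x\}$ and $f^{-1}(y')=\{y_1,y_2\}$ in the generic case $x\notin\{y_1,y_2\}$; in the exceptional case $x=y_1$, say, one has $x'=y'$ and $f^{-1}(x')=\{x,y_2\}$, which accounts for the possible non-disjointness of the two unions appearing in the statement.

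Next I will identify the vertex sets. The star $\St_{\Gamma'}(x')$ is naturally the quotient of $\St_\Gamma(x)$ by the relation $e_1\sim e_2$, since the fold merges only this pair of edges incident at $x$. Similarly, because $y_1\neq y_2$, the star $\St_{\Gamma'}(y')$ is identified with the quotient of $\St_\Gamma(y_1)\sqcup\St_\Gamma(y_2)$ by $\bar{e}_1\sim\bar{e}_2$. Consequently each edge of $\Wh_x(\w)$ becomes an edge of $\Wh_{x'}(\w')$ whose endpoints are the images of the original endpoints under the wedge, and each edge of $\Wh_{y_i}(\w)$ contributes an edge of $\Wh_{y'}(\w')$ with endpoints identified across the two pieces via $\bar{e}_1\sim\bar{e}_2$. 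Combining the edge bijection with the vertex identifications gives precisely
\[
\Wh_{x'}(\w')\cup\Wh_{y'}(\w')=\bigl(\Wh_{x}(\w)\wedge_{e_1\sim e_2}\bigr)\cup\bigl(\Wh_{y_1}(\w)\wedge_{\bar{e}_1\sim\bar{e}_2}\Wh_{y_2}(\w)\bigr).
\]

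For the cut-vertex claim, the vertex $\bar{e}'$ of $\Wh_{y'}(\w')$ is by construction the wedge point identifying $\bar{e}_1\in\Wh_{y_1}(\w)$ with $\bar{e}_2\in\Wh_{y_2}(\w)$. Since $y_1\neq y_2$, the two Whitehead graphs $\Wh_{y_1}(\w)$ and $\Wh_{y_2}(\w)$ are disjoint, so in the wedge they meet only at $\bar{e}'$; removing $\bar{e}'$ and its incident edges therefore separates the images of $\Wh_{y_1}(\w)$ and $\Wh_{y_2}(\w)$, exhibiting $\bar{e}'$ as a cut vertex of $\Wh_{y'}(\w')$.

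The main obstacle I anticipate is the bookkeeping in the non-generic case where $x=y_i$ for some (necessarily unique) $i$, so that $x'=y'$ and the two wedges occur simultaneously in the single Whitehead graph at that vertex. Handling this case carefully is what forces the use of the set-theoretic union in the statement rather than a disjoint union, and it must also be checked that the cut-vertex argument still goes through when the wedge identifying $e_1\sim e_2$ is performed on the same side as the wedge identifying $\bar{e}_1\sim\bar{e}_2$.
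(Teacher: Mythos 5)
Your proof is a fleshed-out version of what the paper treats as immediate; the paper's own proof simply says the result follows from the definitions and illustrates the generic case $x\notin\{y_1,y_2\}$ with a picture. Your identification of the star of $x'$ with $\St_\Gamma(x)/(e_1\sim e_2)$ and of the star of $y'$ with $(\St_\Gamma(y_1)\sqcup\St_\Gamma(y_2))/(\bar e_1\sim\bar e_2)$ (these two stars being taken together modulo the overlap when $x=y_i$), together with the observation that $\ess_\w$ and its edge set are untouched by the fold, is exactly the bookkeeping that makes the displayed equality of Whitehead graphs hold.

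Two points you should tighten up. First, for $\bar e'$ to be a cut vertex in the sense used in Lemma~\ref{lem: Whitehead's lemma} you need $\Wh_{y_1}(\w)\smallsetminus\{\bar e_1\}$ and $\Wh_{y_2}(\w)\smallsetminus\{\bar e_2\}$ to both be non-empty; this is where the standing hypothesis that a graph pair has no valence-one vertices enters (each $y_i$ has at least one edge in its star other than $\bar e_i$). Second, you flag but do not actually discharge the exceptional case $x=y_i$. It does go through: say $x=y_1\neq y_2$. Then $x'=y'$ and
\[
\Wh_{x'}(\w')=\bigl(\Wh_x(\w)\wedge_{e_1\sim e_2}\bigr)\wedge_{\bar e_1\sim\bar e_2}\Wh_{y_2}(\w),
\]
where one checks that $e_1,e_2,\bar e_1$ are pairwise distinct vertices of $\St_\Gamma(x)$ (in particular $e_2\neq\bar e_1$, since $e_2=\bar e_1$ would force $y_2=x=y_1$). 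Hence $\bar e'$ is the unique vertex shared by the image of $\Wh_{y_2}(\w)$ and the image of $\Wh_x(\w)\wedge_{e_1\sim e_2}$, both of which retain at least one further vertex after deleting $\bar e'$ (for $\Wh_x(\w)\wedge_{e_1\sim e_2}$ the vertex $e'$ survives), so $\bar e'$ is still a cut vertex.
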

\begin{proof}
This follows immediately from the definitions. (The case in which $x\neq y_1,y_2$ is illustrated in Figure \ref{fig: Fold Whitehead}).
\end{proof}

\begin{remark}
In the above lemma, the hypothesis that the map $f:(\Gamma,\w)\to(\Gamma',\w')$ is a  \emph{morphism of pairs} is essential: if the induced map $\ess_\w\to\ess_{\w'}$ were not an immersion, then after folding one would need to tighten $\w'$ to an immersion, which might destroy the cut point structure of the Whitehead graphs.
\end{remark}

\begin{remark}\label{rem: Wedges are injective on oriented edges}
In the setting of Lemma \ref{lem: Folds give splits}, for any vertex $v$ of $\Gamma$, the map $\Wh_v(\w)\to\Wh_{f(v)}(\w')$ induced by $f$ is injective on edges.
\end{remark}

In fact, the implication of Lemma \ref{lem: Folds give splits} can be reversed: if one of the Whitehead graphs has a cut vertex then we can unfold.

\begin{lemma}\label{lem: Splits give folds}
Let $(\Gamma',\w')$ be a graph pair, and suppose that some edge $\bar{e}'$ defines a cut vertex in $\Wh_{y'}(\w')$ (where $y'=\tau(e')$).  Then there is a graph pair $(\Gamma,\w)$ and a morphism of pairs defining a homotopy-equivalent fold $f:(\Gamma,\w)\to(\Gamma',\w')$ that identifies a pair of edges $e_1,e_2$ to $e'$.
\end{lemma}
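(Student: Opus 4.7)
The plan is to invert the construction of Lemma \ref{lem: Folds give splits}, using Remark \ref{rem: Whitehead graphs determine pairs} to specify $(\Gamma,\w)$ directly in terms of Whitehead-graph data. Write $x' := \iota(e')$. Using the cut-vertex hypothesis, I first decompose $\Wh_{y'}(\w') = A \cup B$ with $A \cap B = \{\bar{e}'\}$ and each of $A \setminus \{\bar{e}'\}$, $B \setminus \{\bar{e}'\}$ non-empty: partition the components of $\Wh_{y'}(\w') \setminus \bar{e}'$ into two non-empty parts, and reattach $\bar{e}'$ together with its incident edges on each side. (Components of $\Wh_{y'}(\w')$ not containing $\bar{e}'$ may be placed on either side.)

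Next, I build $\Gamma$ by removing the edge pair $\{e',\bar{e}'\}$ and the vertex $y'$ from $\Gamma'$ and introducing in their place two new distinct edges $e_1,e_2$ (with reverses $\bar{e}_1,\bar{e}_2$) and two new distinct vertices $y_1,y_2$, declaring $\iota(e_i) := x'$ and $\tau(e_i) := y_i$. The retained vertex $x := x'$ of $\Gamma$ plays the role of the common origin. Every edge of $\Gamma'$ that was originally incident at $y'$, i.e.\ every vertex of $\Wh_{y'}(\w') \setminus \bar{e}'$, is redirected to originate at $y_1$ or $y_2$ according to the side in which it lies. The Whitehead-graph data of the new pair is now forced: set $\Wh_{y_i}(\w) := A$ (resp.\ $B$) with $\bar{e}'$ relabelled $\bar{e}_i$, and set $\Wh_x(\w)$ to be the unwedging of $\Wh_{x'}(\w')$ which replaces the vertex $e'$ by two vertices $e_1,e_2$ and distributes the edges of $\Wh_{x'}(\w')$ incident at $e'$ between them using the natural bijection with the edges of $\Wh_{y'}(\w')$ at $\bar{e}'$ (each edge of $\ess_{\w'}$ mapping to $e'$ contributes one endpoint to each of these graphs): such an edge is attached to $e_i$ precisely when its bijection-partner at $\bar{e}'$ lies in $A$ (resp.\ $B$). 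All remaining Whitehead graphs and edge involutions are inherited from $(\Gamma',\w')$, with $e_i \leftrightarrow \bar{e}_i$ replacing $e' \leftrightarrow \bar{e}'$. Applying Remark \ref{rem: Whitehead graphs determine pairs} produces the pair $(\Gamma,\w)$, and the map $f:(\Gamma,\w) \to (\Gamma',\w')$ that identifies $\{y_1,y_2\}$ to $y'$ and $\{e_1,e_2\}$ to $e'$ is a morphism of pairs by construction.

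Finally, I would verify that $f$ is a homotopy-equivalent fold and that $(\Gamma,\w)$ is a valid graph pair. The map $f$ identifies exactly the edges $e_1,e_2$, which share the initial vertex $x$, and hence is a fold; it is a homotopy equivalence because $y_1 \neq y_2$ by construction. The multicycle $\w$ is an immersion because every edge of the new Whitehead graphs has distinct endpoints, being inherited (after the relabelling $\bar{e}' \mapsto \bar{e}_i$) from either $\Wh_{y'}(\w')$ or $\Wh_{x'}(\w')$, in both of which the property already held. The graph $\Gamma$ has no valence-one vertex, since $\deg(y_i) = |V(A)|$ or $|V(B)|$ is at least two by the non-emptiness condition, and all other valences match those of $\Gamma'$. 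The main bookkeeping step, and the only mildly subtle point, is verifying that the distribution of edges at $e'$ between $e_1$ and $e_2$ is well-defined: this reduces to the observation that each edge of $\ess_{\w'}$ mapping to $e'$ has a unique terminus at $y'$, and hence a uniquely determined side of the $A,B$ partition. The degenerate case $x' = y'$ (i.e.\ $e'$ is a loop) is handled by choosing $x := y_1$ and proceeding identically.
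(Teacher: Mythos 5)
Your proposal takes essentially the same route as the paper: decompose $\Wh_{y'}(\w')$ at the cut vertex $\bar{e}'$, reconstruct $(\Gamma,\w)$ from the resulting Whitehead-graph data via Remark~\ref{rem: Whitehead graphs determine pairs}, unwedge $\Wh_{x'}(\w')$ at $e'$ using the natural bijection with the star of $\bar{e}'$, and observe the resulting morphism is a homotopy-equivalent fold. Your added verifications (no valence-one vertices, the new multicycle remains an immersion, the edge distribution at $e'$ is well defined) are worthwhile and correct.

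One caveat: the remark that the degenerate case $x'=y'$ is ``handled by choosing $x:=y_1$ and proceeding identically'' is too terse to be literally true. When $x'=y'$, the graph $\Wh_{x'}(\w')$ \emph{is} $\Wh_{y'}(\w')$, and the vertex $e'$ lies in one of the two sides of the decomposition at $\bar{e}'$ (say $A$). Following your general recipe verbatim would have you set $\Wh_x(\w)$ to be both $A$ (relabelled) and the unwedge of all of $\Wh_{x'}(\w')$ at $e'$, which are not the same thing. The correct construction, as in the paper, is a double unwedge: first split $\Wh_{y'}(\w')$ at $\bar{e}'$ into $A$ and $B$, then further unwedge $A$ at $e'$ into $e_1,e_2$, taking $\Wh_x(\w)$ to be this final graph and $\Wh_y(\w)=B$. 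The resulting $\Gamma$ has only two new vertices $x,y$ rather than three, and $e_1$ becomes a loop at $x$. The idea is the one you intend, but the phrasing should make explicit that the unwedge at $e'$ is performed inside the side of the cut containing $e'$.
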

\begin{proof}
The hypothesis tells us that $\Wh_{y'}(\w')=W_1\wedge_{\bar{e}_1\sim\bar{e}_2}W_2$, where $\bar{e}_i\in W_i$ is a vertex with image $\bar{e}'$ in the wedge.   Let $x'=\iota(e')$ (and note that $x'$ and $y'$ are not necessarily distinct).  We will define the pair $(\Gamma,\w)$ via its Whitehead graphs (appealing to Remark \ref{rem: Whitehead graphs determine pairs}). The proof divides into two similar cases, depending on whether or not $x'=y'$.

Suppose first that $x'\neq y'$.  For any vertex $z'$ of $\Gamma'$ not equal to $x'$ or $y'$, we take a vertex $z$ for $\Gamma$ with Whitehead graph isomorphic to $\Wh_{z'}(\w')$.  The remaining vertices of $\Gamma$ are denoted by $x,y_1,y_2$.   We define $\Wh_{y_i}(\w)$ to be $W_i$ for $i=1,2$.  Finally, $\Wh_x(\w)$ is defined so that
\[
\Wh_{x'}(\w')=\Wh_x(\w)\wedge_{e_1\sim e_2}~.
\]
That is, $\Wh_x(\w)$ is obtained from $\Wh_{x'}(\w')$ by dividing the vertex $e'$ into two vertices, $e_1,e_2$. The edges of $\Wh_{x}(\w)$ incident at $e_1$ and $e_2$ are defined so that they respect the natural bijections between the stars of the $e_i$ and the stars of the $\bar{e}_i$. There is then a natural lift of the bijections on stars in $\Wh(\w')$ to bijections on stars in $\Wh(\w)$, and this completes the construction of $(\Gamma,\w)$.

The case in which $x'=y'$ is similar.  Again, for any vertex $z'$ of $\Gamma'$ not equal to $x'$ or $y'$, we take a vertex $z$ for $\Gamma$ with Whitehead graph isomorphic to $\Wh_{z'}(\w')$.  The remaining vertices of $\Gamma$ are denoted by $x,y$.  Since $x'=y'$, the vertex $e'$ is contained in $\Wh_{y'}(\w')=W_1\wedge_{\bar{e}_1\sim \bar{e}_2} W_2$, and without loss of generality we may take $e'\in W_1$.  We now define $\Wh_y(\w)=W_2$, and $\Wh_x(\w)$ so that
\[
W_1 = \Wh_{x}(\w)\wedge_{e_1\sim e_2}~.
\]
As before, this means that $\Wh_{x}(\w)$ is obtained by dividing the vertex $e'$ into two vertices, $e_1,e_2$, and the edges of $\Wh_x(\w)$ incident at the $e_i$ are defined to respect the natural bijections between the stars of the $e_i$ and the stars of the $\bar{e}_i$.  Again, this completes the construction of $(\Gamma,\w)$.

In either case, identifying $e_1$ and $e_2$ defines a fold $(\Gamma,\w)\to (\Gamma',\w')$.  Since $\tau(e_2)\neq \tau(e_1)$, the fold is a homotopy equivalence.  Note also that, by construction, the fold is a morphism of pairs.
 \end{proof}

Whitehead introduced Whitehead graphs to recognize basis elements of free groups and, more generally, free splittings. (More generally still, Whitehead gave an algorithm to find the shortest element in an orbit of the automorphism group.)   We will use Whitehead graphs to recognize \emph{(weakly) irreducible} pairs.

\begin{definition}\label{defn: Irreducible pair}
Consider a graph pair $(\Gamma,\w)$ with $\Gamma$ a finite connected graph.  By Grushko's theorem, $\pi_1\Gamma$ splits canonically as
\[
G_1*\ldots*G_k*F
\]
where: for each $j=1,\ldots,k$, there is an index set $I_j$ so that $w_i$ is conjugate into $G_j$ for all $i\in I_j$; each $G_j$ does not split freely relative to the set $\{w_i\mid i\in I_j\}$; and no $w_i$ is conjugate into $F$.  A factor $G_j$ is called \emph{cyclic} if  $I_j$ is a singleton $\{i\}$ and, up to conjugacy, $w_{i}$ generators $G_j$.

The pair $(\Gamma,\w)$ is called \emph{weakly irreducible} if there are  no cyclic factors; otherwise it is called \emph{strongly reducible}. The pair  is called {reducible} if it is weakly irreducible and $k=1$; otherwise it is called \emph{reducible}.  When $\Gamma$ is disconnected, the pair $(\Gamma,\w)$ is called (weakly) irreducible or (strongly) reducible if and only if each component has that property.
\end{definition}

The point of the above definition is that the pair $(\Gamma,\w)$ is reducible if and only if the fundamental group of the double $D(\w)$, obtained as a graph of spaces with two vertex spaces homeomorphic to $\Gamma$ and edge maps given by $\w$, admits a non-trivial free splitting.  Cyclic factors are relevant because they give rise to $\mathbb{Z}$ factors of the double. Even a single cyclic factor is reducible, since $\Z$ splits as an HNN extension of the trivial group.

The following lemma is the key result for recognizing reducible pairs.  It is quite standard, but we give a proof using folds and wedges as a sample application of the above ideas.

\begin{lemma}[Whitehead]\label{lem: Whitehead's lemma}
If $(\Gamma,\w)$ is reducible then there is a vertex $x$ of $\Gamma$ so that one of the following holds:
\begin{enumerate}[(i)]
\item $\Wh_x(\w)$ is disconnected;
\item $\Wh_x(\w)$ has a leaf, i.e.\ a vertex of valence 1; or
\item $\Wh_x(\w)$ has a cut vertex, i.e.\ a vertex $e$ so that $\Wh_x(\w)\smallsetminus\{e\}$ is disconnected.
\end{enumerate}
\end{lemma}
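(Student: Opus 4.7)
My plan is to construct a simpler graph pair $(\Gamma_0,\w_0)$ that makes the reducibility of $(\Gamma,\w)$ visible in its Whitehead graph at a distinguished vertex, and then transfer this pathology to $(\Gamma,\w)$ via Stallings folding, using Lemma \ref{lem: Folds give splits} as the key transfer mechanism.

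Step one (build a model). Since $(\Gamma,\w)$ is reducible, the Grushko decomposition of $\pi_1\Gamma$ relative to $\w$ (Definition \ref{defn: Irreducible pair}) either has $k\geq 2$ factors $G_j$ each containing components of $\w$, or admits a cyclic factor $\langle w_i\rangle$ (or both). I would take $\Gamma_0$ to be a wedge at a single vertex $*$ of graphs $\Gamma_j$ representing each $G_j$ as a single-vertex rose (so that every transition of each $w_i$ passes through $*$), with a cyclic factor $G_j$ represented by a single loop, together with additional loops at $*$ for the free complement $F$; arrange $\w_0$ so that each of its components lies entirely in the piece corresponding to its Grushko factor. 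There is then a natural morphism of graph pairs $f\colon (\Gamma_0,\w_0)\to(\Gamma,\w)$ that is a homotopy equivalence on the underlying graphs.

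Step two (read the pathology off the model). Because each component of $\w_0$ lies in a single piece, every transition through $*$ stays within that piece, so $\Wh_*(\w_0)$ decomposes as a disjoint union indexed by the pieces. If $k\geq 2$ then at least two pieces contribute Whitehead edges and $\Wh_*(\w_0)$ is disconnected, giving (i). Otherwise reducibility forces a cyclic factor $\langle w_i\rangle$: when $F$ is non-trivial, its loops contribute isolated vertices and disconnectedness (i) still holds; and when $F$ is trivial, $\Gamma_0$ is a single loop whose Whitehead graph is one edge on two vertices, each of valence $1$, giving (ii). In every reducible case, some Whitehead graph of $(\Gamma_0,\w_0)$ satisfies (i) or (ii).

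Step three (transfer to $(\Gamma,\w)$). By Stallings folding, the homotopy equivalence $f$ factors as a sequence of homotopy-equivalent folds
\[
(\Gamma_0,\w_0)\to(\Gamma_1,\w_1)\to\cdots\to(\Gamma_n,\w_n)=(\Gamma,\w),
\]
each of which is automatically a morphism of graph pairs: any backtracking introduced into an intermediate $\w_i$ by a fold would persist under the subsequent maps and contradict the hypothesis that $\w$ is an immersion. If $n=0$, the pathology observed in Step two already lives in $(\Gamma,\w)$. Otherwise, the final fold is a homotopy-equivalent morphism of pairs, so Lemma \ref{lem: Folds give splits} produces a cut vertex in the appropriate Whitehead graph of $(\Gamma,\w)$, establishing (iii). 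The main obstacle I anticipate is justifying that the Stallings factorization can be performed within the category of graph pairs; the intuition above gives the right picture, but making it airtight may require inducting on or reordering the folds so that every intermediate $\w_i$ remains an immersion.
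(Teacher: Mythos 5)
Your proposal is correct and follows the same strategy as the paper's proof: build a homotopy-equivalent model pair in which the reducibility is visible at a single vertex, factor the morphism as a sequence of homotopy-equivalent folds, and apply Lemma \ref{lem: Folds give splits} to the final fold to produce a cut vertex. You are somewhat more explicit than the paper at two points: you construct the model $(\Gamma_0,\w_0)$ concretely from the Grushko decomposition and absorb the cyclic case into the same framework (the paper just asserts the existence of $(\Gamma',\w')$ with disconnected Whitehead graph and leaves the cyclic case as an exercise); and you flag and correctly resolve the worry that the intermediate multicycles must remain immersions --- your forward-composition argument (a backtrack in $\w_i$ would persist under the graph morphism $\Gamma_i\to\Gamma$, contradicting that $\w$ is immersed) is exactly right and is a genuine subtlety the paper glosses over. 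Two small imprecisions worth tidying: the sentence ``reducibility forces a cyclic factor'' is not quite accurate when $k=1$ and $F\neq 1$ but $G_1$ is non-cyclic, though the ensuing case analysis does cover that situation; and the rose $\Gamma_0$ must be subdivided before the map to $\Gamma$ is a genuine morphism of graphs, which does not affect the Whitehead graph at the base vertex but should be said.
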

\begin{proof}
The case in which $\pi_1\Gamma$ is cyclic and $\w:S^1\to\Gamma$ is a $\pi_1$-isomorphism is easy and left as an exercise.  Suppose therefore that $\pi_1\Gamma$ admits a free splitting relative to $\w$.  It follows that there is a morphism of graphs $f:(\Gamma',\w')\to(\Gamma,\w)$ which is a homotopy equivalence, so that $\Gamma'$ has a vertex $x'$ with $\Wh_{x'}(\w')$ disconnected.  The morphism $f$ now factors as a sequence of homotopy-equivalent folds; in particular, whenever $e_1$ and $e_2$ with $\iota(e_1)=\iota(e_2)$ are identified, we have $\tau({e}_1)\neq\tau({e}_2)$.  Consider the final such fold, which identifies a pair of distinct vertices $y_1,y_2$ to a vertex $y$.  Lemma \ref{lem: Folds give splits} implies that $\Wh_y(\w)$ has a cut vertex.
\end{proof}

Motivated by Whitehead's lemma, we call a Whitehead graph $\Wh_x(\w)$ \emph{reducible} if it satisfies one of the three conclusions of the lemma; otherwise, we call $\Wh_x(\w)$ \emph{irreducible}.  We call the pair $(\Gamma,\w)$ \emph{locally irreducible} if, for every vertex $x$ of $\Gamma$, the Whitehead graph $\Wh_x(\w)$ is irreducible.  Whitehead's lemma therefore says that a locally irreducible pair is irreducible.

The converse to this statement is not quite true -- there are irreducible pairs that are not locally irreducible.  To construct an example, take an irreducible pair and apply one fold.  We therefore must allow ourselves to unfold in order to prove a converse to Whitehead's lemma.

\begin{lemma}[Converse to Whitehead's lemma]\label{lem: Converse to Whitehead}
If a pair $(\Gamma,\w)$ is irreducible then there is a locally irreducible pair $(\Gamma',\w')$ and a map of pairs $(\Gamma',\w')\to(\Gamma,\w)$ which is a homotopy equivalence.
\end{lemma}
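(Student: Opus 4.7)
The plan is to induct on the complexity $n(\Gamma, \w) := 2|V(\ess_\w)| - |V(\Wh(\w))|$, using Lemma \ref{lem: Splits give folds} to unfold at cut vertices. This complexity is non-negative whenever $(\Gamma, \w)$ is irreducible: if some $\Wh_x(\w)$ were disconnected (in particular, if it had an isolated vertex), then splitting $x$ along the components of $\Wh_x(\w)$ would produce a graph to which $\w$ still lifts, witnessing a non-trivial free splitting of $\pi_1\Gamma$ respecting $\w$ and contradicting irreducibility. Hence every vertex of $\Wh(\w)$ has positive valence, and $|V(\Wh(\w))| \leq 2|E(\Wh(\w))| = 2|V(\ess_\w)|$.

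Whenever some $\Wh_x(\w)$ contains a cut vertex, Lemma \ref{lem: Splits give folds} produces a homotopy-equivalent fold $(\Gamma_1, \w_1) \to (\Gamma, \w)$ with $|V(\Wh(\w_1))| = |V(\Wh(\w))| + 2$, hence $n(\Gamma_1, \w_1) = n(\Gamma, \w) - 2$. Since irreducibility is preserved under homotopy equivalence, I would apply the inductive hypothesis to $(\Gamma_1, \w_1)$ and compose the resulting map with the given fold. This reduces us to the case where $(\Gamma, \w)$ is irreducible and no $\Wh_x(\w)$ contains a cut vertex.

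It then remains to rule out leaves. Suppose $\Wh_x(\w)$ contains a leaf $e$ with unique neighbor $f$. If $\Wh_x(\w)$ has any vertex besides $e$ and $f$, then removing $f$ isolates $e$, so $f$ would be a cut vertex, contradicting our reduction. Hence $\Wh_x(\w)$ consists of precisely $e$ and $f$ joined by a single edge, forcing $x$ to have valence $2$ in $\Gamma$ and to be traversed exactly once by $\w$. The main technical obstacle is to eliminate such vertices, and I would do so by a chain argument. Starting at $x$, extend a maximal path through adjacent valence-$2$ single-traversal vertices. Either this path terminates at some vertex $z$ with $|\St_\Gamma(z)| \geq 3$ -- in which case the terminal chain edge appears as a leaf in the (at least three-vertex) graph $\Wh_z(\w)$, whose unique neighbor is therefore a cut vertex, again contradicting our reduction -- or else the path closes up into a circle forming a connected component of $\Gamma$ on which $\w$ restricts to a generator of $\pi_1$, yielding a cyclic factor and contradicting irreducibility. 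Either alternative is impossible, so no leaves exist and $(\Gamma, \w)$ is locally irreducible, completing the induction.
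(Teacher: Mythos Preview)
Your proof is correct and follows essentially the same strategy as the paper: repeatedly unfold at cut vertices using Lemma~\ref{lem: Splits give folds}, and use the fact that the number of edges of $\Wh(\w)$ stays constant while the number of vertices increases to guarantee termination. Your complexity $n(\Gamma,\w)=2|E(\Wh(\w))|-|V(\Wh(\w))|$ is just a repackaging of the paper's count.

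The one genuine difference is in how leaves are dispatched. The paper simply asserts at the outset that a leaf in any $\Wh_x(\w)$ forces reducibility (this is a classical fact: an edge traversed exactly once is non-separating and makes the word using it part of a basis, producing a cyclic factor), so leaves are ruled out before any unfolding and min-degree~$\geq 2$ gives the sharper bound $|V(\Wh)|\leq |E(\Wh)|$. You instead use only ``disconnected $\Rightarrow$ reducible'' up front (getting the weaker bound $|V(\Wh)|\leq 2|E(\Wh)|$, which still suffices) and postpone leaves to the end, where your chain argument shows that a leaf in the no-cut-vertex situation forces either a cut vertex elsewhere or a circle component covered once---in effect reproving ``leaf $\Rightarrow$ reducible'' by hand. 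This makes your argument slightly longer but more self-contained, since the paper's one-line dismissal of leaves does rely on a fact it does not prove. One small point you glossed over: the equation $n(\Gamma_1,\w_1)=n(\Gamma,\w)-2$ also uses $|V(\ess_{\w_1})|=|V(\ess_\w)|$, i.e.\ that unfolding preserves the number of edges of the Whitehead graph; this is immediate from the construction in Lemma~\ref{lem: Splits give folds}, but worth stating.
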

\begin{proof}
If some vertex of $\Gamma$ has either a disconnected Whitehead graph or a leaf then $(\Gamma,\w)$ is reducible.  Suppose therefore that there is a vertex $y$ so that $\Wh_y(\w)$ has a cut vertex $\bar{e}$.  By Lemma \ref{lem: Splits give folds}, there is a fold $(\Gamma',\w')\to(\Gamma,\w)$ so that the edge $e$ is unfolded to a pair of edges $e'_1,e'_2$.

Note that: the number of edges of $\Wh(\w')$ is equal to the number of edges of $\Wh(\w)$; the number of vertices of $\Wh(\w')$ is greater than the number of vertices of $\Wh(\w)$; for each vertex $z'$ of $\Gamma'$, the Whitehead graph $\Wh_{z'}(w')$ is connected without leaves.  In particular, the number of vertices of $\Wh(\w')$ is at most the number of edges of $\Wh(\w)$. It follows that only finitely many unfoldings of this form can be performed.

When no further unfoldings can be performed, the final pair $(\Gamma',\w')$ is locally irreducible, as claimed.
\end{proof}

The final lemma of this section shows that we can use locally irreducible $\partial$-immersions to recognize weakly irreducible immersions.

\begin{lemma}\label{lem: Locally irreducible d-immersions}
If $(\Lambda,\uu)\to(\Delta,\vv)\immerses (\Gamma,\w)$ is a $\partial$-immersion and $(\Lambda,\uu)$ is locally irreducible then the pair $(\Delta,\vv)$ is weakly irreducible.
\end{lemma}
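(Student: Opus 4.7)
I would argue by contradiction: assume $(\Delta,\vv)$ has a cyclic factor $v_i$ and derive that some $\Wh_x(\uu)$ must be reducible, violating local irreducibility.

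The first step is to work out how $\Wh_y(\vv)$ relates to the Whitehead graphs $\Wh_x(\uu)$ for $x$ in the preimage $f^{-1}(y)$ of the underlying map $f\colon\Lambda\to\Delta$. Because $\ess_\uu\to\ess_\vv$ is a bijection of multicycles (hence an isomorphism component by component) and $f\circ\uu=\vv$, I will show that $\Wh_y(\vv)$ is obtained from $\bigsqcup_{x\in f^{-1}(y)}\Wh_x(\uu)$ by identifying vertices sharing an $f$-image in $\St_\Delta(y)$: edges correspond bijectively, and the valence of each vertex $e'$ of $\Wh_y(\vv)$ equals the sum of the valences of its preimages in $\bigsqcup\St_\Lambda(x)$. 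Since every vertex of every $\Wh_x(\uu)$ has valence at least $2$ (by local irreducibility, combined
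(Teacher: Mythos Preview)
Your structural observation is correct: since $\ess_\uu\to\ess_\vv$ is a bijection, the map $\bigsqcup_{x\in f^{-1}(y)}\Wh_x(\uu)\to\Wh_y(\vv)$ is a bijection on edges and a surjection on (non-isolated) vertices, so $\Wh_y(\vv)$ is obtained from the irreducible graphs $\Wh_x(\uu)$ by vertex identifications, and in particular has minimum degree at least~$2$.

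However, the direction you then take---deducing weak irreducibility of $(\Delta,\vv)$ from the fact that every $\Wh_y(\vv)$ has minimum degree $\geq 2$---does not work. Consider $\Delta$ a rose with petals $e,f$ and $\vv=\{e,(ef)^2\}$. The single Whitehead graph has vertices $e,\bar e,f,\bar f$ and edges $e$--$\bar e$, $e$--$\bar f$ (twice), $\bar e$--$f$ (twice); every vertex has valence $\geq 2$. Yet with respect to the basis $\{e,ef\}$ one sees $\pi_1\Delta=\langle e\rangle*\langle ef\rangle$ with $v_1=e$ generating the first factor and $v_2=(ef)^2$ in the second, so $\langle v_1\rangle$ is a cyclic factor and $(\Delta,\vv)$ is \emph{not} weakly irreducible. (This particular $(\Delta,\vv)$ does not arise from a $\partial$-immersion of a locally irreducible pair---one checks that its Whitehead graph cannot be split at vertices into irreducible pieces---but that is precisely the point: minimum degree $\geq 2$ is strictly weaker than the hypothesis, and you have not indicated how to exploit the stronger information.)

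The paper's argument supplies the missing idea. One first unfolds $(\Delta,\vv)$, via Lemma~\ref{lem: Splits give folds}, to a homotopy-equivalent pair $(\Delta',\vv')$ whose Whitehead graphs have no cut vertices. The local irreducibility of $(\Lambda,\uu)$ is exactly what allows the morphism $(\Lambda,\uu)\to(\Delta,\vv)$ to be lifted through each such unfold (an irreducible $\Wh_x(\uu)$, having no cut vertex, must land entirely on one side of any cut vertex of $\Wh_y(\vv)$). One then argues, using the edge bijection, that every connected component of every $\Wh_{y'}(\vv')$ has at least two edges; combined with the absence of cut vertices, each component is irreducible, which forces $(\Delta',\vv')$---and hence $(\Delta,\vv)$---to be weakly irreducible. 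Your write-up contains no analogue of this unfolding-and-lifting step, and without it the argument cannot close.
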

\begin{proof}
By repeatedly applying Lemma \ref{lem: Splits give folds} as in the proof of Lemma \ref{lem: Converse to Whitehead}, there is a homotopy-equivalent morphism of pairs $(\Delta',\vv')\to(\Delta,\vv)$ so that every Whitehead graph of $(\Delta',\vv')$ has no cut vertices.  Since $(\Lambda,\uu)$ is locally irreducible, the morphism $(\Lambda,\uu)\to(\Delta,\vv)$ lifts to a morphism $(\Lambda,\uu)\to(\Delta',\vv')$; note that this map is bijective on edges of Whitehead graphs and surjective on vertices.  If some component of a Whitehead graph of $(\Delta',\vv')$ had at most one edge, so would a component of a Whitehead graph of $(\Lambda,\uu)$ that mapped to it, contradicting the hypothesis that $(\Lambda,\uu)$ is locally irreducible.
\end{proof}

\section{Admissible $\partial$-immersions}\label{sec: Admissible pairs}

Consider an irreducible graph pair $(\Gamma,\w)$ as above.  In this section, we will study $\partial$-immersions
\[
(\Lambda,\uu)\to (\Delta,\vv)\immerses(\Gamma,\w)
\]
where $(\Lambda,\uu)$ is a locally irreducible pair.  We will impose one additional condition on our $\partial$-immersions.

\begin{definition}
A map of graph pairs $(\Lambda,\uu)\to (\Gamma,\w)$ is called \emph{admissible} if there is a positive integer $n=n(\uu)$ so that every point in $\ess_\w$ has exactly $n$ preimages in $\ess_\uu$.  A $\partial$-immersion $(\Lambda,\uu)\to(\Delta,\vv)\immerses (\Gamma,\w)$ is called \emph{admissible} if the composition  $(\Lambda,\uu)\to (\Gamma,\w)$ is admissible.
\end{definition}

Calegari uses the term `admissible' similarly for maps of surfaces with boundary \cite[p.\ 37]{calegari_scl_2009}.  Note that, in his context, the integer $n$ counts preimages with a sign determined by orientation, whereas in our context, the count is unsigned.

%Commented out the definition for group pairs.

\iffalse
There is a corresponding notion for group pairs.  A map of group pairs $(f,\phi):(G,\curlyB)\to (F,\curlyA)$ is \emph{admissible} if there is a positive integer $n=n(\uu)$ so that
\[
n=\sum_{\phi(b)=a}|\Stab_F(a) : f(\Stab_G(b)) |
\]
for each $a\in\curlyA$.
\fi

We next write down a finite set of pieces, from which $\partial$-immersions of locally irreducible pairs can be constructed.

\begin{definition}\label{defn: P}
Let $\curlyW$ be the set of components of $\Wh(\w)$.  The set  $\curlyP$ of \emph{pieces (over $\Wh(\w)$)} consists of all pairs of maps of graphs (up to graph isomorphism)
\[
P\to V\into W
\]
such that $P$ is a disjoint union of irreducible graphs, the map $P\to V$ is bijective on edges, the map $V\into W$ is injective, and $W\in\curlyW$.  When $P\to V\into W$ is an element of $\curlyP$, we will often abuse notation and write $P\in\curlyP$, since the map $P\to W$ determines $V$.
\end{definition}

\begin{remark}
Note that $\curlyP$ is finite.   This trivial observation is of crucial importance.
\end{remark}

We will study admissible $\partial$-immersions of locally irreducible pairs by looking at how they are constructed from the pieces $\curlyP$.  The involutions $i_e$ on the stars of vertices of $\Wh(\w)$ define relations on the elements of $\curlyP$, as follows.

\begin{definition}
Consider
\[
P\to U\into W_1~,~Q\to V\to W_2
\]
elements of $\curlyP$.    Suppose that $e$ is a vertex of $U\subseteq W_1$ and that $\bar{e}$ is a vertex of $V\subseteq W_2$.  Let $e_1,\ldots,e_m$ be the set of vertices of $P$ that map to $e\in U$, and let $\bar{e}_1,\ldots,\bar{e}_n$ be the set of vertices of $Q$ that map to $\bar{e}\in V$.   We write
\[
P\leftrightarrow_{e} Q 
\]
if:
\begin{enumerate}[(i)]
\item $m=n$, and
\item up to reordering of indices, $i_e$ restricts to bijections $\St_{P}(e_j)\to\St_Q(\bar{e}_j)$ for all $j$.
\end{enumerate}
\end{definition}

The relation $P\leftrightarrow_{e} Q$ can be interpreted in terms of Manning's \emph{splicing} operation \cite{manning_virtually_2010}.  It says that $P$ and $Q$ can be \emph{spliced} at the sets of vertices $\{e_1,\ldots,e_n\}$ and $\{\bar{e}_1,\ldots,\bar{e}_n\}$, and that $U$ and $V$ can be spliced at $e$ and $\bar{e}$.

To record how the elements of $\curlyP$ are glued together, we introduce \emph{$\curlyP$-stars}.

\begin{definition}
A \emph{$\curlyP$-star} $\sigma$ consists of the following data:
\begin{enumerate}[(i)]
\item a piece $P\to U\into W$ in $\curlyP$;
\item for each vertex $e_i$ of $U$, a choice of piece $Q_i\to V_i\into W_i$ in $\curlyP$ and a vertex $\bar{e}_i$ of $V_i$ so that $P\leftrightarrow_{e_i} Q_i$.
\end{enumerate}
We write $\sigma(*)=P$ and, for each vertex $e_i$ of $P$, we write $\sigma(e_i)=Q_i$.
\end{definition}

Let $\curlyS\equiv\curlyS(\curlyP)$ be the (finite) set of all $\curlyP$-stars.  Let $V_\curlyP=\R^{\curlyS}$ and let $V_\curlyP^+\subseteq V_\curlyP$ be the non-negative orthant.  An admissible $\partial$-immersion
\[
(\Lambda,\uu)\to(\Delta,\vv)\looparrowright (\Gamma,\w)
\]
of a locally irreducible pair $(\Lambda,\uu)$ defines an integer vector $\pi(\uu)\in V_\curlyP^+$ in a natural way, as follows.   For each vertex $x$ of $\Delta$, let $x'_1,\ldots,x'_k$ be the preimages of $x$ in $\Lambda$ and let $x_0$ be the image vertex in $\Gamma$.
The piece $P(x)\in\curlyP$ is then defined to be
\[
\coprod_{i=1}^k \Wh_{x'_i}(\uu)\to \Wh_x(\vv)\into \Wh_{x_0}(\w)~.
\]
If $e$ is an edge of $\Delta$ with $\iota(e)=x$ and $\tau(e)=y$, then $P(x)\leftrightarrow_e P(y)$. Therefore, for each vertex $x$ of $\Delta$, we can define $\sigma_x$ to be the corresponding $\curlyP$-star associated to the labels of the neighbouring vertices: 
\begin{enumerate}[(i)]
\item $\sigma_x(*)=P(x)$; and,
\item for each $e_i\in\St_\Lambda(x)$, $\sigma_x(e_i)=P(\tau(e_i))$.
\end{enumerate}
Now define $\pi(\uu)$ to be the vector $\underline{p}\in V_\curlyP^+$ so that
\[
p_\sigma=\#\{x\in V(\Delta)\mid \sigma_x=\sigma\}
\]
for each $\sigma\in\curlyS$.

The image of $\pi$ is not arbitrary: in fact, it is precisely the set of integer points of a certain cone $\cone\subseteq V_\curlyP^+$.  We will describe this cone using systems of equations: the gluing equations and the admissibility equations.  We start with the gluing equations.  A non-negative integer vector that satisfies the gluing equations necessarily comes from a $\partial$-immersion of a locally irreducible pair.

\begin{definition}
We write $\x=(x_\sigma)_{\sigma\in\curlyS}$ for an element of $V_\curlyP$.  For each pair of pieces $P,Q\in\curlyP$ and edge $e$ satisfying $P\leftrightarrow_{e}  Q$ we have the \emph{gluing equation}
\[
\sum_{\substack{\sigma(*)=P \\ \sigma(e)=Q}} x_\sigma=\sum_{\substack{\sigma(*)=Q \\ \sigma(\bar{e})=P}} x_\sigma
\] 
where each sum is taken over all $\curlyP$-stars $\sigma$ satisfying the conditions.
\end{definition}

We next describe the admissibility equations, which force any $\curlyP$-pair that defines a vector to be admissible.

\begin{definition}
Let $\epsilon$ be an edge of $\Wh(\w)$.  For a piece
\[
P\to V\into W
\]
in $\curlyP$, set $\delta_\epsilon(P)$ to be the number of preimages of $\epsilon$ in $P$.  (Note that this is either 0 or 1, by definition.)  We now define a linear map $n_\epsilon:V_\curlyP\to\R$ by setting 
\[
n_\epsilon(\x)=\sum_{\sigma\in\curlyS} x_\sigma\delta_\epsilon(\sigma(*))~.
\]
The \emph{admissibility equations} assert that $n_\epsilon(\x)=n_{\epsilon'}(\x)$  for all edges $\epsilon$ and $\epsilon'$ in $\Wh(\w)$.
\end{definition}

The cone $\cone\subseteq V_\curlyP^+$ is now defined to be the subset of $V_\curlyP^+$ that satisfies the gluing equations and the admissibility equations.

\begin{lemma}\label{lem: Vectors and pairs}
An integer vector $\x\in V_\curlyP^+\smallsetminus 0$ is the image of an admissible $\partial$-immersion from a locally irreducible pair under $\pi$ if and only if it is in $\cone$.
\end{lemma}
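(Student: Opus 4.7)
The proof is a direct translation between the combinatorics of $\curlyP$-stars and the geometry of $\partial$-immersions of locally irreducible pairs. Both directions hinge on the fact that such a $\partial$-immersion is nothing more than a global assembly of local Whitehead-graph data at each vertex of $\Delta$, with the gluing equations handling edge-matching across $\Delta$ and the admissibility equations enforcing the constant-multiplicity condition.

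For the forward direction, suppose $(\Lambda,\uu)\to(\Delta,\vv)\looparrowright(\Gamma,\w)$ is an admissible $\partial$-immersion with $(\Lambda,\uu)$ locally irreducible. I first check that the data $P(x)=\bigl(\coprod_i\Wh_{x_i'}(\uu)\to\Wh_x(\vv)\hookrightarrow\Wh_{x_0}(\w)\bigr)$ really is an element of $\curlyP$: the components of the left-hand graph are irreducible by local irreducibility of $(\Lambda,\uu)$; the first map is bijective on edges because $\ess_\uu\to\ess_\vv$ is bijective, as built into the definition of $\partial$-immersion; and the inclusion is injective because the immersion $(\Delta,\vv)\looparrowright(\Gamma,\w)$ induces an injection on stars at each vertex. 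Consequently each $\sigma_x$ is a well-defined $\curlyP$-star. The gluing equation for $(P,Q,e)$ with $P\leftrightarrow_e Q$ then follows by double-counting oriented edges of $\Delta$: each edge from $x$ to $y$ contributes $1$ to the sum on the left via $\sigma_x$ and $1$ to the sum on the right via $\sigma_y$. The admissibility equations hold because $n_\epsilon(\pi(\uu))$ equals the number of vertices of $\ess_\uu$ lying over the vertex of $\ess_\w$ corresponding to $\epsilon$; admissibility makes this count independent of $\epsilon$, with constancy at vertices forcing constancy at interior points since $\ess_\uu\to\ess_\w$ is an immersion of disjoint unions of circles.

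For the converse, given an integer $\x\in\cone\smallsetminus 0$, I build an admissible $\partial$-immersion of a locally irreducible pair whose image under $\pi$ is $\x$. For each $\sigma\in\curlyS$, create $x_\sigma$ vertex-instances of type $\sigma(*)=(P\to V\hookrightarrow W)$; these form $V(\Delta)$. Each such vertex carries, for every vertex $e_i$ of $V$, a half-edge of type $(\sigma(*),e_i,\sigma(e_i))$. The gluing equation for $(P,Q,e)$ with $P\leftrightarrow_e Q$ says exactly that the multiset of $(P,e,Q)$-labelled half-edges is in bijection with the multiset of $(Q,\bar e,P)$-labelled half-edges, so we pair them arbitrarily to form $E(\Delta)$. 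Set $\Wh_x(\vv)=V$ at each $x\in V(\Delta)$, and take the Whitehead graphs of the preimages of $x$ in $\Lambda$ to be the connected components of the corresponding $P$. The splicing bijections packaged in $\leftrightarrow_e$ are precisely the star-bijections needed for $\ess_\vv$ and $\ess_\uu$ to close up consistently across each new edge of $\Delta$, producing well-defined multicycles. The inclusions $V\hookrightarrow W\in\curlyW$, together with the canonical vertex of $\Gamma$ associated to each such component $W$, assemble into an immersion $(\Delta,\vv)\looparrowright(\Gamma,\w)$; $(\Lambda,\uu)\to(\Delta,\vv)$ is a $\partial$-immersion because each $P\to V$ is bijective on edges; $(\Lambda,\uu)$ is locally irreducible by the irreducibility clause in the definition of $\curlyP$; and admissibility is precisely the admissibility equations. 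By construction $\pi(\uu)=\x$.

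The main technical point is verifying that the half-edge pairing really yields legitimate graph pairs in the sense of the paper -- in particular, that $\ess_\vv$ and $\ess_\uu$ are genuine multicycles immersing into $\ess_\w$ rather than graphs with backtracks or unclosed arcs, and that $\Delta$ has no valence-one vertices. The multicycle and immersion properties are forced by the splicing data in $\leftrightarrow_e$: the star-bijections $i_e$ continue each strand bijectively through every vertex of $\Delta$, eventually closing up into circles, while injectivity of each $V\hookrightarrow W$ makes $\ess_\vv\to\ess_\w$ locally injective on stars. The absence of valence-one vertices in $\Delta$ traces back to the hypothesis that $\Gamma$ has none, together with the irreducibility clause in $\curlyP$, which rules out leaves in the Whitehead graphs at each vertex and thereby ensures each $V$ has valence at least two.
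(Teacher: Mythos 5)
Your proof is correct and follows essentially the same strategy as the paper: double-counting edges of $\Delta$ for the gluing equations and identifying $n_\epsilon$ with the preimage count over the corresponding vertex of $\ess_\w$ for admissibility in the forward direction, and assembling a graph pair from $x_\sigma$ copies of each $\sigma(*)$ with the gluing equations supplying the required matching of half-edges (what the paper phrases as a choice of bijection between $\curlyP$-stars) in the converse. Your closing paragraph spells out the verifications -- that the splicing data in $\leftrightarrow_e$ closes the strands into genuine multicycles immersing into $\ess_\w$, and that the resulting $\Delta$ is a legitimate graph pair -- which the paper leaves implicit behind Remark \ref{rem: Whitehead graphs determine pairs}, but there is no genuine deviation in method.
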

\begin{proof}
Let $(\Lambda,\uu)$ be locally irreducible and
\[
(\Lambda,\uu)\to(\Delta,\vv)\immerses(\Gamma,\w)
\]
a $\partial$-immersion.  First we show that $\pi(\uu)$ satisfies the gluing equations.  Indeed, the expression in the gluing equations is just two different ways of evaluating the number of edges $e$ of $\Delta$ with $P(\iota(e))=P$ and $P(\tau(e))=Q$.  The admissibility equations are satisfied since each $n_\epsilon$ evaluates to $n(\uu)$.

Conversely, given an integer vector $\x\in\cone$, we need to construct an admissible $\partial$-immersion 
\[
(\Lambda,\uu)\to(\Delta,\vv)\immerses(\Gamma,\w)
\]
with $(\Lambda,\uu)$ locally irreducible.  By Remark \ref{rem: Whitehead graphs determine pairs}, it is enough to describe
\[
\Wh(\uu)\to\Wh(\vv)\into\Wh(\w)
\]
together with their pairings on stars of vertices.  For each star $\sigma$, $\Wh(\uu)\to\Wh(\vv)\into\Wh(\w)$ contains $x_\sigma$ copies of the piece $\sigma(*)$.  This determines the graphs and maps $\Wh(\uu)\to\Wh(\vv)\into\Wh(\w)$; it remains to determine the pairings. Consider the pieces
\[
P\to U\into W_1~,~Q\to V\into W_2
\]
and suppose that $e$ is an edge of $U$ and $\bar{e}$ is an edge of $V$.  The gluing equations imply that here is a bijection between the number of $\curlyP$-stars $\sigma$ so that $\sigma(*)=P$ and $\sigma(e)=Q$ and the number of $\curlyP$-stars $\sigma$ so that $\sigma(*)=Q$ and $\sigma(\bar{e})=P$ which satisfy the condition that the bijection $i_e:\St_{W_1}(e)\to\St_{W_2}(\bar{e})$ restricts to a bijection $\St_U(e)\to\St_V(\bar{e})$, and thence to bijections of the stars of the preimages in $P$ and $Q$.  Choosing a bijection between these $\curlyP$-stars then determines the required bijection between vertices of the copies of $P$ and $Q$ in these $\curlyP$-stars, and the bijections between stars are then determined by the relation $\leftrightarrow_e$.

By construction, $(\Lambda,\uu)$ is locally irreducible and
\[
(\Lambda,\uu)\to (\Delta,\vv)\immerses(\Gamma,\w)
\]
is a $\partial$-immersion.  Finally, the admissibility equations immediately imply that this $\partial$-immersion is admissible.
\end{proof}

Thus, we have seen that admissible $\partial$-immersions of locally irreducible pairs correspond naturally to non-zero integer vectors in $\cone$ or, equivalently, to rational points in the projectivization  $\Proj(\cone)$.  Motivated by Calegari's work on stable commutator length (see \cite{calegari_stable_2009} and \cite{calegari_scl_2009}, and also \cite{brady_turn_2011}), we will study these rational points via rational functions on $\Proj(\cone)$.

\section{The rationality theorem}\label{sec: Rationality}

We start by writing down two natural linear maps on $\cone$.   For an admissible $\partial$-immersion
\[
(\Lambda,\uu)\to(\Delta,\vv)\immerses (\Gamma,\w) ~,
\]
the corresponding linear maps are (minus) the Euler characteristic of $\Lambda$, and the degree $n(\uu)$ with which $\ess_\uu$ covers $\ess_\w$.  The key observation is that both of these can be computed from the vector $\pi(\uu)$.

First, the admissibility equations imply that the linear map $n_\epsilon$ is independent of $\epsilon$.  We therefore write $n=n_\epsilon$, evidently a linear map which is non-zero on $\cone\smallsetminus 0$.

Second, for a piece
\[
P\to V\into W
\]
in  $\curlyP$, we let $\mu(V)$ denote the number of connected components of $V$, and let $\nu(V)$  denotes the number of vertices of $V$. We then define $\chi_-:V_\curlyP\to \R$ by
\[
\chi_-(\x)=\sum_{\sigma\in\curlyS}x_\sigma\left(\frac{1}{2}\nu(\sigma(*))-\mu(\sigma(*))\right)~.
\]
It's well known that the Euler characteristic of a graph can be computed as the sum over the vertices of one minus half the valence, and from this we see that, for an admissible $\partial$-immersion
\[
(\Lambda,\uu)\to(\Delta,\vv)\immerses(\Gamma,\w)
\]
of a locally irreducible pair $(\Lambda,\uu)$, we have $\chi_-\circ\pi(\uu)=-\chi(\Delta)$. 

\begin{definition}
Since $\chi_-/n$ is a quotient of two linear maps on $\cone\smallsetminus 0$ and the denominator is non-zero, it yields a well defined function on the projectivization $\Proj(\cone)$.  We call this function
\[
\rho_\curlyP=\frac{\chi_-}{n}
\]
the \emph{projective $\curlyP$-rank} function on $\Proj(\cone)$.
\end{definition}

In analogy with stable commutator length (see \cite{calegari_scl_2009}), we may use the rational function $\rho_\curlyP$ to define an invariant of a multicycle $\w$ in a graph $\Gamma$.

\begin{definition}
The \emph{maximal $\curlyP$-rank} of a pair $(\Gamma,\w)$ is denoted by $\rho^+_\curlyP(\w)$ and defined to be
\[
\rho^+_\curlyP(\w):=\max_{[\x]\in\Proj(\cone)}\rho_\curlyP[\x]~.
\]
Note that this maximum is indeed realized, since $\Proj(\cone)$ is compact.  Similarly, the \emph{minimal} $\curlyP$-rank, $\rho^-_\curlyP(\w)$, is defined to the minimum of $\rho_\curlyP$ over the same domain.
\end{definition}

Since $\Proj(\cone)$ is compact, the maximal and minimal $\curlyP$-ranks are certainly attained as long as $\Proj(\cone)$ is non-empty (i.e.\ as long as $\cone$ is non-zero).  In fact, since $\rho_\curlyP$ is a quotient of linear maps, the maximal and minimal $\curlyP$-ranks are attained on rational points of $\Proj(\cone)$, and hence are realized by admissible $\partial$-immersions.

\begin{theorem}\label{thm: Rationality theorem}
If $\cone\neq 0$, then the maximal and minimal $\curlyP$-ranks are realized by admissible $\partial$-immersions of locally irreducible pairs; that is, there exist locally irreducible pairs $(\Lambda_\pm,\uu_\pm)$ and admissible $\partial$-immersions
\[
(\Lambda_\pm,\uu_\pm)\to(\Delta_\pm,\vv_\pm)\immerses(\Gamma,\w)
\]
so that $\rho_\curlyP\circ\pi(\uu_\pm)=\rho^\pm_\curlyP(\w)$.  In particular, $\rho^\pm_\curlyP(\w)$ are positive rational numbers.
\end{theorem}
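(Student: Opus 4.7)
The plan is to exploit two structural features: $\cone$ is a rational polyhedral cone, and $\rho_\curlyP$ is the quotient of two integer linear functionals on $V_\curlyP$. First I would observe that $\cone$ is the intersection of the non-negative orthant $V_\curlyP^+$ with the hyperplanes defined by the (finitely many) gluing and admissibility equations, each of which has integer coefficients, so $\cone$ is a rational polyhedral cone.

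Next, summing the admissibility equations over all edges of $\Wh(\w)$ shows that $n$ is strictly positive on $\cone\setminus 0$: every piece $P\to V\into W$ contains at least one edge, so for any non-zero $\x\in V_\curlyP^+$ the total $\sum_\epsilon n_\epsilon(\x)$ is strictly positive, and the admissibility equations distribute this evenly among the $n_\epsilon$. It follows that the affine hyperplane slice $S := \{\x\in\cone \mid n(\x)=1\}$ is a compact rational polytope, and on $S$ the function $\rho_\curlyP$ coincides with the linear functional $\chi_-$. A linear functional on a compact polytope attains its maximum and minimum at vertices, and because $S$ is rational, each of these vertices is a rational point.

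Let $\x_\pm \in S$ be vertices of $S$ realizing the extrema of $\chi_-$. Clearing denominators yields positive integer multiples $\p_\pm\in\cone$, and by Lemma \ref{lem: Vectors and pairs} each $\p_\pm = \pi(\uu_\pm)$ for some admissible $\partial$-immersion
\[
(\Lambda_\pm,\uu_\pm)\to(\Delta_\pm,\vv_\pm)\immerses(\Gamma,\w)
\]
with $(\Lambda_\pm,\uu_\pm)$ locally irreducible. Then
\[
\rho_\curlyP\circ\pi(\uu_\pm) \;=\; \chi_-(\p_\pm)/n(\p_\pm) \;=\; \rho^\pm_\curlyP(\w)
\]
is a ratio of integers, hence rational.

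Rationality and realizability thus fall out cleanly from the rational polyhedral structure; the main subtlety I expect to face is the positivity of $\rho^\pm_\curlyP(\w)$. Since $n(\p_\pm)>0$, positivity reduces to showing $\chi(\Delta_\pm)<0$, which amounts to ruling out the possibility that $\Delta_\pm$ is a disjoint union of circles. I would argue this from the local irreducibility of $(\Lambda_\pm,\uu_\pm)$ together with the hypothesis $\cone\neq 0$: a non-trivial integer vector in $\cone$, combined with the involutions on $\Wh(\w)$ that tie together stars at opposite edges, should force at least one vertex of $\Delta_\pm$ to have valence strictly greater than two, so that $\chi_-(\p_\pm)>0$.
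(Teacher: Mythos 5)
Your argument is the same as the paper's: normalize to the slice $n^{-1}(1)\cap\cone$, note that $\rho_\curlyP$ restricts to the linear functional $\chi_-$ there, pick a rational vertex attaining each extremum, clear denominators, and invoke Lemma \ref{lem: Vectors and pairs}. You also fill in the (implicit) verification that $n>0$ on $\cone\smallsetminus 0$, which the paper uses silently when it calls $n^{-1}(1)$ a polytope; your argument by summing the admissibility equations is fine, since every piece $P\to V\into W$ has $P$ a nonempty disjoint union of irreducible graphs and hence has edges.

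The one place you should be careful is the positivity claim, and here your sketch does not quite work. You propose to argue that local irreducibility plus $\cone\neq 0$ forces some vertex of $\Delta_\pm$ to have valence $>2$, hence $\chi_-(\p_\pm)>0$. But a locally irreducible pair can have every vertex of valence exactly $2$: take $\Lambda$ a single circle with one vertex and $\uu$ a degree-$d$ cover for $d\geq 2$; the Whitehead graph is two vertices joined by $d\geq 2$ parallel edges, which is connected, leafless and has no cut vertex. Thus $\chi_-$ can vanish on nonzero integer points of $\cone$, and in fact the identity $\partial$-immersion of such a pair $(\Gamma,\w)$ already gives $\rho^-_\curlyP(\w)=0$. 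Your valence argument therefore cannot establish strict positivity of $\rho^-_\curlyP$ in full generality. It is worth noting that the paper's own proof of Theorem \ref{thm: Rationality theorem} also does not justify the ``positive'' in the final sentence; it establishes rationality and realizability and then asserts positivity. Only non-negativity of $\chi_-$ (and hence of $\rho^\pm_\curlyP$) follows cleanly from the fact that each piece has all valences $\geq 2$, and only the maximal $\curlyP$-rank is used in the sequel, so the downstream arguments are unaffected -- but you are right to flag this as the delicate point, and your proposed fix needs a different argument (or the statement should be weakened to non-negative).
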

\begin{proof}
We prove the result for the maximal $\curlyP$-rank; the proof for the minimal $\curlyP$-rank is identical.  If $\cone\neq 0$ then the projectivization $\Proj(\cone)$ is non-empty.   From the definition of $\rho^+_\curlyP$, we may normalize and restrict our attention to the rational polytope $n^{-1}(1)$, so
\[
\rho^+_\curlyP(\w)=\max_{n(\x)=1}\chi_-(\x)~.
\]
But $\chi_-$ is linear, and so attains its maximum on a vertex $\x_0$ of $n^{-1}(1)$.  Since $\x_0$ is rational and $\rho^+_\curlyP$ is a projective function, there is some integer vector $\x_1$, a multiple of $\x_0$, on which $\rho^+_\curlyP$ attains its maximum.  Since $\x_1\in\cone$ is an integer vector,  it is equal to $\pi(\uu)$ for some admissible $\curlyP$-pair $(\Lambda,\uu)$, which therefore realizes $\rho^+_\curlyP$, as required.
\end{proof}

An admissible $\partial$-immersion
\[
(\Lambda,\uu)\to(\Delta,\vv)\immerses(\Gamma,\w)~,
\]
of a locally irreducible pair $(\Lambda,\uu)$ for which $\rho_\curlyP\circ\pi(\uu)=\rho^+_\curlyP(\w)$ is called \emph{maximal}. (Similarly, if $\rho_\curlyP\circ\pi(\uu)=\rho^-_\curlyP(\w)$ then the $\partial$-immersion is called \emph{minimal}.)

\section{Maximal $\curlyP$-rank and surfaces}\label{sec: Surfaces}

Our results so far imply that every irreducible pair admits a maximal $\partial$-immersion.

\begin{lemma}\label{lem: Maximal pairs exist}
If $(\Gamma,\w)$ is irreducible then there exists a maximal, admissible $\partial$-immersion
\[
(\Lambda,\uu)\to(\Delta,\uu)\immerses(\Gamma,\w)
\]
for a locally irreducible pair $(\Lambda,\uu)$.
\end{lemma}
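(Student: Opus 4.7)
The plan is to combine the Converse to Whitehead's Lemma (Lemma \ref{lem: Converse to Whitehead}) with the Rationality Theorem (Theorem \ref{thm: Rationality theorem}). The key observation is that the Rationality Theorem does all the heavy lifting as soon as we know $\cone \neq 0$, so the entire content of the lemma reduces to producing a single non-zero integer point of $\cone$, i.e., a single admissible $\partial$-immersion of a locally irreducible pair into $(\Gamma,\w)$.

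First, I would apply Lemma \ref{lem: Converse to Whitehead} to the irreducible pair $(\Gamma,\w)$ to obtain a locally irreducible pair $(\Gamma',\w')$ together with a homotopy-equivalent morphism of pairs $(\Gamma',\w')\to(\Gamma,\w)$. Concatenating with the identity immersion on $(\Gamma,\w)$ yields the candidate
\[
(\Gamma',\w')\to(\Gamma,\w)\immerses(\Gamma,\w).
\]
I then need to check this is an admissible $\partial$-immersion. For the $\partial$-immersion condition, one must verify that $\ess_{\w'}\to\ess_{\w}$ is bijective. This follows from the explicit construction in Lemma \ref{lem: Splits give folds}: each unfolding step is defined entirely in terms of the Whitehead graphs, and only alters a single vertex of $\Gamma'$ (splitting it in two) while leaving the combinatorics of the multicycle $\ess_{\w'}$ unchanged; only the map of the multicycle into the ambient graph is modified. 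Hence the underlying map $\ess_{\w'}\to\ess_\w$ is a bijection of abstract graphs. Admissibility is then immediate with $n(\w')=1$, since every point of $\ess_\w$ has exactly one preimage in $\ess_{\w'}$.

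With these checks in hand, Lemma \ref{lem: Vectors and pairs} gives that $\pi(\w')$ is a non-zero integer point of $\cone$, so in particular $\cone\neq 0$. Applying Theorem \ref{thm: Rationality theorem} then produces a locally irreducible pair $(\Lambda,\uu)$ and an admissible $\partial$-immersion
\[
(\Lambda,\uu)\to(\Delta,\vv)\immerses(\Gamma,\w)
\]
with $\rho_\curlyP\circ\pi(\uu)=\rho^+_\curlyP(\w)$; this is the required maximal $\partial$-immersion.

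The only step that requires any care is the verification that $\ess_{\w'}\to\ess_{\w}$ is bijective, since without this the concatenation above is not a $\partial$-immersion in the sense of Definition \ref{defn: d-immersion}; but this is essentially automatic from the Whitehead-graph description of unfoldings, so I do not expect it to be an obstacle. Everything else is a direct appeal to previously established results.
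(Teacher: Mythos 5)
Your proof is correct and follows essentially the same route as the paper's: apply the Converse to Whitehead's Lemma to produce a locally irreducible pair mapping to $(\Gamma,\w)$, observe that post-composing with the identity gives an admissible $\partial$-immersion so that $\cone\neq 0$, and then invoke the Rationality Theorem. The paper simply records that $\ess_{\w'}\to\ess_\w$ is a homeomorphism where you spell out why the unfoldings of Lemma \ref{lem: Splits give folds} leave the multicycle unchanged as an abstract graph; this is the same observation, just made more explicit.
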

\begin{proof}
Since $(\Gamma,\w)$ is irreducible, Lemma \ref{lem: Converse to Whitehead} guarantees a locally irreducible pair $(\Gamma',\w')\to(\Gamma,\w)$.  The map $(\Gamma',\w')\to(\Gamma,\w)$ consists of a $\pi_1$-isomorphism and a  homeomorphism $\ess_{\w'}\to\ess_\w$, so is certainly admissible and essential.  In particular,
\[
(\Gamma',\w')\to(\Gamma,\w)\stackrel{\cong}{\to} (\Gamma,\w)
\]
is an admissible $\partial$-immersion of a locally irreducible pair, so $\pi(\w')\in\cone$ and $\cone\neq 0$.  Theorem \ref{thm: Rationality theorem} now implies that a maximal $\partial$-immersion exists.
\end{proof}

In this section, we shall use the relative JSJ decomposition together with the results of \cite{wilton_one-ended_2011} to show that maximal $\partial$-immersions are closely related to surfaces. 

\begin{definition}
A group pair is said to be of \emph{surface type} if it arises as the fundamental group of  a space pair $(\Sigma,\partial\Sigma)$, where $\Sigma$ is a compact surface with boundary.  It is said to be of \emph{weak surface type} if it is a free product of pairs of surface type.   A graph pair $(\Gamma,\w)$ is of \emph{(weak) surface type} if the corresponding group pair is of (weak) surface type.
\end{definition}

A theorem of Culler \cite{culler_using_1981} shows that any pair $(\Gamma,\w)$ of surface type can be unfolded to a \emph{fatgraph} $(\Gamma',\w')$ -- a graph pair in which every Whitehead graph is a cycle. One may therefore equivalently think of pairs of surface type as given by fatgraphs.  Likewise, a pair of weak surface type can be unfolded to a graph pair in which every Whitehead graph is a disjoint union of cycles.
 
Fundamental groups of pairs of surface type can typically be decomposed as graphs of groups in many ways.  In order to discuss this, we introduce some terminology for graph-of-groups decompositions of pairs. 

\begin{definition}\label{defn: Decomposition of group pair}
Let $(G,\curlyA)$ be a group pair.  A \emph{decomposition} of $(G,\curlyA)$ is a graph of groups $\curlyG$ with fundamental group $G$ such that, for every $a\in\curlyA$, the stabilizer $\Stab_G(a)$ is conjugate into a vertex group of $\curlyG$. 

Let $v$ be a vertex of $\curlyG$, and fix a pre-image $\tilde{v}$ of $v$ in the Bass--Serre tree $T$.  Let $G_{\tilde{v}}$ be the stabilizer of $\tilde{v}$.  Set
\[
\curlyA_{\tilde{v}}=\{a\in \curlyA\mid \Stab_{G_{\tilde{v}}}(a)\neq 1\}
\]
and let $\curlyB_{\tilde{v}}$ be the set of edges of $T$ incident at $\tilde{v}$.  The \emph{induced pair} at $v$ is defined to be $(G_{\tilde{v}},\curlyA_{\tilde{v}}\sqcup\curlyB_{\tilde{v}})$, which is defined up to conjugacy in $G$.  The vertex $v$ is called \emph{peripheral} if $\curlyA_{\tilde{v}}$ is non-empty.

If every edge group of $\curlyG$ is cyclic then $\curlyG$ is said to be a \emph{cyclic} decomposition of $(G,\curlyA)$.   As usual, the graph of groups $\curlyG$ is called \emph{trivial} if $G$ is the stabilizer of some vertex of the Bass--Serre tree. 
\end{definition}

We will only be concerned with cyclic decompositions of graph pairs $(\Gamma,\w)$, with $F=\pi_1\Gamma$.  We will abuse notation and write the corresponding group pair as $(F,\w)$. 

Pairs of surface type can be contrasted with rigid pairs, which only have trivial decompositions.

\begin{definition}
An irreducible graph pair $(\Gamma,\w)$ is \emph{rigid} if every cyclic decomposition of $(\Gamma,\w)$ is trivial, and if $(\Gamma,\w)$ is not of surface type.  (This last requirement is to rule out the pair of pants, which is of surface type but admits no cyclic decompositions.)   A group pair $(F,\w)$ is \emph{rigid} if some (any) corresponding graph pair is rigid.
\end{definition}

%Commented out definitions for space pairs. Plan to add more about them before the proofs of the lemmas.

\iffalse

\begin{definition}\label{defn: Decomposition of space pair}
Let $(X,\w)$ be a space pair.  A \emph{decomposition} of $(X,\w)$ is a graph of spaces $\curlyX$ homotopy equivalent to $X$ such that, for every component $Y_i$ of the domain of $\w$, $\w(Y_i)$ is contained in a vertex space of $\curlyX$.  

Let $v$ be a vertex of the underlying graph of $\curlyX$, let $I_v$ be the set of indices $i$ so that $\w(Y_i)$ is contained in $\curlyX_v$ and let $\{e_j\}$ be the set of edges of the underlying graph of $\curlyX$ incident at $v$. If we set $\w_v$ to be the the restriction of $\w$ to the union of components $\bigcup_{i\in I_v }Y_i$, and $\partial_v$ to be the coproduct of the edge maps $\curlyX_{e_j}\to X_v$, then the \emph{induced pair} at $v$ is defined to be
\[
(\curlyX_v,\w_v\sqcup\partial_v)~.
\]
That is, the induced space pair at a vertex consists of the vertex space, together with incident edge spaces together with any components of the original peripheral structure that lie in the vertex space.

If every edge space of $\curlyX$ is  a circle then $\curlyX$ is said to be a \emph{cyclic} decomposition of $(X,\w)$.
\end{definition}

\fi

The main theorem of this section is phrased in terms of the \emph{relative JSJ decomposition} of the group pair $(F,\w)$.  This is a canonical decomposition of the pair $(F,\w)$, which in a sense encodes all cyclic decompositions.  The absolute version of this decomposition was described in the hyperbolic case by Bowditch \cite{bowditch_cut_1998}; the relative version in the free case was described by Cashen \cite{cashen_splitting_2016}. See also the work of Guirardel and Levitt, who explain how to construct this JSJ decomposition as a tree of cylinders \cite{guirardel_JSJ_2017}. 

\begin{theorem}[Relative JSJ decomposition]\label{thm: JSJ}
Let $(F,\w)$ be an irreducible group pair.  There is a canonical cyclic decomposition $\curlyG$ for $F$ with the following properties.
\begin{enumerate}[(i)]
\item The underlying graph of $\curlyG$ has three kinds of vertices -- rigid, surface and cyclic -- such that:
\begin{enumerate}
\item if a vertex $v$ is of rigid type then the induced pair at $v$ is a rigid group pair;
\item if a vertex $v$ is of surface type then the induced pair at $v$ is of surface type;
\item if a vertex $v$ is cyclic then the vertex group $\curlyG_v$ is (infinite) cyclic.
\end{enumerate}
\item The underlying graph of $\curlyG$ is bipartite, with red vertices cyclic and green vertices either rigid or surface.  In particular, every edge adjoins exactly one cyclic vertex.
\item Every peripheral subgroup $\langle w_i\rangle$ is conjugate into a unique cyclic vertex group. (These cyclic vertices are called \emph{peripheral}.)
\end{enumerate}
\end{theorem}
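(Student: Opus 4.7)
The plan is to derive this statement as a direct application of the relative JSJ machinery cited in the preceding paragraph, rather than to re-develop that machinery from scratch. I would begin by setting up the JSJ problem intrinsically: consider the group $F=\pi_1\Gamma$ together with the conjugacy classes of peripheral subgroups $\{\langle w_i\rangle\}$, and look for splittings of $F$ as a graph of groups with infinite cyclic edge groups in which every $\langle w_i\rangle$ is elliptic. Since $(\Gamma,\w)$ is irreducible, Grushko's theorem rules out any free splitting of $F$ relative to $\w$, so all such splittings are over honest $\Z$ subgroups. Cashen's construction in the free relative setting \cite{cashen_splitting_2016}, or more generally the Guirardel--Levitt theory applied to the class of cyclic splittings relative to the peripheral family \cite{guirardel_JSJ_2017}, then supplies a canonical deformation space of maximal (relative) cyclic splittings and, within that space, a canonical JSJ tree $T_0$ on which $F$ acts.

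Next I would read off the vertex types of $T_0/F$. The standard Rips--Sela / Bowditch classification divides non-cyclic vertices into those whose induced pair is quadratically hanging (these give the surface-type vertices, via Culler's fatgraph realization of surface pairs) and those whose induced pair admits no further compatible cyclic splitting relative to the edges and peripheral cycles meeting it; the latter are exactly rigid in the sense of the definition in this section. A vertex whose group is itself infinite cyclic is declared cyclic. The fact that every $\langle w_i\rangle$ sits inside a cyclic vertex, and a unique one, follows because $\langle w_i\rangle$ is elliptic in every cyclic splitting by construction, hence fixes some vertex of $T_0$, and the unique maximal cyclic subgroup of $F$ containing $\langle w_i\rangle$ determines the vertex up to the $F$-action.

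To enforce the bipartite (red/green) structure in clause (ii), I would pass from $T_0$ to its tree of cylinders $T_c$ in the sense of Guirardel--Levitt, using commensurability of cyclic edge groups as the admissible equivalence relation. In the torsion-free cyclic setting a cylinder is an orbit of edges whose stabilizers lie in a common maximal cyclic subgroup $C$; collapsing each cylinder to a single new vertex stabilized by $C$ produces a bipartite tree whose one vertex type is exactly the cyclic vertices (the new ones coming from cylinders, together with any pre-existing cyclic vertices absorbed into them) and whose other vertex type consists of the surface and rigid vertices inherited from $T_0$. That each peripheral $\langle w_i\rangle$ lies in a unique maximal cyclic subgroup then forces it into a unique cyclic vertex of $T_c$, giving clause (iii), while the fact that $T_c$ is canonical, irreducibility-invariant, and has the same elliptic subgroups as $T_0$ are standard features of the tree-of-cylinders construction.

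The main obstacle in executing this plan is purely a matter of citation hygiene: the published sources treat slightly different variants (Bowditch works with one-ended hyperbolic groups absolutely, Cashen with free groups relative to a multiword, Guirardel--Levitt at the most general level), so the real work is to check that the hypotheses of the relative, torsion-free, cyclic version apply to our $(F,\w)$ and that the three papers' classifications of vertex groups line up with the rigid/surface/cyclic trichotomy used here. Once that is confirmed, no further argument is needed; the properties (i)--(iii) are then just repackaging of the output of the cited theorems.
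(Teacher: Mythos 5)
Your proposal is correct and follows exactly the route the paper has in mind. The paper does not actually supply a proof of Theorem \ref{thm: JSJ}: it states the theorem and refers to Bowditch \cite{bowditch_cut_1998} for the absolute hyperbolic case, Cashen \cite{cashen_splitting_2016} for the relative free case, and Guirardel--Levitt \cite{guirardel_JSJ_2017} for the tree-of-cylinders construction that yields the bipartite structure --- the same sources you assemble, and your account of how they fit together (relative cyclic JSJ deformation space, QH/rigid dichotomy, passage to the tree of cylinders for clause (ii), and uniqueness of the maximal cyclic subgroup containing each $\langle w_i\rangle$ for clause (iii)) is an accurate expansion of that citation.
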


The decomposition $\curlyG$ guaranteed by the theorem is called the \emph{relative JSJ decomposition} of the pair $(F,\w)$.  For an irreducible graph pair $(\Gamma,\w)$, we will refer to the disjoint union of the relative JSJ decompositions of the fundamental groups of the components as the \emph{relative JSJ decomposition} of the pair $(\Gamma,\w)$. 

We are now ready to state the main theorem of this section, which describes the relative JSJ decompositions of maximal $\partial$-immersions.

\begin{theorem}\label{thm: Maximal pairs and surfaces}
If $(\Lambda,\uu)$ is locally irreducible and
\[
(\Lambda,\uu)\to(\Delta,\vv)\immerses(\Gamma,\w)
\]
is a maximal, admissible $\partial$-immersion then  for each irreducible free factor $(\pi_1\Delta_i,\vv_i)$ of the corresponding group pair $(\pi_1\Delta,\vv)$,  the relative JSJ decomposition of $(\pi_1\Delta_i,\vv_i)$ has no rigid vertices.  Furthermore, if there is such a maximal $\partial$-immersion, then there is a maximal, admissible $\partial$-immersion so that $(\Delta,\vv)$ is of weak surface type.
\end{theorem}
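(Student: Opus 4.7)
My plan is to apply the relative JSJ decomposition (Theorem \ref{thm: JSJ}) to each irreducible free factor $(\pi_1\Delta_i,\vv_i)$ of $(\pi_1\Delta,\vv)$, ruling out rigid vertices by an argument that exploits maximality together with the technology of \cite{wilton_one-ended_2011}. From the resulting surfaces-and-cylinders structure I will then construct a maximal pair of weak surface type.

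For the first assertion, I would assume for contradiction that the JSJ of some irreducible factor contains a rigid vertex $v$, with induced rigid group pair $(F_v,\curlyA_v\sqcup\curlyB_v)$. The key technical input, imported from \cite{wilton_one-ended_2011}, is that a rigid pair admits a finite cover in which $-\chi$ grows strictly faster than the total peripheral length; rigidity is essential here, because for a generic piece a degree-$d$ cover multiplies both quantities by exactly $d$, leaving $\rho_\curlyP$ invariant. I would use such an ``efficient'' cover of the piece at $v$ to build a replacement admissible $\partial$-immersion $(\Lambda',\uu')\to(\Delta',\vv')\immerses(\Gamma,\w)$, by pulling back the cover along the JSJ -- taking compatible (possibly trivial) finite covers of the incident cyclic vertex spaces and of every other piece -- and reassembling. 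After checking with Lemmas \ref{lem: Locally irreducible d-immersions} and \ref{lem: Vectors and pairs} that $(\Lambda',\uu')$ is locally irreducible and that $\pi(\uu')$ lies in $\cone$, the local improvement at $v$ forces $\chi_-(\pi(\uu'))/n(\pi(\uu')) > \rho^+_\curlyP(\w)$, contradicting the maximality of $(\Lambda,\uu)$.

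For the second assertion, I would begin with any maximal admissible $\partial$-immersion and invoke the first assertion, so that every relevant JSJ consists only of cyclic and surface vertices. I then modify $(\Delta,\vv)$ without altering its image $\pi(\uu)\in\cone$: each non-peripheral cyclic vertex can be absorbed into its adjacent surface vertices via the gluing data, and after passing if necessary to a compatible finite covering of the underlying bipartite graph so that every non-peripheral cyclic vertex has valence two, each irreducible factor is realised as a single surface piece whose peripheral structure comes from $\vv$. The resulting pair $(\Delta',\vv')$ is of weak surface type, and since $\pi(\uu)$ is unchanged the new pair remains maximal.

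The hardest step will be the first assertion, and specifically the production of the ``efficient'' cover of the rigid piece: this is the genuine input from \cite{wilton_one-ended_2011} and depends on the fact that rigidity affords extra flexibility in covering constructions which is unavailable for (for example) surface pieces. A secondary challenge is verifying that the local improvement at $v$ propagates to a global improvement, which requires choosing compatible covers of the incident cyclic vertices and matching multiplicities across the bipartite structure while preserving both local irreducibility (via Lemma \ref{lem: Locally irreducible d-immersions}) and admissibility.
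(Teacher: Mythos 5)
Your high-level strategy matches the paper's: unfold $(\Delta,\vv)$ into irreducible pieces, apply the relative JSJ to each, use rigidity to derive a contradiction with maximality via Lemma \ref{lem: Rigid implies non-maximal}, and then use the JSJ's surface-and-cyclic structure to assemble a maximal pair of weak surface type via Lemma \ref{lem: Maximal surface type}. However, your description of the crucial technical mechanism is wrong in a way that breaks the argument. You claim that ``a rigid pair admits a finite cover in which $-\chi$ grows strictly faster than the total peripheral length.'' No such cover can exist: a degree-$d$ cover of a finite graph multiplies $-\chi$ by exactly $d$, and if the multicycle lifts with total degree $d$ over each component then the peripheral length is also multiplied by exactly $d$, so the ratio $\rho_\curlyP$ is a covering invariant. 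Pulling back covers along the JSJ, however compatibly, therefore cannot change $\chi_-/n$ and yields no contradiction with maximality.

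What Theorem \ref{thm: Local theorem} (from \cite{wilton_one-ended_2011}) actually provides is different and is the real content of Lemma \ref{lem: Rigid implies non-maximal}: after passing to a suitable finite cover, one may \emph{delete} a boundary circle from a rigid piece and the resulting pair is still irreducible. The improvement in $\rho_\curlyP$ comes from this deletion, not from the cover -- deleting an incident edge space keeps $\chi_-$ fixed while lowering $n$. Concretely, for a rigid vertex $u$ of valence $\nu(u)$ one takes $k/(\nu(u)-1)$ copies of each of the $\nu(u)$ pairs obtained by omitting one incident edge space, where $k = \prod_u(\nu(u)-1)$, so that every edge space appears exactly $k$ times and the pieces reassemble into a graph of spaces. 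Rigidity is used precisely so that every one of these deleted-boundary pieces remains irreducible (so the relative Shenitzer lemma applies and the result is again an irreducible pair). For surface or cyclic pieces this deletion destroys irreducibility, which is why only rigid vertices yield the contradiction. Your proposal never performs the deletion, so the ``local improvement at $v$'' you invoke does not occur. The second half of your proposal is closer to what Lemma \ref{lem: Maximal surface type} does, though it is not true that $\pi(\uu)$ is unchanged -- the construction multiplies it -- only the ratio $\chi_-/n$ is preserved, which is what maximality requires.
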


The proof is based on ideas from \cite{wilton_one-ended_2011};  the main technical result of that paper is as follows \cite[Theorem 8]{wilton_one-ended_2011}.

\begin{theorem}\label{thm: Local theorem}
If $(\Gamma,\w)$ is a rigid graph pair then there is a finite-sheeted cover $(\wh{\Gamma},\hat{\w})\to (\Gamma,\w)$ such that, whenever a finite-sheeted cover $(\Gamma',\w')\to(\Gamma,\w)$ factors through $(\wh{\Gamma},\hat{\w})$, the pair $(\Gamma,\w'\smallsetminus w'_i)$ is irreducible for any component $w'_i$ of $\w'$.
\end{theorem}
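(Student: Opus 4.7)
The plan is to argue by contradiction. Suppose no such cover exists, so that for every finite-sheeted cover $(\Gamma',\w')\to(\Gamma,\w)$ there is a component $w'_i$ of $\w'$ for which $(\Gamma',\w'\smallsetminus w'_i)$ is reducible. I will use this hypothesis to produce a non-trivial cyclic splitting of $(\pi_1\Gamma,\w)$, contradicting rigidity. First I would note that irreducibility passes to finite-sheeted covers, since irreducibility of $(\Gamma,\w)$ is equivalent to one-endedness of the double $D(\w)$ and one-endedness is inherited by finite-index subgroups. Hence $(\Gamma',\w')$ itself is irreducible, and reducibility of $(\Gamma',\w'\smallsetminus w'_i)$ (via Lemmas~\ref{lem: Whitehead's lemma} and \ref{lem: Converse to Whitehead}) yields a non-trivial free splitting of $\pi_1\Gamma'$ relative to $\w'\smallsetminus w'_i$ in which the element $w'_i$ is necessarily hyperbolic on the Bass--Serre tree, for otherwise $(\Gamma',\w')$ would itself be reducible.

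The next step is to promote this free splitting to a non-trivial \emph{cyclic} splitting of $(\pi_1\Gamma',\w')$. Realising the free splitting topologically as a graph-of-spaces structure on $\Gamma'$ with trivial edge spaces, I would put $w'_i$ in minimal position transverse to the edge spaces and insert cylindrical vertex spaces along the $\pi_1\Gamma'$-translates of the axis of $w'_i$; this produces a graph-of-spaces decomposition of $\Gamma'$ with edge spaces that are arcs mapping into $\w'$, and hence a cyclic decomposition of $(\pi_1\Gamma',\w')$ whose edge groups are conjugate to powers of $w'_i$. Hyperbolicity of $w'_i$ in the original splitting guarantees this cyclic decomposition is non-trivial.

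The final, delicate step is to descend this cyclic decomposition from the finite-index subgroup $\pi_1\Gamma'$ to $\pi_1\Gamma$, producing a cyclic splitting of the pair $(\pi_1\Gamma,\w)$ and contradicting rigidity. Here I would work over a cofinal tower of normal finite covers. For each cover, I record the combinatorial type of the bad reduction: the piece in $\curlyP$ (Definition~\ref{defn: P}) at the offending Whitehead vertex, the image in $\Gamma$ of the removed component $w'_i$, and the induced cyclic splitting. Since $\curlyP$ and the set of components of $\w$ are finite, a pigeonhole / diagonal argument extracts a cofinal subtower along which all these data are constant. A Bass--Jiang-type descent (a cyclic splitting of a finite-index subgroup yields a cyclic splitting of the full group up to commensurability of edge groups) then produces a non-trivial cyclic splitting of $\pi_1\Gamma$ with edge groups commensurable to $\langle w'_i\rangle$ -- cyclic by construction -- and relative to all of $\w$ since each $w_j$ is already elliptic in every cover.

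The main obstacle is controlling this final descent: a priori the cyclic splittings in the tower could conspire to descend only to a trivial splitting of $(\pi_1\Gamma,\w)$, or to a splitting whose relative structure does not respect the full multicycle $\w$. Ruling this out requires separability (LERF) of free groups, in the spirit of Stallings' folding arguments: one uses LERF to refine the tower so that each potential obstruction to descent -- in particular, any edge group commensurator that might collapse the splitting -- becomes visible in some finite cover, and then the pigeonhole step can be carried out coherently.
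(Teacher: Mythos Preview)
This theorem is not proved in the present paper: it is quoted verbatim as \cite[Theorem 8]{wilton_one-ended_2011}, and the sentence ``The proof is based on ideas from \cite{wilton_one-ended_2011}'' refers to the proof of Theorem~\ref{thm: Maximal pairs and surfaces}, not to Theorem~\ref{thm: Local theorem} itself. So there is no in-paper argument to compare against; your proposal must stand or fall on its own.

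On its own, the proposal has a genuine gap at exactly the point you flag. First, a minor logical slip: the negation of the statement is not that \emph{every} finite cover has a bad component, but only that a cofinal family of covers does (for each candidate $\wh\Gamma$ there is some further $\Gamma'$ that is bad). This is enough to run a tower argument, so it is not fatal, but it should be stated correctly. The serious problem is the descent. The assertion that ``a cyclic splitting of a finite-index subgroup yields a cyclic splitting of the full group up to commensurability of edge groups'' is simply not a theorem: finite-index subgroups of groups routinely admit splittings that do not descend, and nothing in Bass--Jiang or related structure theory gives you this for free. Your pigeonhole step does not repair this, because the data you propose to record (a piece in $\curlyP$, the image component of $\w$) lives in $\Gamma$, whereas the splittings you produce live in the covers $\Gamma'$, which are getting larger; knowing that the \emph{type} of bad vertex stabilises says nothing about whether the associated trees converge to, or are induced from, a $\pi_1\Gamma$-tree. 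Invoking LERF at the end does not address this either: separability lets you realise subgroups in finite covers, but it does not manufacture a $\pi_1\Gamma$-action out of a compatible family of $\pi_1\Gamma'$-actions.

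In short, steps one through three are reasonable (and the promotion from a relative free splitting to a cyclic splitting with edge group a power of $w'_i$ is standard), but the argument as written does not establish the theorem, because the descent from the tower to $\pi_1\Gamma$ is asserted rather than proved, and there is no general principle that supplies it. The proof in \cite{wilton_one-ended_2011} proceeds quite differently, via a direct complexity argument on (generalised) Whitehead graphs rather than a limiting/contradiction scheme; if you want to pursue your line you would need a genuine accessibility or limiting-tree argument (e.g.\ in the spirit of Bestvina--Feighn or Rips theory) to extract a $\pi_1\Gamma$-tree from the sequence, and then check that the limiting action is non-trivial, cyclic, and relative to $\w$.
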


We will also need a relative analogue of Shenitzer's lemma --  see, for instance, \cite[Corollary 1.1]{touikan_one-endedness_2015} -- which we state here in the terminology of this paper.

\begin{lemma}[Relative Shenitzer's lemma]\label{lem: Relative Shenitzer}
Consider a decomposition $\curlyG$ of a group pair $(F,\w)$.  If the induced pair at every vertex of $\curlyG$ is irreducible then the pair $(F,\w)$ is irreducible.
\end{lemma}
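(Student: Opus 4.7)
The plan is to prove the contrapositive: if $(F, \w)$ is reducible, then the induced pair at some vertex of $\curlyG$ is also reducible. By Bass--Serre theory, reducibility of $(F, \w)$ is equivalent to the existence of a minimal, non-trivial action of $F$ on a simplicial tree $T'$ with trivial edge stabilizers, in which each $w_i$ is elliptic (this action encodes any of the possible failures in Definition \ref{defn: Irreducible pair}: a cyclic free factor, $k \geq 2$, or a non-trivial free part). The strategy is then to compare this action with the Bass--Serre tree $T$ of the decomposition $\curlyG$, and to exhibit a vertex $v$ at which the induced pair inherits a non-trivial free splitting relative to $\curlyA_{\tilde v} \sqcup \curlyB_{\tilde v}$.

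The argument splits into two cases according to the restricted action of each vertex stabilizer $G_{\tilde v}$ on $T'$. First, suppose every $G_{\tilde v}$ has a global fixed vertex in $T'$. Then one can build an $F$-equivariant simplicial map $T \to T'$ by choosing such fixed vertices equivariantly over a fundamental domain. For any edge $\tilde e$ of $T$ sent to a non-degenerate path in $T'$, the stabilizer $G_{\tilde e}$ would fix that path pointwise, which forces $G_{\tilde e} = 1$ because edge stabilizers of $T'$ are trivial. Since the $F$-action on $T'$ is non-trivial, some such non-collapsed edges must exist; these supply a non-trivial free splitting within $\curlyG$ itself, and at one of its endpoints the vertex group inherits a non-trivial free splitting relative to its peripheral structure, contradicting the hypothesis.

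In the remaining case, some $G_{\tilde v}$ acts without a fixed vertex on $T'$. Its minimal invariant subtree then witnesses a non-trivial free splitting of $G_{\tilde v}$, in which every element of $\curlyA_{\tilde v}$ is elliptic (inherited from ellipticity in $T'$). To conclude reducibility of the induced pair at $\tilde v$, one must also certify that every incident edge group in $\curlyB_{\tilde v}$ is elliptic in this induced splitting. For this I would pass to a common refinement of $T$ and $T'$ in the Guirardel deformation space of $F$-trees with trivial edge stabilizers, obtained by blowing up each vertex of $T$ by the action of its stabilizer on $T'$. In the refinement the edge stabilizers of $T$ are visibly elliptic, producing the desired free splitting of $G_{\tilde v}$ relative to $\curlyA_{\tilde v} \sqcup \curlyB_{\tilde v}$.

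The main obstacle is precisely this second case: an edge group $G_{\tilde e}$ of $\curlyG$ could a priori act hyperbolically on $T'$, so one cannot naively restrict the splitting from $T'$ to $G_{\tilde v}$ relative to $\curlyB_{\tilde v}$. The triviality of edge stabilizers in $T'$ is exactly what makes the common refinement work — it ensures that blow-ups are compatible and that hyperbolic behavior of $G_{\tilde e}$ would itself propagate a free splitting back into $\curlyG$. This argument is a direct relative analogue of the classical Shenitzer lemma, and is essentially the content of \cite[Corollary 1.1]{touikan_one-endedness_2015}, from which the statement can alternatively be quoted.
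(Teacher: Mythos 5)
The paper does not prove this lemma: it is quoted directly from Touikan \cite[Corollary 1.1]{touikan_one-endedness_2015}, so the fallback you offer in your final sentence is in fact exactly what the paper does, and is the safe route.

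Your sketched direct argument has a genuine gap in the second case. When some edge stabilizer $G_{\tilde e}$ of $T$ acts hyperbolically on $T'$, the two trees admit \emph{no} common refinement at all: a refinement $\hat T$ equipped with collapse maps to both $T$ and $T'$ would have to carry an edge orbit whose stabilizer is exactly $G_{\tilde e}$ (collapse maps do not change the stabilizers of surviving edges), while simultaneously having every edge stabilizer elliptic in $T'$; since $G_{\tilde e}$ is hyperbolic in $T'$, no such $\hat T$ exists. The triviality of the edge stabilizers of $T'$ constrains the stabilizers that $\hat T$ may \emph{add}, but it puts no constraint on the edge stabilizers of $T$ itself, so the claim that this ``is exactly what makes the common refinement work'' does not hold. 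This hyperbolic--elliptic incompatibility between $\curlyB_{\tilde v}$ and $T'$ is precisely what makes the relative Shenitzer lemma non-trivial, and Touikan resolves it by an accessibility/unfolding argument, not by a blowup in a common deformation space. (Your Case 1 reaches the right conclusion, but the precise mechanism is slightly off: a non-collapsed edge of $T$ has trivial stabilizer, which means the adjacent induced vertex pair acquires a trivial peripheral subgroup and therefore its double splits freely; it is not that the vertex group ``inherits a non-trivial free splitting.'')
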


We now assemble the lemmas that we will need to prove Theorem \ref{thm: Maximal pairs and surfaces}.  The first shows how to use a rigid vertex to increase irreducible rank.  Its proof is illustrated in Figure \ref{fig: Rigid vertex}.

\begin{figure}[h]
\begin{center}
 \centering \def\svgwidth{400pt}
 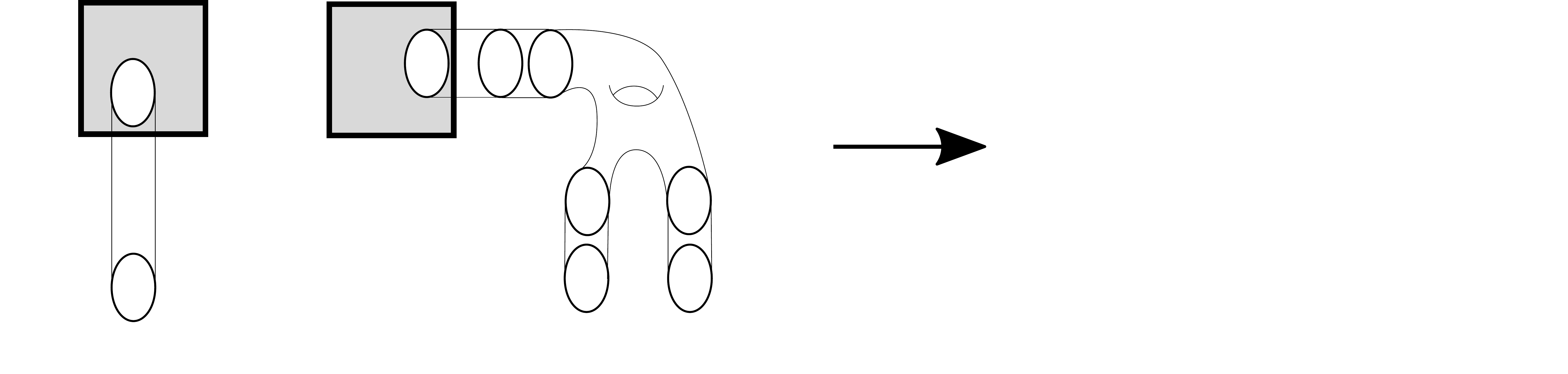
 \caption{The graph of spaces $X$ has one rigid vertex, one surface vertex, one non-peripheral cyclic vertex and three peripheral cyclic vertices.  By taking multiple copies of $X$ and deleting complementary components of the rigid vertices, we construct a new graph of spaces $X'$ with greater projective $\curlyP$-rank.}
  \label{fig: Rigid vertex}
\end{center}
\end{figure}

\begin{lemma}\label{lem: Rigid implies non-maximal}
Let $(\Gamma,\w)$ be an irreducible graph pair.  If the relative JSJ of $(\Gamma,\w)$  has a rigid vertex then there is a locally irreducible pair $(\Gamma',\w')$ and an admissible, essential map $(\Gamma',\w')\to(\Gamma,\w)$ with
\[
\frac{\chi_-(\Gamma')}{n(\w')}>\chi_-(\Gamma)~.
\]
\end{lemma}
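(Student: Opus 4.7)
The plan is to use the Local Theorem (Theorem \ref{thm: Local theorem}) at the rigid vertex, then assemble many copies of the resulting cover of $R$ so that the peripheral ``loss'' distributes evenly across all components of $\w$. Concretely, let $R$ be a rigid vertex of the JSJ of $(\Gamma,\w)$ with induced rigid pair $(R,\w_R)$. Write $\w_R = \{u_1,\dots,u_s\}$, where each $u_j$ is either a JSJ edge of $R$ (a boundary circle $\gamma_j\subset R$) or a component of $\w$ conjugate into $\pi_1 R$. Apply the Local Theorem to $(R,\w_R)$ and then pull back to a finite-index normal subgroup of $\pi_1 R$ containing each $u_i$ (available by residual finiteness of free groups). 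The result is a cover $(\wh R,\hat\w_R)\to(R,\w_R)$ of some degree $N$ in which every component of $\hat\w_R$ has degree $1$ over its image, and for which removing \emph{any} single component of $\hat\w_R$ still yields an irreducible pair.

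I would then build $(\Gamma',\w')$ as follows. Take $s$ disjoint copies $\wh R^{(1)},\dots,\wh R^{(s)}$, and in the $i$-th copy delete one degree-$1$ lift of $u_i$ from the peripheral structure. For every remaining lift of a boundary circle $\gamma_j\subset R$ in any copy, attach a fresh copy of the complementary piece of $\Gamma\setminus R$ glued to $\gamma_j$ in $\Gamma$. The multicycle $\w'$ consists of all surviving lifts of $\w^R$-components inside the $\wh R^{(i)}$ together with the copies of $\w^\Sigma$-components carried by the attached pieces. The choice to remove one lift of $u_i$ in copy $i$ is the balancing mechanism: if $u_i\in\w^R$ the removal drops a $w$-preimage inside $\wh R^{(i)}$, while if $u_i$ is a boundary circle the removal drops exactly one complement attachment at that lift; in either situation every component of $\w$ ends up with precisely $sN-1$ preimages in $\w'$, so the map $(\Gamma',\w')\to(\Gamma,\w)$ is admissible.

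Irreducibility of $(\Gamma',\w')$ follows componentwise by Relative Shenitzer (Lemma \ref{lem: Relative Shenitzer}) applied to the natural decomposition obtained by collapsing the cyclic JSJ vertices: at each $\wh R^{(i)}$ the induced pair is $(\wh R^{(i)},\hat\w_R^{(i)}\setminus\hat u_i)$, which is irreducible by the Local Theorem, and at the other (surface or rigid) vertices the induced pair is the same as in $\Gamma$. Each connected component of $\Gamma'$ embeds as a $\pi_1$-injective subspace of the pullback cover of $\Gamma$ along $\wh R\to R$, which provides essentiality of the map. Finally, Lemma \ref{lem: Converse to Whitehead} unfolds $(\Gamma',\w')$ to a locally irreducible pair via a homotopy equivalence, preserving both $\chi$ and $n$.

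A direct count using the unramified choice of cover gives
\[
\chi(\Gamma')\;=\;sN\,\chi(\Gamma)-\chi(\Sigma),
\]
where $\Sigma=\Gamma\setminus R$, so
\[
\frac{\chi_-(\Gamma')}{n(\w')}\;=\;\frac{-sN\,\chi(\Gamma)+\chi(\Sigma)}{sN-1}.
\]
This strictly exceeds $\chi_-(\Gamma)=-\chi(\Gamma)$ if and only if $\chi(R)<0$, and the latter is automatic because a rigid vertex has non-cyclic free fundamental group and hence $\chi(R)\leq -1$. The main obstacle in executing this plan is the combinatorial bookkeeping of the gluing: when a single component of $\Gamma\setminus R$ meets $R$ along several JSJ edges, the ``attach a copy at each non-removed lift'' prescription must be refined (for instance by using a fibre-product construction over each such component) so that the peripheral preimage count, the Euler characteristic formula, and essentiality of the final graph-of-spaces map all continue to hold as stated.
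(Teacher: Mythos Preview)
Your overall strategy---invoke Theorem \ref{thm: Local theorem} at a rigid vertex, delete peripheral elements, and reassemble so that $\chi_-/n$ strictly increases---is the paper's strategy as well. The gap is precisely the one you flag at the end. When a component $C$ of the complement of $R$ in the JSJ graph of spaces meets $R$ along more than one edge (equivalently, when the underlying JSJ graph has a cycle through $R$), the instruction ``attach a fresh copy of $C$ at each remaining lift'' is not well-posed: a single copy of $C$ occupies several lifts simultaneously, and after you delete one lift of $u_i$ from $\wh R^{(i)}$ the lift-counts over the different $u_j$ adjacent to $C$ no longer match inside that copy. Allowing copies of $C$ to bridge different $\wh R^{(i)}$ can restore the edge count, but then your essentiality argument (embedding each component of $\Gamma'$ in a single pullback cover of $\Gamma$) breaks down, and you would have to verify $\pi_1$-injectivity directly via Proposition \ref{prop: Essential maps of graphs of groups}. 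The fibre-product patch you suggest, carried out properly, amounts to first passing to a finite-sheeted cover of all of $\Gamma$, which is exactly how the paper begins.

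The paper avoids the bookkeeping entirely by assembling $X'$ from individual JSJ vertex spaces rather than from complementary pieces. It first replaces $(\Gamma,\w)$ by a finite-sheeted cover so that \emph{every} rigid vertex already satisfies the conclusion of Theorem \ref{thm: Local theorem}; this is harmless since a finite cover is itself an admissible essential map. Then, with $k=\prod_u(\nu(u)-1)$ running over all rigid vertices, it takes $k$ copies of each non-rigid vertex space and, for each rigid $u$ and each incident edge $e_{i'}$, takes $k/(\nu(u)-1)$ copies of the pair $(X_u,\{X_{e_i}\mid i\neq i'\})$. Every JSJ edge then appears exactly $k$ times on each side, so the pieces glue up freely with no compatibility constraint linking different edges of the same vertex---the multi-edge difficulty simply does not arise. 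Admissibility is immediate (every peripheral cyclic vertex has $k$ copies, so $n(\w')=k$), and the computation $\chi_-(\Gamma')/k=\sum_u\frac{\nu(u)}{\nu(u)-1}\chi_-(X_u)+\sum_v\chi_-(X_v)>\chi_-(\Gamma)$ is then a one-line sum over vertices.
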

\begin{proof}
Consider the relative JSJ decomposition $\curlyG$ of the pair $(\Gamma,\w)$.  By Theorem \ref{thm: Local theorem} and Marshall Hall's theorem \cite{hall_jr_subgroups_1949}, after replacing $(\Gamma,\w)$ with a finite-sheeted cover we may assume that every rigid vertex $\curlyG_u$ has the property guaranteed by Theorem \ref{thm: Local theorem}.  Note that, if $\nu(v)$ denotes the valence of the vertex $v$, then every rigid vertex $u$ has $\nu(u)>1$.

To construct $\Gamma'$, we realize the relative JSJ of $(\Gamma,\w)$ as a graph of spaces $X$. We may take the vertex spaces of $X$ to be graphs and the edge spaces to be circles, although this is not important for the subsequent argument.    We now construct a new graph of spaces $X'$ and an essential map $X'\to X$.

Let $k=\prod_u(\nu(u)-1)$, where the product is taken over all rigid vertices of $X$.  For each non-rigid vertex $X_v$, we take $k$ copies of the induced pair at $v$.  Consider a  rigid vertex $u$, with incident edges $\{e_i\}$.  For each edge $e_{i'}$ incident at $u$, we take $k/(\nu(u)-1)$ copies of  the pair $(X_u,\{X_{e_i}\mid i\neq i'\})$.  Note that every edge space of $X$ appears exactly $k$ times in this collection of pairs. We may therefore glue the resulting collection of pairs up to form a graph of spaces $X'$.

By construction, $X'$ is naturally equipped with a map $X'\to X$, which is $\pi_1$-injective by Proposition \ref{prop: Essential maps of graphs of groups}.  Furthermore, $X'$ is naturally equipped with exactly $k$ copies of component of $\w$; we call this map of circles $ \w_0$.   Let $\Gamma_0$ be the disjoint unions of the cores (in the sense of \cite{stallings_topology_1983}) of the covers of $\Gamma$ corresponding to the components of $X_0$.   We may realize $\w_0$ as a collection of cycles in $\Gamma_0$.  Then $(\Gamma_0,\w_0)\to(\Gamma,\w)$ is an admissible, essential map.   The pair $(\Gamma_0,\w_0)$ is irreducible by Lemma \ref{lem: Relative Shenitzer}, and hence by Lemma \ref{lem: Converse to Whitehead}, can be unfolded to a locally irreducible pair $(\Gamma',\w')\to(\Gamma,\w)$.

Finally, we compute Euler characteristics.  We have $n(\w')=k$, while
\[
\frac{\chi_-(\Gamma')}{k}=\sum_{u}\frac{\nu(u)}{(\nu(u)-1)}\chi_-(X_u) +\sum_v \chi_-(X_v)>\chi_-(\Gamma)~,
\]
(where $u$ ranges over all the rigid vertices of $X$ and $v$ ranges over all the non-rigid vertices).  This completes the proof.
\end{proof}
  
A similar argument shows that there are always maximal pairs of surface type.

\begin{lemma}\label{lem: Maximal surface type}
Let $(\Gamma,\w)$ be an irreducible graph pair.  If the relative JSJ of $(\Gamma,\w)$ has no rigid vertices then there is a locally irreducible group pair $(\Gamma',\w')$ of surface type and an admissible, essential $(\Gamma',\w')\to(\Gamma,\w)$ with
\[
\frac{\chi_-(\Gamma')}{n(\w')}=\chi_-(\Gamma)~.
\]
\end{lemma}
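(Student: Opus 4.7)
The plan is to adapt the construction of Lemma \ref{lem: Rigid implies non-maximal} to the no-rigid-vertex setting, assembling the desired pair directly from copies of the surface vertex pieces and resolving each cyclic vertex into boundary and interior curves of a surface. Realize the relative JSJ of $(\Gamma,\w)$ as a graph of spaces $X$: by hypothesis every non-cyclic vertex is of surface type, so each such vertex space is a compact surface $\Sigma_v$ with boundary components corresponding to the incident JSJ edges, and each cyclic vertex is a circle into which some peripheral $w_i$'s map. Applying Marshall Hall's theorem as in Lemma \ref{lem: Rigid implies non-maximal}, first pass to a finite-sheeted cover of $(\Gamma,\w)$ after which every attaching map of a surface boundary or peripheral curve into a cyclic vertex is a degree-one homeomorphism; this step does not affect the ratio $\chi_-/n$ to be computed.

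Next, take $k$ disjoint copies of the covered $X$, with $k$ a positive integer chosen so that $k(\nu(c)-\pi(c))$ is non-negative and even at every cyclic vertex $c$, where $\nu(c)$ denotes the JSJ valence of $c$ and $\pi(c)$ the number of peripheral $w_i$'s in $c$. At each $c$ we thus have $k\nu(c)$ surface boundary ends and $k\pi(c)$ peripheral copies; resolve $c$ into a collection of new cyclic vertices by choosing a pairing: match each peripheral copy with a distinct surface boundary end (forming a boundary component of the resulting surface) and pair the remaining $k(\nu(c)-\pi(c))$ surface boundary ends among themselves into interior curves. Insist throughout that every pair draws its two sheets from distinct original sources, which is always possible by irreducibility of $(\Gamma,\w)$; this guarantees $\partial$-essentiality on each induced cyclic-vertex pair.

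The resulting space $\Gamma'$ is a disjoint union of compact surfaces with boundary parametrized by the $k$ copies of $\w$, so $(\pi_1\Gamma',\w')$ is of (weak) surface type. By Proposition \ref{prop: Essential maps of graphs of groups} the natural map $(\Gamma',\w')\to(\Gamma,\w)$ is $\pi_1$-injective, and it is admissible with $n(\w')=k$. Since cyclic vertex spaces contribute zero to Euler characteristic, $\chi_-(\Gamma')=k\chi_-(\Gamma)$, giving the equality $\chi_-(\Gamma')/n(\w')=\chi_-(\Gamma)$. Applying Lemma \ref{lem: Converse to Whitehead} yields a homotopy-equivalent, locally irreducible representative without affecting these counts.

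The main obstacle will be verifying that the pairing at each cyclic vertex can be chosen consistently to yield a surface-type (rather than merely weak-surface-type) result when $(\Gamma,\w)$ is assumed connected. Corner cases where $\nu(c)<\pi(c)$ or the parities clash can be handled either by choosing $k$ large enough, or by allowing pairs of peripherals to form annular components; the combinatorial slack provided by irreducibility of $(\Gamma,\w)$ is enough for the subsequent application in Theorem \ref{thm: Maximal pairs and surfaces}.
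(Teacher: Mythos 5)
Your construction is similar in spirit to the paper's (realize the JSJ as a graph of spaces, pass to a cover via Marshall Hall so the attaching maps are isomorphisms, take copies, glue at cyclic vertices, and observe that the cyclic pieces contribute nothing to $\chi_-$), but the crucial combinatorial step is not established. The case $\nu(c)<\pi(c)$ is a genuine possibility --- take $F$ a once-punctured-torus group with $\w=\{[a,b],[a,b]^2\}$, so the lone cyclic vertex has $\nu(c)=1$ and $\pi(c)=2$ --- and your primary prescription, ``choose $k$ large enough,'' cannot help, since $k(\nu(c)-\pi(c))$ remains negative for all $k>0$. Your fallback of pairing excess peripherals into annular components is the right idea, but it is exactly where the real work lies: one must verify $\partial$-essentiality at each such annulus (its two boundary circles must map to distinct $\pi_1\Gamma$-orbits of $\w$-lifts, which is not automatic if the two peripherals are copies of the same $w_i$), and ``the combinatorial slack provided by irreducibility'' is not an argument. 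Likewise, the assertion that every interior pairing ``can draw its two sheets from distinct original sources'' requires a matching argument: at a cyclic vertex with, say, $\nu(c)=2$ and $\pi(c)=1$, a careless assignment of the $k$ peripheral copies to boundary ends leaves all $k$ remaining ends on a single JSJ edge, in which case no same-source-avoiding perfect matching exists. The matching \emph{can} be arranged by balancing the boundary assignment across incident edges, but this must be said.

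By contrast, the paper's proof sidesteps these local balancing issues by a two-stage construction: first, it uses a fixed number of copies (two of each surface vertex, $\nu(w)$ of each non-peripheral cyclic vertex, $\nu(w)-1$ extra copies of each peripheral cyclic vertex) to produce a surface $\Sigma$ together with a collection $\w'$ of curves mapping to $\partial\Sigma$ but possibly covering some boundary components with multiplicity; it then uses Marshall Hall again, a further multiple of copies, and annuli to distribute the $\w'$ bijectively onto $\partial\Sigma$. This second stage is precisely the part your sketch defers. If you want to make your one-shot pairing work, you should: (i) treat $\nu(c)\geq\pi(c)$ and $\nu(c)<\pi(c)$ separately, with explicit annuli in the latter case and an explicit check that the two peripherals attached to any annulus lie in distinct orbits; (ii) exhibit a balanced assignment of peripheral copies to boundary ends at each cyclic vertex, so that the residual ends admit a same-source-avoiding perfect matching; and (iii) then invoke Proposition \ref{prop: Essential maps of graphs of groups} and Lemma \ref{lem: Converse to Whitehead} as you do.
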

\begin{proof}
Consider the JSJ decomposition $\curlyG$ of the pair $(\Gamma,\w)$.  By Marshall Hall's theorem, we may assume that the attaching maps at cyclic vertices are all isomorphisms.  

We realize $\mathcal{G}$ as a graph of spaces $X$ in the natural way, taking each surface vertex to be a compact surface and each cyclic vertex to be a circle, and define a new graph of spaces $X'$ as follows.    We take $2$ copies of each surface vertex $X_v$. We take $\nu(w)$ copies of each non-peripheral cyclic vertex space $X_w$.  We take $2$ copies of each peripheral cyclic vertex space $X_w$ of $X$; these will each be a peripheral vertex of $X'$.   Finally, we take $\nu(w)-1$ further copies of each peripheral vertex space $X_w$; these will be non-peripheral vertices of $X'$.  It is now easy to see that we can assemble these to form $X'$ so that every non-peripheral cyclic vertex group has exactly two incident edges and all the attaching maps are isomorphisms.  As before, the natural map $X'\to X$ is $\pi_1$-injective by Proposition \ref{prop: Essential maps of graphs of groups}.

Every non-peripheral cyclic vertex $w$ is adjacent to exactly two surface vertices, and identified with two boundary components of these.  We may therefore contract the two edges adjacent to $w$ to obtain a larger surface vertex.  

Thus, the resulting graph of spaces $X'$ is homeomorphic to a surface $\Sigma$. The peripheral cyclic vertices equip $X'$ with exactly two copies of each component of $\w$; we call this collection of cyclic subgroups $\w'$.   We note that $n(\w')=2$ and that, since only surface vertices contribute to Euler characteristic, $\chi(\Sigma)=2\chi(F)$.  Therefore, if we replace 
$(\Sigma,\w')$ by a locally irreducible graph pair $(\Gamma',\w')$ as in the previous lemma, 
$\chi_-(\Gamma')/n(\w')=\chi_-(\Gamma)$.

At this stage, we have a locally irreducible, admissible pair $(\Gamma',\w')$ of surface type, satisfying the required constraints on Euler characteristic, so that each component of $\w'$ is conjugate into some component of $\partial\Sigma$ (and every component of $\partial\Sigma$ contains a component of $\w'$).  To make this pair of surface type, we need $\w'$ to be identified bijectively with $\partial\Sigma$.  To ensure this, we first invoke Marshall Hall's theorem again, replacing $\Gamma'$ with a finite-sheeted cover and $\w'$ with its pullback, so that each component of $\w'$ maps isomorphically to the component of $\partial\Sigma$ that contains it.  For each component $\partial_i\Sigma\subseteq\partial\Sigma$, let $n_i$ be the number of components of $\w'$ contained in $\partial_i\Sigma$.  Replacing $\Sigma$ with two copies of itself, we may assume that each $n_i$ is even. Without loss of generality, we may also assume that $n_1$ is minimal among the $n_i$. We now take $n_1$ copies of $\Sigma$, and equip each boundary component with exactly one component of $\w'$. We may then add annuli to $\Sigma$ to pair up the remaining components of $\w'$.  This completes the proof.
\end{proof}

We can now apply these two lemmas to prove that we can always find a maximal $\partial$-immersion of surface type.

\begin{proof}[Proof of Theorem \ref{thm: Maximal pairs and surfaces}]
 
By Lemma \ref{lem: Locally irreducible d-immersions}, $(\Delta,\vv)$ is weakly irreducible, so can be unfolded to a pair (without loss of generality, $(\Lambda,\uu)$) which is wedge of locally irreducible graph pairs $(\Lambda_j,\uu_j)$. That is, there is a finite set $\Xi$ equipped with maps $\xi_\pm:\Xi\to \coprod_j V(\Lambda_j)$, so that
\[
 \Lambda=\coprod_j \Lambda_j/\sim
\]
where $\xi_+(x)\sim \xi_-(x)$ for all $x\in\Xi$, and $\uu=\coprod_j\uu_j$.  For each $x\in\Xi$, let us fix choices of lifts of the maps $\xi_\pm$ to maps $\eta_\pm:\Xi\to \ess_{\uu}$.

Suppose that, for some $j$ (without loss of generality, $j=1$), the relative JSJ decomposition of the group pair $(\pi_1\Lambda_1,\uu_1)$ has a rigid vertex.   Lemma \ref{lem: Rigid implies non-maximal} applied to $(\Lambda_1,\uu_1)$  yields an essential map from an irreducible pair $(\Lambda'_1,\uu'_1)\to(\Lambda_1,\uu_1)$ that satisfies $\chi_-(\Lambda'_1)>d\chi_-(\Lambda_1)$, where $d$ is the degree of the covering map $\ess_{\uu'_1}\to\ess_{\uu_1}$.   By Lemma \ref{lem: Converse to Whitehead}, after unfolding, we may take the pair $(\Lambda'_1,\uu'_1)$  to be locally irreducible.  Let $(\Lambda'_j,\uu'_j)$ consist of $d$ copies of $(\Lambda_j,\uu_j)$, for each $j\neq 1$.  Let $(\widehat{\Lambda},\hat{\uu})=\coprod_j(\Lambda'_j,\uu'_j)$.  Let $\Xi'=\Xi\times\{1,\ldots,d\}$, and choose a map $\eta'_\pm:\Xi'\to\ess_{\hat{\uu}}$ so that, for each $x\in\Xi$, $\eta'_\pm(x,\cdot)$ indexes the $d$ preimages of $\eta_\pm(x)$ in $\ess_{\hat{\uu}}$.  Let $\xi'_\pm=\hat{\uu}\circ\eta'_\pm$.

The maps $\xi'_\pm$ now define a wedge $\Lambda'=\widehat{\Lambda}/\sim$, where $\xi'_+(x')\sim \xi'_-(x')$ for all $x'\in\Xi'$,  which folds (preserving Euler characteristic) to an immersion $\Delta'\to\Delta$.  Let $\uu'$ be the composition of $\hat{\uu}$ with the natural quotient map $\widehat{\Lambda}\to\Lambda'$, and let $\vv'$ the composition of $\uu'$ with the quotient map $\Lambda'\to\Delta'$.  We therefore have an admissible $\partial$-immersion
\[
(\Lambda',\uu')\to(\Delta',\vv')\immerses(\Delta,\vv)
\]
such that $(\Lambda',\uu')$ is locally irreducible with $\chi(\Delta')=\chi(\Lambda')$ and $n(\vv')=n(\uu')=dn(\vv)$. 

We now compare Euler characteristics:
\[
\chi_-(\Delta)=\sum_j\chi_-(\Lambda_j)+|\Xi|
\]
whereas
\[
\chi_-(\Delta')=\chi_-(\Lambda'_1)+\sum_{j\neq 1}\chi_-(\Lambda'_j)+|\Xi'|>d\chi_-(\Lambda_1)+d\sum_{j\neq 1}\chi_-(\Lambda'_j)+d|\Xi|~,
\]
so $\chi_-(\Delta')>d\chi_-(\Delta)$.    The $\partial$-immersion
\[
(\Lambda',\uu')\to (\Delta',\vv')\immerses(\Gamma,\w)
\]
therefore has greater projective $\curlyP$-rank, contradicting the maximality hypothesis.

The second part of the theorem follows in the same way, using Lemma \ref{lem: Maximal surface type} instead of Lemma \ref{lem: Rigid implies non-maximal}.
\end{proof}

Our main technical theorem follows immediately.

\begin{theorem}\label{thm: Essential surfaces exist}
Let $(\Gamma,\w)$ be an irreducible graph pair. There exists a compact surface with boundary $\Sigma$ and an admissible, essential map $(\Sigma,\partial\Sigma)\to(\Gamma,\w)$.
\end{theorem}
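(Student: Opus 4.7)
The plan is to deduce the theorem essentially immediately from the two preceding results, Lemma~\ref{lem: Maximal pairs exist} and Theorem~\ref{thm: Maximal pairs and surfaces}. Since $(\Gamma,\w)$ is irreducible, Lemma~\ref{lem: Maximal pairs exist} produces at least one maximal, admissible $\partial$-immersion from a locally irreducible pair, and the second assertion of Theorem~\ref{thm: Maximal pairs and surfaces} then upgrades this to a maximal, admissible $\partial$-immersion
\[
(\Lambda,\uu)\to(\Delta,\vv)\immerses(\Gamma,\w)
\]
in which the intermediate pair $(\Delta,\vv)$ is of weak surface type.

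Next I would convert $(\Delta,\vv)$ into a genuine surface pair. By the definition of weak surface type, the group pair $\pi_1(\Delta,\vv)$ decomposes as a free product of surface-type factors, and each factor is by definition the fundamental group pair of some compact surface with boundary $(\Sigma_i,\partial\Sigma_i)$ (with the conjugacy classes of the boundary curves matching the peripheral structure). Setting $\Sigma=\coprod_i\Sigma_i$, a compact surface with boundary (possibly disconnected), and realizing the chosen isomorphism of group pairs geometrically, one obtains a map of pairs $(\Sigma,\partial\Sigma)\to(\Delta,\vv)$ which is $\pi_1$-injective and induces a bijection on the components of the associated peripheral covers.

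Finally, the composition $(\Sigma,\partial\Sigma)\to(\Delta,\vv)\immerses(\Gamma,\w)$ is the desired map. It is $\pi_1$-injective since each stage is, and it is $\partial$-essential by Lemma~\ref{lem: d-essential}, since the composition factors through an immersion $(\Delta,\vv)\immerses(\Gamma,\w)$ via a map that is bijective on the components of the peripheral cover. Admissibility of the new composition follows from admissibility of the original $\partial$-immersion $(\Lambda,\uu)\to(\Gamma,\w)$, together with the bijections $\ess_\uu\to\ess_\vv$ and $\partial\Sigma\to\ess_\vv$ on components, so every point of $\ess_\w$ has exactly $n(\uu)$ preimages in $\partial\Sigma$. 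All the substantive work has already been done in Theorem~\ref{thm: Maximal pairs and surfaces}; the only task here is the routine bookkeeping of passing from a weak-surface-type graph pair to an actual compact surface with boundary, so there is no real obstacle left.
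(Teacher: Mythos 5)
Your proposal is correct and follows essentially the same route as the paper: invoke Lemma \ref{lem: Maximal pairs exist} to get a maximal admissible $\partial$-immersion, upgrade to weak surface type via Theorem \ref{thm: Maximal pairs and surfaces}, and then pass to the free factors (disjoint union of the $\Sigma_i$) to realize a genuine compact surface mapping into $(\Gamma,\w)$. The paper leaves the final surface-realization and the admissibility/essentiality verifications implicit (relying on Culler's fatgraph observation quoted earlier in the section), whereas you spell them out; that extra detail is correct, if slightly informal in its appeal to Lemma \ref{lem: d-essential}, which is stated for graph pairs rather than surface pairs.
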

\begin{proof}
By Lemma \ref{lem: Maximal pairs exist}, a maximal, admissible $\partial$-immersion
\[
(\Lambda,\uu)\to(\Delta,\vv)\immerses(\Gamma,\w)
\]
exists.  By Theorem \ref{thm: Maximal pairs and surfaces}, there is such a maximal,  admissible $\partial$-immersion so that  $(\Delta,\vv)$ is of weak surface type. After passing to the disjoint union of the free factors, we obtain a maximal, admissible $\partial$-immersion so that $(\Delta,\vv)$ is of surface type.  
\end{proof}

\section{Surface subgroups and hierarchies}\label{sec: Hierarchies}

In this section we deduce the claimed consequences of Theorem \ref{thm: Essential surfaces exist}.  We start with graphs of virtually free groups with virtually cyclic edge groups.  The deduction of the existence of surface subgroups from a result like Theorem \ref{thm: Essential surfaces exist} is well known (cf.\ \cite{calegari_surface_2008} or \cite{wilton_one-ended_2011}, for instance); we include an argument here for completeness.

\begin{theorem}\label{thm: Surface subgroups}
Let $\Gamma$ be the fundamental group of a graph of virtually free groups with virtually cyclic edge groups. If $\Gamma$ is hyperbolic and one-ended then $\Gamma$ contains a quasiconvex surface subgroup.
\end{theorem}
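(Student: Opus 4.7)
The plan is to derive this from Theorem \ref{thm: Essential surfaces exist} by a standard gluing construction, after first reducing from the virtual to the non-virtual setting.

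First, I would invoke the result of Wise referenced in the introduction to pass to a finite-index subgroup $\Gamma_0\leq\Gamma$ that is itself the fundamental group of a graph of free groups with cyclic edge groups. Since a (quasiconvex) surface subgroup of $\Gamma_0$ is a (quasiconvex) surface subgroup of $\Gamma$, we may assume $\Gamma=\pi_1 X$ where $X$ is a graph of spaces whose vertex spaces $X_v$ are finite graphs and whose edge spaces are circles, giving graph pairs $(X_v,\w_v)$ at each vertex. One-endedness of $\Gamma$ rules out finite edge groups, and Lemma \ref{lem: Relative Shenitzer} applied contrapositively shows that each vertex pair $(X_v,\w_v)$ must be irreducible: otherwise the splitting of that vertex pair could be combined with the ambient graph-of-groups decomposition to produce a splitting of $\Gamma$ over a finite subgroup, contradicting one-endedness.

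Next, for each vertex $v$, Theorem \ref{thm: Essential surfaces exist} produces a compact surface with boundary $\Sigma_v$ and an admissible, essential map $(\Sigma_v,\partial\Sigma_v)\to(X_v,\w_v)$; let $n_v$ be the admissibility constant. Replacing $\Sigma_v$ by a disjoint union of $N/n_v$ copies for a common multiple $N$, I may arrange that every $n_v=N$. Then for each (unoriented) edge $e$ of $X$, the circle edge space $S^1_e$ has exactly $N$ preimages in each of $\partial\Sigma_{\iota(e)}$ and $\partial\Sigma_{\tau(e)}$; I pair these off and glue along $S^1_e$ to assemble the $\Sigma_v$ into a graph of spaces $\Sigma$ with a map to $X$. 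Since boundary components are identified in pairs, $\Sigma$ is a closed surface. Each vertex map is essential by construction, so Proposition \ref{prop: Essential maps of graphs of groups} gives that the induced map $\pi_1\Sigma\to\Gamma$ is injective; because each $\Sigma_v$ has negative Euler characteristic (it is not an annulus or disc, as its image is an irreducible pair), $\pi_1\Sigma$ is a genuine surface subgroup.

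Finally, for quasiconvexity, each $\pi_1\Sigma_v$ is a finitely generated subgroup of the free group $\pi_1 X_v$, hence quasiconvex in its vertex group, and the ambient graph-of-groups decomposition of $\Gamma$ is acylindrical since $\Gamma$ is hyperbolic with virtually cyclic edge groups. A Bestvina--Feighn-type combination argument then promotes local quasiconvexity at the vertices to quasiconvexity of $\pi_1\Sigma$ in $\Gamma$. I expect the main obstacle to lie not in the existence step, which reduces to a direct application of Theorem \ref{thm: Essential surfaces exist} plus matching up admissibility constants, but rather in the two framework steps: carefully handling Wise's reduction from virtually free vertex groups and virtually cyclic edge groups to the free-with-cyclic case, and invoking the appropriate combination theorem to transfer vertex-wise quasiconvexity to the glued surface.
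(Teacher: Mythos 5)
Your overall strategy mirrors the paper's: reduce via Wise's theorem to a graph of free groups with cyclic edge groups, show the vertex pairs are irreducible, apply Theorem~\ref{thm: Essential surfaces exist} at each vertex to obtain admissible essential surface pairs, assemble them along the edges, and deduce injectivity of $\pi_1\Sigma\to\Gamma$ from Proposition~\ref{prop: Essential maps of graphs of groups}. The essential difference is that you attempt to produce a \emph{closed} surface by pairing off boundary circles and gluing them directly, whereas the paper first forms a graph of surfaces $\curlyH$ with $H=\pi_1\curlyH\hookrightarrow\Gamma$, applies Wise's theorem \emph{a second time} to pass to a finite-index subgroup of $H$ in which all attaching maps are homeomorphisms, thickens the resulting graph of spaces to a $3$-manifold $M$, and then extracts a surface subgroup from a boundary component of $M$ using the Euler characteristic computation $\chi(\partial M)=2\chi(M)\leq 0$ and Dehn's lemma (which requires one-endedness of $H$, via Shenitzer's lemma).

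There is a genuine gap in your direct-gluing step. Admissibility only controls the \emph{number of preimages of a point} of $\ess_{\w}$, not the number of boundary circles. Over an edge circle $S^1_e$, the $N$ preimages in $\partial\Sigma_{\iota(e)}$ may be distributed among fewer than $N$ boundary circles covering $S^1_e$ with various degrees $d_1,\dots,d_k$ (summing to $N$), and likewise on the $\tau(e)$ side, with in general a \emph{different} multiset of degrees. Two boundary circles can be glued over $S^1_e$ only if they cover $S^1_e$ with the same degree, so ``pair these off and glue'' does not make sense as written. This is precisely what the paper's second invocation of Wise's theorem fixes: after assembling $\curlyH$, pass to a finite-index subgroup where all attaching maps are homeomorphisms, so that every boundary circle has degree one and the count of preimages really does count circles. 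Once that step is in place, your direct pairing can be made to work (since each glued edge then involves exactly two vertex surfaces with $N$ degree-one circles on each side), giving an alternative to the paper's $3$-manifold plus Dehn's lemma route. You should add that reduction. Two smaller points: your parenthetical claim that each $\Sigma_v$ is ``not an annulus or disc, as its image is an irreducible pair'' is not a valid inference (annulus components are not ruled out by Theorem~\ref{thm: Essential surfaces exist}); what you actually need, and what does hold, is $\chi(\Sigma_v)\leq 0$, from which $\chi(\Sigma)\leq 0$ and hence some component of $\Sigma$ has nonpositive Euler characteristic (and in fact negative, by hyperbolicity of $\Gamma$). Finally, for quasiconvexity the paper simply cites local quasiconvexity of $\Gamma$ from Bigdely--Wise rather than running a combination-theorem argument, which is cleaner, though your suggestion is a reasonable alternative.
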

\begin{proof}
By \cite{wise_subgroup_2000}, $\Gamma$ is residually finite and so virtually torsion-free. We may therefore assume that $\Gamma$ is the fundamental group of a graph of free groups with cyclic edge groups.  We call the vertices $v$ of the underlying graph of $\Gamma$ \emph{non-cyclic}, and subdivide each edge, putting a \emph{cyclic} vertex in the middle with vertex-group $\Z$.

Consider the induced pair $(F_v,\w_v)$ for a non-cyclic vertex $v$.  Since $\Gamma$ is one-ended, $(F_v,\w_v)$ is irreducible.   By Theorem \ref{thm: Essential surfaces exist}, we  can replace each $(F_v,\w_v)$ by an admissible, essential map of a surface pair $(\pi_1\Sigma_v,\partial\Sigma_v)$. By gluing these to the adjacent cyclic vertices, we define a new graph of free groups with cyclic edge groups $\curlyH$, with every non-cyclic vertex of surface type.  Note that $\chi(\Sigma_v)\leq 0$ for all $v$.

The fundamental group $H$ of $\curlyH$ is equipped with a natural map $f:H\to \Gamma$, and by Proposition \ref{prop: Essential maps of graphs of groups},  $f$ is injective.   In particular, $H$ contains no Baumslag--Solitar subgroups, since $\Gamma$ is hyperbolic.   

The graph of groups $\curlyH$ is a graph of surfaces glued along their boundary components to circles, which we realize in the natural way as a graph of spaces $X$.  Note that $\chi(X)=\sum_v \chi(\Sigma_v)\leq 0$.  By \cite{wise_subgroup_2000}, after replacing $H$ by a subgroup of finite index, we may assume that the attaching maps are all homeomorphisms.  It is then easy to see that $X$ can be thickened to a 3-manifold $M$ with boundary.  Since closed 3-manifolds have zero Euler characterisitic,
\[
\chi(\partial M)=2\chi(M)=2\chi(X)\leq 0
\]
and so we may choose a component $\Sigma$ of $\partial M$ with $\chi(\Sigma)\leq 0$.  Inclusion induces a natural map $\pi_1\Sigma\to H$.  But $H$ is one-ended by Shenitzer's lemma, and Dehn's lemma then implies that the map $\pi_1\Sigma\to H$ (and hence the composition $\pi_1\Sigma\to \Gamma$) is injective.

Finally, $\Gamma$ is locally quasiconvex \cite[Theorem D]{bigdely_quasiconvexity_2013}, and hence the surface subgroup $\pi_1\Sigma$ is quasiconvex.  This completes the proof.
\end{proof}

A group $\Gamma$ is called \emph{rigid} if it does not split over a (possibly finite) virtually cyclic subgroup.  Given a group $\Gamma$, a \emph{virtually cyclic hierarchy} for $\Gamma$ is a set of subgroups of $\Gamma$ obtained by passing to the vertex groups of a splitting of $\Gamma$ over virtually cyclic edge groups, and then repeating this operation on those subgroups recursively.  If a finite virtually cyclic hierarchy exists, terminating in (possibly finite) rigid subgroups, then we shall say that $\Gamma$ \emph{has a finite hierarchy}.   Graphs of virtually free groups with virtually cyclic edge groups play a special role in the subgroup theory of groups that have finite hierarchies.

\begin{remark}
Let $G$ be a one-ended group with a finite hierarchy, and let $H$ be a one-ended subgroup in the hierarchy of $G$ with no one-ended subgroups below it.  Then either $H$ is rigid or $H$ is a graph of virtually free groups over virtually cyclic edge groups.
\end{remark}

In  \cite{sela_diophantine_2001}, Sela showed that limit groups have a finite hierarchy.  He also showed that a limit group without a $\mathbb{Z}^2$ subgroup is hyperbolic, and that non-abelian limit groups are never rigid. We thus obtain:

\begin{corollary}\label{cor: Surface subgroups of limit groups}
Every one-ended limit group contains a surface subgroup.
\end{corollary}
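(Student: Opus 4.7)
The plan is to reduce to Theorem \ref{thm: Surface subgroups} using the finite hierarchy structure of limit groups. Let $\Gamma$ be a one-ended limit group. First I would dispose of the non-hyperbolic case: if $\Gamma$ contains a $\Z^2$ subgroup, then $\Z^2$ is itself a surface subgroup (the fundamental group of the closed torus, which has Euler characteristic zero), so we are done immediately. Hence we may assume $\Gamma$ contains no $\Z^2$ subgroup, and so by Sela's theorem $\Gamma$ is hyperbolic.

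Next I would exploit the finite hierarchy. Since $\Gamma$ admits a finite virtually cyclic hierarchy and is itself one-ended, there is a \emph{lowest} one-ended subgroup $H$ in the hierarchy, meaning a one-ended subgroup which arises at some stage of the hierarchy but has no one-ended subgroups strictly below it. Because $H$ is obtained by iteratively passing to vertex groups of splittings over virtually cyclic subgroups of a hyperbolic group, $H$ is itself hyperbolic (vertex groups of splittings of hyperbolic groups over virtually cyclic, hence quasiconvex, subgroups are quasiconvex and therefore hyperbolic). Also, $H$ is a limit group, since limit groups are closed under taking subgroups. Being one-ended, $H$ is non-abelian, hence by Sela's result not rigid. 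The remark preceding the corollary therefore forces $H$ to be a graph of virtually free groups with virtually cyclic edge groups.

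At this point Theorem \ref{thm: Surface subgroups} applies directly to $H$: since $H$ is hyperbolic, one-ended, and a graph of virtually free groups with virtually cyclic edge groups, it contains a (quasiconvex) surface subgroup, which in turn is a surface subgroup of $\Gamma$.

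The only subtlety that I expect requires care is the verification that a lowest one-ended subgroup $H$ exists and has the advertised form. The existence relies on finiteness of the hierarchy, and the structural conclusion relies on the remark. One must also check that $H$ inherits hyperbolicity along the hierarchy, which is a standard fact about quasiconvex splittings of hyperbolic groups but is the one non-formal ingredient of the argument. Everything else is a direct application of earlier results (Theorem \ref{thm: Surface subgroups}) together with the cited theorems of Sela.
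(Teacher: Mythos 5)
Your proposal is correct and follows essentially the same route as the paper: dispose of the $\Z^2$ case (noting that $\Z^2\cong\pi_1(T^2)$ is already a surface subgroup under the paper's definition), pass to a lowest one-ended subgroup $H$ in Sela's finite hierarchy, observe $H$ is a non-abelian limit group and hence not rigid, invoke the Remark to conclude $H$ is a graph of virtually free groups with virtually cyclic edge groups, and apply Theorem~\ref{thm: Surface subgroups}.

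The one place where your argument takes a slightly different (and more fragile) route is the hyperbolicity of $H$. You assert that vertex groups of a splitting of a hyperbolic group over a virtually cyclic subgroup are automatically quasiconvex; while this is true, it is a nontrivial fact and not obviously what the paper is appealing to. A cleaner justification, entirely internal to the cited results of Sela, is: $H$ is a finitely generated subgroup of the limit group $\Gamma$, hence itself a limit group; since $\Gamma$ contains no $\Z^2$, neither does $H$; so $H$ is hyperbolic by Sela's criterion. This avoids importing any quasiconvexity-of-vertex-groups lemma and matches the paper's one-line appeal to \cite{sela_diophantine_2001}. With that substitution the argument is exactly the intended one.
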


Louder and Touikan showed that a hyperbolic group without 2-torsion has a finite hierarchy \cite{louder_strong_2017}.  (The restriction on 2-torsion is technical, and conjecturally can be removed.)  We thus obtain the following contribution towards the complete resolution of Gromov's question.

\begin{corollary}\label{cor: Surface or rigid}
Every one-ended hyperbolic group without 2-torsion either contains a surface subgroup or contains a quasiconvex, infinite, rigid subgroup.
\end{corollary}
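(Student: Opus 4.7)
The plan is to combine the Louder--Touikan finite virtually cyclic hierarchy theorem with Theorem~\ref{thm: Surface subgroups}, descending the hierarchy until we reach a minimal one-ended subgroup, and then applying the dichotomy in the remark preceding Corollary~\ref{cor: Surface subgroups of limit groups}.

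First, I would invoke Louder--Touikan \cite{louder_strong_2017} to produce a finite virtually cyclic hierarchy for $\Gamma$. Since $\Gamma$ is one-ended and the hierarchy is finite, we may choose a subgroup $H$ in the hierarchy that is itself one-ended but such that none of the vertex groups in the splitting beginning its own subhierarchy is one-ended. By the remark quoted just before Corollary~\ref{cor: Surface subgroups of limit groups}, $H$ is either rigid or is the fundamental group of a graph of virtually free groups with virtually cyclic edge groups. Moreover, $H$ is quasiconvex in $\Gamma$: in a hyperbolic group, the vertex groups of a splitting over quasiconvex subgroups are quasiconvex, and virtually cyclic subgroups of a hyperbolic group are automatically quasiconvex, so quasiconvexity propagates by induction down each step of the finite hierarchy. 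In particular $H$ is itself a one-ended hyperbolic group.

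If $H$ is rigid then, being one-ended, it is infinite, and it is the desired quasiconvex rigid subgroup of $\Gamma$. Otherwise, $H$ is a one-ended hyperbolic graph of virtually free groups with virtually cyclic edge groups, so Theorem~\ref{thm: Surface subgroups} produces a quasiconvex surface subgroup $S\leq H$; composing the quasiconvex embeddings $S\hookrightarrow H\hookrightarrow\Gamma$ shows that $S$ is quasiconvex in $\Gamma$, as required.

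The principal obstacle is to justify the quasiconvexity assertions along the hierarchy; once this is established, the argument is a direct traversal. The no-2-torsion hypothesis enters exactly through the appeal to Louder--Touikan, and no further technical work is needed beyond the three inputs: Theorem~\ref{thm: Surface subgroups}, the dichotomy remark on minimal one-ended pieces, and the finiteness of the hierarchy.
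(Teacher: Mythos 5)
Your proposal is correct and follows exactly the route the paper takes: invoke Louder--Touikan for a finite virtually cyclic hierarchy, descend to a minimal one-ended piece $H$, apply the dichotomy of the preceding remark, and in the non-rigid case appeal to Theorem~\ref{thm: Surface subgroups}. Your added justification of quasiconvexity (vertex groups of splittings over quasiconvex edge groups are quasiconvex, virtually cyclic subgroups of hyperbolic groups are quasiconvex, and quasiconvexity is transitive) is a detail the paper leaves implicit but is needed and correct.
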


In particular, Gromov's question is reduced to the rigid case (modulo the technical issue of 2-torsion).

\section{Applications to profinite rigidity}\label{sec: Profinite rigidity}

In this section we discuss applications to Question \ref{qu: Remeslennikov} and related problems.  As explained in \cite[Theorem 4.17]{bridson_determining_2016}, Corollary \ref{cor: Surface subgroups of limit groups} resolves the question for limit groups.

We include the proof for completeness.  The key point is that it follows that the profinite completions of non-free limit groups have non-zero virtual second cohomology.  We work with continuous cohomology, with coefficients in $\Z/2$.

\begin{theorem}\label{thm: Profinite vb2 for limit groups}
If $L$ is a limit group and not free then there is a subgroup $L_0$ of finite index in $L$ such that $H^2(\wh{L}_0;\Z/2)\neq 0$.
\end{theorem}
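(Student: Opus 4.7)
The plan is to combine Corollary \ref{cor: Surface subgroups of limit groups} with the virtual retraction property for finitely generated subgroups of limit groups established in \cite{wilton_halls_2008} and the goodness (in Serre's sense) of closed surface groups. First I would reduce to the one-ended case. Since $L$ is non-free, its Grushko decomposition $L = L_1 * \cdots * L_n * F$ must contain a freely indecomposable, non-cyclic factor, which we may take to be $L_1$. As a finitely generated subgroup of a limit group, $L_1$ is itself a limit group, and being freely indecomposable and non-cyclic it is one-ended; moreover, $L_1$ is a retract of $L$ via the canonical projection $\pi\colon L \twoheadrightarrow L_1$.

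By Corollary \ref{cor: Surface subgroups of limit groups}, $L_1$ contains a surface subgroup $S\cong\pi_1\Sigma$, where $\Sigma$ is a closed surface of non-positive Euler characteristic. The virtual retraction theorem of \cite{wilton_halls_2008} then supplies a finite-index subgroup $L_1^0\leq L_1$ containing $S$ together with a retraction $\rho\colon L_1^0\twoheadrightarrow S$. I would set $L_0 := \pi^{-1}(L_1^0)\leq L$; this is of finite index in $L$ (since $L/L_0\cong L_1/L_1^0$), and the composition
\[
L_0 \xrightarrow{\pi|_{L_0}} L_1^0 \xrightarrow{\rho} S
\]
is a retraction of $L_0$ onto $S$, because $S\subseteq L_1^0\subseteq L_0$ and both $\pi$ and $\rho$ restrict to the identity on $S$.

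Profinite completion is functorial and preserves retractions, so this induces a split epimorphism $\wh{L}_0\twoheadrightarrow\wh{S}$ and therefore a split injection $H^2(\wh{S};\Z/2)\hookrightarrow H^2(\wh{L}_0;\Z/2)$ in continuous cohomology. It remains to show that $H^2(\wh{S};\Z/2)\neq 0$. Here I would invoke the goodness of closed surface groups (including the torus case $S\cong\Z^2$, which is needed to cover non-hyperbolic $L$): this yields $H^2(\wh{S};\Z/2)\cong H^2(S;\Z/2)\cong\Z/2$, the latter identification coming from Poincar\'e duality in the orientable case and the mod-$2$ fundamental class in the non-orientable case. Consequently $H^2(\wh{L}_0;\Z/2)\neq 0$, as required.

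The main conceptual subtlety is arranging a finite-index subgroup \emph{of $L$} (rather than merely of $L_1$) that retracts onto $S$; the preimage construction $L_0=\pi^{-1}(L_1^0)$ handles this cleanly because free factors of $L$ are retracts. All remaining ingredients -- retracts inducing summands in cohomology, goodness of surface groups -- are standard, so the substantive content of the theorem is carried entirely by Corollary \ref{cor: Surface subgroups of limit groups} together with the Hall-type theorem of \cite{wilton_halls_2008}.
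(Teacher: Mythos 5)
Your proof follows essentially the same route as the paper: find a surface subgroup via Corollary \ref{cor: Surface subgroups of limit groups}, use the virtual retraction property from \cite{wilton_halls_2008} together with goodness of surface groups, and conclude by the splitting of cohomology induced by a retraction. The one place you are actually more careful than the paper is the explicit reduction to a freely indecomposable factor $L_1$ via Grushko: the paper applies Corollary \ref{cor: Surface subgroups of limit groups} directly to $L$ even though that corollary is stated only for one-ended limit groups, and a non-free limit group need not be one-ended, so this reduction step is genuinely needed and you supply it cleanly. (Your detour through $L_1$ is not strictly necessary for the retraction part, since \cite{wilton_halls_2008} gives a virtual retraction from $L$ itself onto the finitely generated subgroup $S$, but it is harmless and the preimage construction works.)
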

\begin{proof}
By Corollary \ref{cor: Surface subgroups of limit groups}, $L$ contains a subgroup $S$ isomorphic to the fundamental group of a closed surface of non-positive Euler characteristic; in particular, $H^2(S)\neq 0$ (with coefficients in $\Z/2$).   Since surface groups are \emph{good} in the sense of Serre \cite{grunewald_cohomological_2008}, it follows that the continuous cohomology $H^2(\wh{S})$ is also non-zero.  By \cite{wilton_halls_2008}, $S$ is a virtual retract of $L$, so there is a finite-index subgroup $L_0$ containing $S$ and a retraction $r:L_0\to S$.  Let $i:S\to L_0$ be the inclusion map, so $r\circ i=\id_S$. Both $r$ and $i$ extend by continuity to maps $\hat{r}:\wh{L}_0\to\wh{S}$ and $\hat{i}:\wh{S}\to\wh{L}_0$, and $\hat{r}\circ\hat{i}=\id_{\wh{S}}$.  Therefore the induced maps on cohomology satisfy $\hat{i}^*\circ\hat{r}^*=\id_{H^*(\wh{S})}$.  In particular, since $H^2(\wh{S})$ is non-zero, $H^2(\wh{L}_0)$ is also non-zero, as claimed.
\end{proof}

Since every open subgroup of a profinite free group is profinite free, and hence has zero second cohomology, Corollary \ref{introcor: Remeslennikov for limit groups} follows immediately.  Corollary \ref{introcor: PP} also follows quickly from Theorem \ref{thm: Profinite vb2 for limit groups}

\begin{proof}[Proof of Corollary \ref{introcor: PP}]
To prove the contrapositive, we assume that $\w$ is not primitive in $F$.   Recall that the \emph{double} $D(\w)$ is the fundamental group of the graph of groups with two vertices labelled by $F$, and one edge between them for each component of $\w$.  The double $D(\w)$ is a limit group and hence, by Theorem \ref{thm: Profinite vb2 for limit groups}, has a subgroup $D_0$ of finite index with $H^2(\wh{D}_0;\Z/2)\neq 0$.  If $\w$ were primitive in $\wh{F}$ then $\wh{D(\w)}$ would be free profinite, hence so would $\wh{D}_0$, and therefore $H^2(\wh{D}_0;\Z/2)$ would be zero, a contradiction.
\end{proof}

\bibliographystyle{plain}

\Addresses

\end{document}